\newtheorem{thm}{Theorem}[section]
\newtheorem{cor}[thm]{Corollary}
\newtheorem{lem}[thm]{Lemma}
\newtheorem{prop}[thm]{Proposition}
\theoremstyle{definition}
\newtheorem{defn}[thm]{Definition}
\newtheorem{question}{Question}
\theoremstyle{remark}
\newtheorem{rem}[thm]{Remark}
\newtheorem{exam}[thm]{Example}
\numberwithin{equation}{section}
\newcommand{\spin}{\ifmmode{\rm Spin}\else{${\rm spin}$\ }\fi}
\newcommand{\spinc}{\ifmmode{{\rm Spin}^c}\else{${\rm spin}^c$}\fi}
\newcommand{\Z}{\mathbb{Z}}
\newcommand{\Q}{\mathbb{Q}}
\newcommand{\C}{\mathbb{C}}
\newcommand{\R}{\mathbb{R}}
\newcommand{\M}{\mathcal{M}}
\newcommand{\RowMatrix}[1]{
\begin{matrix}
#1 & \dotsb & #1
\end{matrix}
}
\DeclareMathOperator{\im}{im}
\DeclareMathOperator{\tor}{tor}
\DeclareMathOperator{\Hom}{Hom}
\DeclareMathOperator{\lcm}{lcm}
\DeclareMathOperator{\inter}{int}
\DeclareMathOperator{\lk}{lk}
\newcommand{\note}[1]{\textbf{{\color{red} #1 }}}
\begin{document}

\title{Doubly slice Montesinos links}%

\author{Duncan McCoy}%
\address {Universit\'{e} du Qu\'{e}bec \`{a} Montr\'{e}al}
\email{mc\_coy.duncan@uqam.ca}

\author{Clayton McDonald}%
\address {Boston College}
\email{mcdonafi@bc.edu}
\date{}%

\begin{abstract}
This paper compares notions of double sliceness for links. The main result is to show that a large family of 2-component Montesinos links are not strongly doubly slice despite being weakly doubly slice and having doubly slice components. Our principal obstruction to strong double slicing comes by considering branched double covers. To this end we prove a result classifying Seifert fibered spaces which admit a smooth embeddings into integer homology $S^1 \times S^3$s by maps inducing surjections on the first homology group. A number of other results and examples pertaining to doubly slice links are also given.
\end{abstract}

\maketitle
% ----------------------------------------------------------------
%\tableofcontents
\section{Introduction}
A knot $K \subseteq S^3$ is said to be {\em doubly slice} if it arises as the intersection of an unknotted 2-sphere in $S^4$ and the equatorial $S^3$.\footnote{Here and throughout the paper we work exclusively in the smooth category. That is, all manifolds and embeddings are assumed to be smooth.} A 2-sphere in $S^4$ is unknotted if it bounds an embedded ball. The notion of double slicing for knots was introduced by Fox \cite{Fox62} and has been studied using a whole host of different techniques  (see, amongst others, \cite{Sumner71, Ruberman83, Friedl04, Donald2015Embedding, Meier15}). However there are several natural ways this notion can be extended to links. Following \cite{McDonald2019doubly}, a 2-component link $L$ is said to be {\em strongly doubly slice} if it arises as the intersection of an unlink of 2-spheres in $S^4$ with the equatorial $S^3$ and that $L$ is {\em weakly doubly slice} if it arises as the intersection of an unknotted 2-sphere with the equatorial $S^4$. Note that a weak double slicing on $L$ induces a quasi-orientation on $L$, so it is natural to consider being weakly doubly slice as a property of a link $L$ with a quasi-orientation. The main result of this paper is to show that a large family of 2-components Montesinos links are weakly doubly slice with both quasi-orientations and have doubly slice components, but are not strongly doubly slice.
\begin{restatable}{thm}{thmMain}\label{thm:montesinos}
Let $L$ be the Montesinos link
\[
L=\M \left(0; \frac{p_1}{q_1}, \dots,\frac{p_k}{q_k}, -\frac{p_k}{q_k}, \dots, -\frac{p_1}{q_1}  \right),
\]
where at most one of the $p_i$ is even and for all $i$ we have  $|p_i/q_i|\geq 2$. Then $L$ is a 2-component link which is weakly doubly slice with both quasi-orientations and both components are doubly slice knots. However, if there are $i, j$ such that $\gcd(p_i, p_j)>1$ and $\frac{p_i}{q_i} \neq \pm \frac{p_j}{q_j}$, then $L$ is not strongly doubly slice.
\end{restatable}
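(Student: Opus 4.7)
The plan is to separate the positive assertions (two components, weakly doubly slice with both quasi-orientations, components doubly slice) from the non-strong-double-slice obstruction, handling the former by a symmetry construction and the latter via the branched double cover together with the embedding classification announced in the abstract. For the component count, the standard parity rule for the number of components of a Montesinos link together with the hypothesis that at most one $p_i$ is even and the palindromic pairing of tangles gives $|L|=2$ after a short case check.

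For the positive double-sliceness statements, I would exploit the fact that the palindromic arrangement of tangles realizes an orientation-preserving involution $\tau$ of $S^3$ preserving $L$ and having a 2-sphere fixed-point set, obtained by inserting each rational tangle $p_i/q_i$ symmetrically opposite its mirror $-p_i/q_i$. Doubling $S^3$ across this 2-sphere presents $S^4$, and $L$ bounds a $\tau$-invariant pair of slice disks that glue to an unknotted 2-sphere, which is the desired weak double slicing. A parallel construction carried out individually for each component of $L$ produces the doubly slice structures on the components, while the two quasi-orientation cases reduce to a direct analysis of how $\tau$ acts on component orientations. The condition $|p_i/q_i|\geq 2$ enters here to rule out degenerate tangles.

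For the obstruction, a strong double slicing of $L$ lifts under branched double covering to a smooth embedding $\Sigma_2(L)\hookrightarrow S^1\times S^3$ with $H_1$-surjective induced map, since the branched double cover of $S^4$ along an unknotted 2-sphere unlink of two components is $S^1\times S^3$. As $\Sigma_2(L)$ is the Seifert fibered space with palindromic invariants $(p_1/q_1,\ldots,p_k/q_k,-p_k/q_k,\ldots,-p_1/q_1)$, the classification theorem stated in the abstract constrains the admissible embeddings. The main obstacle, and the arithmetic heart of the proof, is to deduce from this classification that any such embedding forces the slopes to pair uniquely as $\pm$-mirror pairs; the hypothesis $\gcd(p_i,p_j)>1$ with $p_i/q_i\neq \pm p_j/q_j$ is precisely what destroys this uniqueness and thereby obstructs the embedding, completing the argument.
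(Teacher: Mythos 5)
Your obstruction half is essentially the paper's argument and is sound: a strong double slicing lifts, via the branched double cover of $S^4$ over the two-component unlink of spheres (which is $S^1\times S^3$), to a smooth embedding $\Sigma(L)\hookrightarrow S^1\times S^3$ inducing a surjection on $H_1$; since $\Sigma(L)\cong S^2\left(0;\frac{p_1}{q_1},-\frac{p_1}{q_1},\dots,\frac{p_k}{q_k},-\frac{p_k}{q_k}\right)$, Theorem~\ref{thm:embedding}\eqref{it:explicit_Description} together with uniqueness of Seifert invariants forces $\left\{\frac{p_i}{q_i},-\frac{p_i}{q_i}\right\}=\left\{\frac{p_j}{q_j},-\frac{p_j}{q_j}\right\}$ whenever $\gcd(p_i,p_j)>1$, i.e.\ $\frac{p_i}{q_i}=\pm\frac{p_j}{q_j}$. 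This is exactly how Section~\ref{sec:Montesinos} argues, treating Theorem~\ref{thm:embedding} as a black box; once the classification is granted there is no further ``arithmetic heart'' left to extract.

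The genuine gap is in your positive half. A small slip first: a smooth orientation-preserving involution of $S^3$ cannot fix a 2-sphere (orientation-preserving forces even-codimension fixed sets); you intend the orientation-reversing reflection exchanging $T$ with its mirror $\widetilde{T}$. The substantive problems are two. (a) Your description ``$L$ bounds a $\tau$-invariant pair of slice disks that glue to an unknotted 2-sphere'' cannot be the structure of a weak double slicing of a 2-component link: by Lemma~\ref{lem:linking_mono}, a sphere meeting the equatorial $S^3$ transversely in a 2-component link is cut into \emph{three} planar pieces (two disks on one side, an annulus on the other). Gluing a disk pair along each component separately would instead produce two spheres, i.e.\ a strong double slicing --- precisely what the theorem shows generally fails, so any argument that ``accidentally'' proves this must be wrong. (b) Even with the correct configuration, the unknottedness of the resulting sphere is the entire content and does not follow from symmetry: doubling the product disks $T\times I\subset B^3\times I$ always yields embedded spheres, but showing they are unknotted is Zeeman-level input. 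The paper supplies this differently: Proposition~\ref{prop:tangle_doubling} realizes the double of $T$ as the intersection of the \emph{original} unknotted sphere with a pushed-in copy of $S^3$ (the boundary of $B\times\left[0,\tfrac12\right]$), and Theorem~\ref{thm:doubling_tangle} feeds in Zeeman's theorem by closing up $T$, with \emph{any prescribed arc orientations}, into a doubly slice knot $K\#-K$; ranging over all arc orientations is exactly what yields both quasi-orientations, which your deferred ``direct analysis of how $\tau$ acts'' does not supply. Finally, the component count and the doubly slice components come from the parity analysis of rational-tangle endpoints (at most one $p_i$ even guarantees $T$ has two arcs and no closed components), whence $L$ is a 2-component tangle double with each component of the form $K\#-K$; your parity remark is in the right spirit, but the doubly sliceness of the components again rests on Zeeman, not on a ``parallel construction'' that would face the same unknottedness issue as in (b).
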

Our conventions on Montesinos links are laid out at the beginning of Section~\ref{sec:Montesinos}.
\begin{exam} The link $\M(0; 5,\frac52, -\frac52, -5)$, depicted in Figure~\ref{fig:monty_example}, is not strongly doubly slice, despite being weakly doubly slice with both quasi-orientations and having unknotted components.
\end{exam}
Although Theorem~\ref{thm:montesinos} is not the first known instance that examples of links which are weakly doubly slice but not strongly doubly slice have been produced, it provides the first examples where the link is weakly doubly slice with both quasi-orientations and has doubly slice components, but is still not strongly doubly slice. In \cite{McDonald2019doubly}, the second author exhibited links which are weakly doubly slice with one quasi-orientation, but not strongly doubly slice. However these examples are only known to be weakly slice with one quasi-orientation and could not be strongly doubly slice as their components were not doubly slice as knots.
\begin{figure}[!ht]
  \begin{overpic}[width=250pt]{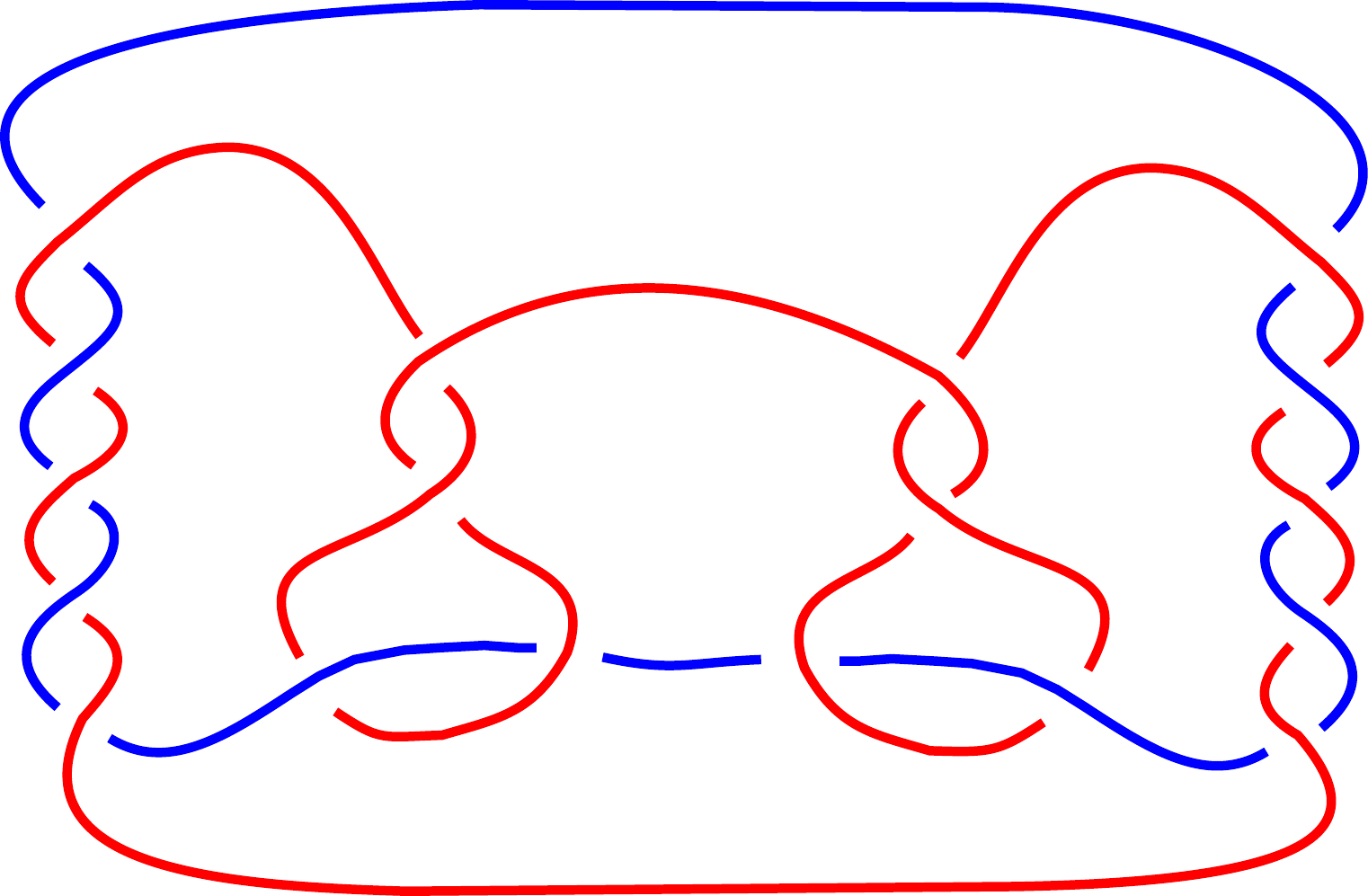}
  \end{overpic}
  \caption{The Montesinos link $\M(0; 5,\frac52, -\frac52, -5)$.}
  \label{fig:monty_example}
  \end{figure}
We note that any Montesinos link of the form 
\[
L=\M \left(0; \frac{p_1}{q_1}, \dots,\frac{p_k}{q_k}, -\frac{p_k}{q_k}, \dots, -\frac{p_1}{q_1}  \right)
\]
is isotopic to one where the $p_i/q_i$ satisfy $|p_i/q_i|\geq 2$. Thus the condition in Theorem~\ref{thm:montesinos} that the $|p_i/q_i|\geq 2$ for all $i$ is not actually a restriction, it is merely a convention that allows us to state our conclusions more concisely.

Furthermore using the work of Aceto-Kim-Park-Ray \cite{Acetoandco}, we show that for four stranded 2-component pretzel links the notions of slice and weakly doubly slice coincide. 
%\begin{restatable}{prop}{fourstrandclassification}\label{prop:4strand_classification}
%Let $L$ be a 4-stranded pretzel link with two components. Then $L$ is weakly doubly slice with both quasi-orientations if and only if it is slice.
%\end{restatable}

\begin{restatable}{thm}{fourstrand}\label{thm:4strand}
Let $L$ be a pretzel link with two components and four strands, then the following are equivalent:
\begin{enumerate}[(i)]
    \item $L$ takes the form $L=P(a,b,-b,-a)$ with at most one of $a,b$ even;
    \item $L$ is slice;
    \item $L$ is weakly doubly slice with at least one quasi-orientation; and
    \item $L$ is weakly doubly slice with both quasi-orientations.
\end{enumerate}
\end{restatable}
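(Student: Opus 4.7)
The plan is to establish the cyclic chain (i) $\Rightarrow$ (iv) $\Rightarrow$ (iii) $\Rightarrow$ (ii) $\Rightarrow$ (i). Three of these implications are short, and the substance lies in (ii) $\Rightarrow$ (i).

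The implication (iv) $\Rightarrow$ (iii) is tautological. For (iii) $\Rightarrow$ (ii), suppose $L$ arises as the intersection of an unknotted $2$-sphere $S \subseteq S^4$ with the equatorial $S^3$. The two components of $L$ cut $S$ into three pieces, namely an annulus and two disks, and the assignment of these pieces to the two sides of $S^3$ must alternate across $L$ by a local transversality calculation. Consequently the two disks lie on the same side of $S^3$ and realize $L$ as a slice link in the corresponding $4$-ball.

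For (i) $\Rightarrow$ (iv), I would observe that $L = P(a,b,-b,-a)$ is the Montesinos link $\M(0;a,b,-b,-a)$. Using the convention noted after Theorem~\ref{thm:montesinos} to rewrite this (if necessary) as a Montesinos link with all tangle parameters of absolute value at least $2$, Theorem~\ref{thm:montesinos} applies with $k=2$ and directly provides weak double slicings for both quasi-orientations.

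The main step is (ii) $\Rightarrow$ (i). For this I would invoke the classification of slice $4$-stranded pretzel links of Aceto-Kim-Park-Ray~\cite{Acetoandco}. Their obstructions, derived by applying Donaldson's diagonalization theorem together with Heegaard Floer $d$-invariants to the branched double cover (which for a $4$-stranded pretzel link is a small Seifert fibered space bounding a plumbed $4$-manifold), force a slice $4$-stranded $2$-component pretzel link to take the form $P(a,b,-b,-a)$ after a suitable reordering of its parameters. The parity restriction that at most one of $a,b$ is even then follows from the requirement that $L$ has exactly two components, together with the standard parity rules for the number of components of a pretzel link. The main obstacle I anticipate is organizational: the results in~\cite{Acetoandco} are stated for pretzel links with a variety of sign and ordering conventions (and often with an emphasis on the knot case), so care is needed to reduce to the case of $2$-component links, to rule out the possibility of slice examples whose parameters do not admit the required symmetric pairing, and to identify the $P(a,b,-b,-a)$ form uniquely up to the ambient symmetries of the pretzel diagram.
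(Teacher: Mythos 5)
Your proposal is correct and follows essentially the same route as the paper: the paper proves (iv)$\Rightarrow$(iii) trivially, (iii)$\Rightarrow$(ii) via Remark~\ref{rem:useful}\eqref{it:wk_dbl_slice} (whose content is exactly your annulus-plus-two-disks argument), (ii)$\Rightarrow$(i) by citing \cite{Acetoandco}, and (i)$\Rightarrow$(iv) via Proposition~\ref{prop:weak_slicing_Montesinos}, which is the tangle-doubling construction underlying the part of Theorem~\ref{thm:montesinos} you invoke. The only cosmetic difference is that the paper appeals to Proposition~\ref{prop:weak_slicing_Montesinos} directly, which requires no $|p_i/q_i|\geq 2$ normalization step.
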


\subsection{Double slicing with more than two components}
%If we extend this to three or more components, we notice that a weakly doubly slice link need not even be slice, as a sphere could be cut into the disjoint union of a disc and an annulus on either side.
So far we have only discussed double slicing for links with two components. For links with more components there are obviously more possibilities for double slicings based on the number unknotted, unlinked spheres used in the slicing.
%As a further subtlety, notice that if one has a weakly doubly slice link $L$, then choosing an orientation on the slicing sphere induces an orientation on $L$. Reversing the orientation on the sphere reverses the  orientation of every component of $L$. That is, a weak double slicing induces a quasi-orientation on $L$. This suggests that a generalization of double slicing of links ought to be a property of oriented links.

We propose the following general definition for double slicing of links.
\begin{defn}\label{def:dbly_slice_link}
Let $L=L_1 \sqcup \dots \sqcup L_r \subseteq S^3$ be a coloured oriented link with $r$ colours and a quasi-orientation\footnote{Here, a {\em quasi-orientation} on a coloured link is a choice of orientation up to overall reversal on each monochromatic sublink. Thus an $n$-component link coloured with $r$ colours has $2^{n-r}$ possible quasi-orientations}. We say that $L$ is doubly slice if there is $S= S_1 \sqcup \dots \sqcup S_r\subseteq S^4$ such that $S$ isotopic to an unlink of 2-spheres in $S^4$ such that each $S_i$ intersects the equatorial $S^3$ in the monochromatic sublink $L_i$ and induces the chosen quasi-orientation.
\end{defn}
This definition we naturally encapsulates definitions for strong and weak double slicing. A link is strongly doubly slice if and only if it is doubly slice with the colouring in which every component has a distinct colour. A quasi-oriented link is weakly doubly slice if it is doubly slice when the link is coloured using a unique colour.

%We also note that if a link is strongly doubly slice, then it is doubly slice with every possible colouring and quasi-orientation.

Although this paper is primarily focused on the case of two component links we also consider some examples relevant to this more general definition. We provide examples of three component links that are weakly slice with exactly one quasi-orientation. For every $n$, we exhibit examples of prime non-split links with $n$ components which are strongly doubly slice.

We discuss several constructions of double slicings. Most notably we prove the following which can be viewed as an extension of Zeeman's result that $K\# -K$ is doubly slice for any knot $K\subseteq S^3$ \cite{Zeeman1965twist_spin}.
\begin{restatable}{thm}{thmTangleDouble} \label{thm:doubling_tangle}
Let $T$ be a tangle consisting of $n$ arcs embedded in $B^3$. Then the link obtained by doubling $T$ is an $n$-component link which is weakly doubly slice with all quasi-orientations.
\end{restatable}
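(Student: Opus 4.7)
The plan is to extend Zeeman's proof that $K \# -K$ is doubly slice (the case $n=1$) via an explicit spinning construction. Write $S^4 = (B^3 \times S^1) \cup_{S^2 \times S^1} (S^2 \times D^2)$ and place $T$ in the fiber $B^3 \times \{1\}$. For each arc $\tau_i$ of $T$, define the 2-sphere
\[
\Sigma_i := (\tau_i \times S^1) \cup (\partial \tau_i \times D^2) \subseteq S^4.
\]
Take the equatorial 3-sphere to be $S^3 := (B^3 \times \{1, -1\}) \cup (S^2 \times [-1, 1])$, with $\pm 1 \in S^1$ antipodal and $[-1,1] \subseteq D^2$ a diameter. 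One checks directly that $\bigsqcup_i \Sigma_i$ meets $S^3$ precisely in $DT$: the two copies of $\tau_i$ at $\phi = \pm 1$ are identical in the product coordinates on $B^3 \times S^1$, but the two halves $B^3_\pm$ of $S^3$ are identified by the standard orientation-reversing reflection of $S^2$, so they fit together as $T \cup -T$, with the product arcs $\partial \tau_i \times [-1,1]$ acting as the trivial connecting strands.

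Next I would verify that each $\Sigma_i$ is unknotted by exhibiting an embedded 3-ball bounding it. Since $\pi_1(B^3) = 1$, Dehn's lemma gives an embedded spanning disk $d_i \subseteq B^3$ for $\tau_i$ with $\partial d_i = \tau_i \cup \beta_i$ for some arc $\beta_i \subseteq \partial B^3$. The spun region
\[
V_i := (d_i \times S^1) \cup_{\beta_i \times S^1} (\beta_i \times D^2)
\]
is a solid torus glued to a 3-ball along a longitudinal annulus, with $\partial V_i = \Sigma_i$. Van Kampen gives $\pi_1(V_i) = \Z *_{\Z} 1 = 1$, and Mayer-Vietoris shows $V_i$ has trivial reduced homology, so by Poincar\'e $V_i \cong B^3$.

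To obtain a single unknotted 2-sphere $\Sigma$ with $\Sigma \cap S^3 = DT$ realising any prescribed quasi-orientation, orient each $\Sigma_i$ suitably and connect-sum the $\Sigma_i$ along a tree of tubes contained in one hemisphere $B^4_+$; these tubes avoid the equator, so $\Sigma \cap S^3 = DT$ is preserved. Reversing the orientation of any single $\Sigma_i$ flips the orientation of the $i$-th component of $DT$, so every one of the $2^{n-1}$ quasi-orientations of $DT$ can be achieved.

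The main technical obstacle is showing that the tubed 2-sphere $\Sigma$ is itself unknotted, since connect-summing unknotted 2-spheres in codimension two is not automatically tame. The cleanest way I envisage to handle this is to perform a parallel 3-dimensional band sum of the $V_i$ along with the 2-dimensional tubing of the $\Sigma_i$, producing an embedded 3-ball $V \subseteq S^4$ with $\partial V = \Sigma$. Intersections among the $V_i$ (which occur when the arcs $\tau_i$ are topologically linked in $B^3$) can be eliminated by perturbing the spanning disks $d_i$ slightly in the $S^1$-direction of $B^3 \times S^1$; this extra freedom is ultimately the geometric reason this construction yields weak, rather than strong, double slicing.
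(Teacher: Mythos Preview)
Your spinning construction produces the ordinary (Artin, $0$-twist) spun $2$-spheres $\Sigma_i$, and these are \emph{not} unknotted once the arcs are knotted. The Dehn's lemma step is where the argument fails. Dehn's lemma concerns simple closed curves lying in the \emph{boundary} of a $3$-manifold; the curve $\tau_i \cup \beta_i$ does not lie in $\partial B^3$, so the lemma does not apply. More to the point, a properly embedded arc $\tau_i \subset B^3$ admits an embedded half-disk $d_i$ with $\partial d_i = \tau_i \cup \beta_i$ and $\beta_i \subset S^2$ if and only if $\tau_i$ is boundary-parallel, i.e.\ a trivial arc. If $\tau_i$ is, say, a trefoil arc, then $\tau_i \cup \beta_i$ is a trefoil in $B^3$ and bounds no embedded disk there; the resulting $\Sigma_i$ is precisely the spun trefoil, a genuinely knotted $2$-sphere in $S^4$. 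So already for $n=1$ your construction does not recover Zeeman's theorem: it exhibits $K \# -K$ as a cross-section of the spun knot, not of an unknotted sphere. Zeeman's unknotted sphere is the \emph{$1$-twist} spin, which is a different surface. The subsequent tubing and $3$-ball banding cannot repair this, since the balls $V_i$ simply do not exist.

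The paper avoids spinning $T$ altogether. Given any choice of orientations on the arcs of $T$, it joins the $2n$ endpoints by arcs outside $B$ to produce a single knot $K$ containing $T$ as a subtangle. Zeeman's theorem then realises $K \# -K$ as a cross-section of an unknotted $2$-sphere, and a short ``tangle doubling'' proposition (Proposition~\ref{prop:tangle_doubling}: if $T$ is a subtangle of $L = F \cap S^3$ then the double of $T$ is a cross-section of a surface isotopic to $F$) converts this into a weak double slicing of $DT$. The freedom in choosing the closing arcs gives every quasi-orientation. If you want to salvage a direct approach, you would have to replace the $0$-spin by the $1$-twist spin and redo the cross-section and tubing analysis in that setting, which is substantially more work than the paper's two-line reduction.
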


\iffalse
\subsection{Double slicing and the double branched cover}
Our primary obstruction to double slicing comes through considering double branched covers. It is well-known that if a knot is doubly slice, then its double branched cover embeds into $S^4$. We make use of the following extension. 

\begin{restatable}{lem}{lemdbcembedding}\label{lem:dbc_embedding}
Let $L$ be a coloured link with $r$ colours which is doubly slice. Then there is an embedding of $\Sigma(L)$ into $\#_{r-1} S^1 \times S^3$ such that the induced map 
\[\pi_1(\Sigma(L)) \rightarrow \pi_1(\#_{r-1} S^1\times S^3)\]
is a surjection.
\end{restatable}

Although we only discuss double branched covers in this paper, there is an analogue of  Lemma~\ref{lem:dbc_embedding} which holds for general $n$-fold branched covers.
\fi

\subsection{Embedding Seifert fibered spaces.}
Our primary obstruction to being strongly doubly slice comes from considering the double branched cover (see Lemma~\ref{lem:dbc_embedding}).  This leads us to study when a Seifert fibered space $Y$ can be smoothly embedded into $S^1 \times S^3$ by a map which induces the following surjection: $H_1(Y;\Z) \rightarrow H_1(S^1 \times S^3;\Z)$. We approach this question using an obstruction derived from Donaldson's diagonalization theorem, which allows us to study to the slightly weaker condition of when $Y$ can be embedded into an integral homology $S^1 \times S^3$. In this paper, we use $Y\cong S^2(e; \frac{p_1}{q_1}, \dots, \frac{p_k}{q_k})$ to denote the space obtained by surgery as illustrated in Figure~\ref{fig:sfs_as_surgery}. We also recall the definition of expansion for Seifert fibered spaces \cite[Definition~1.5]{Issa2018embedding}. Given $Y\cong S^2(e; \frac{p_1}{q_1}, \dots, \frac{p_k}{q_k})$, we say that $Y'$ is obtained from $Y$ by {\em expansion} if it can be written the form
\[
Y' \cong S^2\left(e; \frac{p_1}{q_1}, \dots, \frac{p_k}{q_k}, \frac{p_j}{q_j}, -\frac{p_j}{q_j}\right)
\]
for some $j$ in the range $1\leq j \leq k$. Our obstruction yields the following theorem.
\begin{restatable}{thm}{thmSFembedding}\label{thm:embedding}
Let $Y$ be a Seifert fibered space over base surface $S^2$. Then the following are equivalent:
\begin{enumerate}[(i)]
    \item\label{it:embedding} there exists a smooth $\Z H_*(S^1\times S^3)$ $Z$ such that $Y$ embeds into $Z$ and the induced map $H_1(Y;\Z)\rightarrow H_1(Z;\Z)$ is surjective;
    \item\label{it:expansion} $Y$ is obtained by expansion from a Seifert fibered space which bounds a smooth $\Z H_* (S^1 \times B^3)$;
    \item\label{it:explicit_Description} $Y$ is homeomorphic to a space of the form
    \[
Y\cong S^2 \left(0; \left\{\frac{p_1}{q_1}, -\frac{p_1}{q_1}\right\}^{\geq 1}, \dots, \left\{\frac{p_k}{q_k}, -\frac{p_k}{q_k}\right\}^{\geq 1}  \right),
\]
where $p_i/q_i \geq 2$ for all $i$ and $\gcd(p_i, p_j)=1$ if $i\neq j$.
\end{enumerate}
\end{restatable}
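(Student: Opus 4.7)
The plan is to establish the cycle of implications (iii) $\Rightarrow$ (ii) $\Rightarrow$ (i) $\Rightarrow$ (iii). For (iii) $\Rightarrow$ (ii), given $Y$ in the explicit form of (iii), I would set
\[
Y_0 := S^2\left(0; \frac{p_1}{q_1}, -\frac{p_1}{q_1}, \dots, \frac{p_k}{q_k}, -\frac{p_k}{q_k}\right),
\]
the minimal paired Seifert fibered space of the given form, so that $Y$ is obtained from $Y_0$ by iterated expansion. The task reduces to exhibiting a $\Z H_*(S^1 \times B^3)$ bounded by $Y_0$. I would take the canonical plumbed filling $W_0$ built from continued-fraction expansions of each $p_i/q_i$; the mirror symmetry pairing each $p_i/q_i$-leg with its $-p_i/q_i$-leg lets one slide and cancel 2-handles in symmetric pairs. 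A determinant computation of the linking matrix of $W_0$, exploiting the pairwise coprimality of the $p_i$, identifies $H_*(W_0)$ with $H_*(S^1)$.

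For (ii) $\Rightarrow$ (i), given $Y_0 = \partial W_0$ with $W_0$ a $\Z H_*(S^1 \times B^3)$, the double $Z := W_0 \cup_{Y_0} \overline{W_0}$ is a $\Z H_*(S^1 \times S^3)$ by Mayer--Vietoris, into which $Y_0$ embeds as the separating hypersurface with surjection on $H_1$. To upgrade from $Y_0$ to a general expansion $Y$, observe that adding an expansion pair $\{p/q,-p/q\}$ alters $Y_0$ only inside a tubular neighborhood of a regular fiber, and this modification can be realized inside a standard neighborhood of the corresponding fiber curve in $Z$; iterating produces an embedding of $Y$ with the same surjectivity property.

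The main obstacle is (i) $\Rightarrow$ (iii), which requires a Donaldson-theoretic obstruction. Assuming an embedding $Y \hookrightarrow Z$ with $Z$ a $\Z H_*(S^1 \times S^3)$ and the induced map on $H_1$ surjective, write $Z = Z_1 \cup_Y Z_2$. The strategy is to cap off each side by the canonical (sufficiently) negative-definite star-shaped plumbing $P$ associated to the Seifert data of $Y$, yielding closed smooth 4-manifolds $X_i := Z_i \cup_Y P$. The surjectivity hypothesis, combined with a Mayer--Vietoris calculation, controls $b_1(X_i) = 1$ and forces the intersection form of $X_i$ to be negative definite of rank equal to $b_2(P) + b_2(Z_i)$; Donaldson's diagonalization theorem then embeds this form isometrically in the standard negative diagonal lattice $(\Z^n,-I)$. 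From here the argument becomes combinatorial: the plumbing lattice of $Y$ must embed in $(\Z^n,-I)$ on \emph{both} sides simultaneously, and one must show that this simultaneous-embedding requirement is rigid enough to force each exceptional leg to be paired with a mirror leg and the $p_i$ across distinct pairs to be coprime. The hardest step will be extracting the coprimality condition, since the analysis of such lattice embeddings for star-shaped plumbings is delicate; I expect the argument to parallel the combinatorial obstructions of Donald and Issa \cite{Donald2015Embedding,Issa2018embedding}, adapted from the rational homology sphere setting to account for the extra $\Z$-summand in $H_1(Z)$.
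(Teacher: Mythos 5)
Your constructive directions are essentially the paper's: (ii) $\Rightarrow$ (i) is exactly Lemma~\ref{lem:doubling_ZHS1xB3} (double the filling) together with Lemma~\ref{lem:expansion} (an expansion of $Y$ embeds in the collar $Y\times[0,1]$ with $H_1$-surjectivity), and your reduction of (iii) $\Rightarrow$ (ii) to the minimal paired space $Y_0$ is right. However, your handle-cancellation sketch for the filling cannot work as stated: the canonical plumbed filling is simply connected, and no sequence of slides of 2-handles against each other produces the homology of $S^1\times B^3$ without introducing 1- or 3-handles. The paper (Lemma~\ref{lem:S1xB3construction}) instead takes $Z'=M'\times[0,1]$, where $M'$ is the complement of a fibered neighbourhood of a regular fiber in $S^2\bigl(0;\frac{p_1}{q_1},\dots,\frac{p_k}{q_k}\bigr)$; pairwise coprimality of the $p_i$ makes $M'$ a $\Z H_*(S^1\times B^2)$, and $\partial Z'$ is the double of $M'$, which is $Y_0$.

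The genuine gap is in (i) $\Rightarrow$ (iii), in two respects. First, your capping is impossible as described: surjectivity of $H_1(Y;\Z)\rightarrow\Z$ forces $b_1(Y)=1$, hence $\varepsilon(Y)=0$, and then any simply connected plumbing bounded by $Y$ has intersection form of nullity one, so there is no definite plumbing $P$ to glue in. The paper uses the positive \emph{semi}-definite plumbing $X$ with kernel spanned by an explicit integral vector $v_0$ (Lemma~\ref{lem:kernel_properties}), forms $W_i=X\cup U_i$, and gets definiteness from the gluing criterion Proposition~\ref{prop:def_gluing_thm}, with $H_1(W_i;\Z)=0$ and $b_2(W_i)=n-1$ (Lemma~\ref{lem:Wi_properties}) --- not $b_1=1$ and rank $b_2(P)+b_2(Z_i)$ as you assert. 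Second, and decisively: the ``simultaneous embedding'' requirement in your sense --- each side separately yields a Donaldson factorization $A_i^TA_i=Q$ --- is provably not rigid enough to force coprimality. The paper's Example~\ref{ex:main} exhibits $S^2\bigl(2;\frac{2}{1},\frac{2}{1},\frac{8}{3},\frac{8}{5}\bigr)$, with $\gcd(2,8)=2$, whose form nonetheless satisfies $A^TA=Q$; taking $A_1=A_2=A$ gives two perfectly good factorizations and no contradiction. The missing idea is homological and genuinely couples the two sides: because $H^2(U_1;\Z)\oplus H^2(U_2;\Z)\rightarrow\tor H^2(Y;\Z)$ is an isomorphism (Lemma~\ref{lem:Ui_properties}), the combined image of $(A_1^T\,|\,A_2^T)$ must be the entire sublattice $\{x\in\Z^n : v_0^Tx=0\}$ (Lemma~\ref{lem:tors_condition}). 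Coprimality then follows by producing a vector $\overline{w}$, supported on one mirror family of arms, with $\overline{w}^T(A_1^T\,|\,A_2^T)\equiv 0 \bmod p_1$ (Lemma~\ref{lem:image_detection}, which requires the block structure of $A_i^T$ coming from Lemma~\ref{lem:matrix_factorization} and Theorem~\ref{thm:emb_ineq_eq_case}), together with an $x$ orthogonal to $v_0$ having $\overline{w}^Tx=p_1/\gcd(p_1,p_2)$ (Lemma~\ref{lem:obstruction_applied}). Note finally that you need not rederive the mirror pairing of legs from lattice combinatorics at all: since $U_1$ is a $\Q H_*(S^1\times B^3)$, Aceto's classification (Theorem~\ref{thm:boundingS1timesB3}) already puts $Y$ in paired form, and the lattice analysis is needed only for the coprimality condition.
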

Here the notation $\left\{\frac{p_i}{q_i}, -\frac{p_i}{q_i}\right\}^{\geq 1}$ is used to denote the fact that there is at least one copy of this pair of fractions in the description of $Y$.
\begin{figure}[!ht]
  \begin{overpic}[width=250pt]{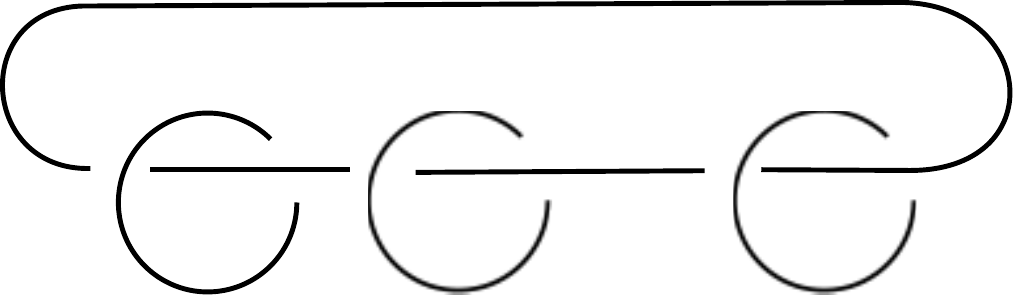}
    \put (-5, 18) {\large $e$}
    \put (60, 5) {\large $\dots$}
    \put (18, -5) {\large $\frac{p_1}{q_1}$}
    \put (43, -5) {\large $\frac{p_2}{q_2}$}
    \put (80, -5) {\large $\frac{p_k}{q_k}$}
  \end{overpic}
  \vspace{0.5cm}
  \caption{Surgery presentation of the Seifert fibered space $S^2(e; \frac{p_1}{q_1}, \ldots, \frac{p_k}{q_k})$.}
  \label{fig:sfs_as_surgery}
\end{figure}

\subsection{Structure} In Section~\ref{sec:constructions}, we discuss constructions of double slicings. The discussion of obstructions is given in Section~\ref{sec:obstructions}. In Section~\ref{sec:Montesinos}, we combine the material form the preceding to sections to discuss the double sliceness of Montesinos links. This section treats Theorem~\ref{thm:embedding} as black box. Finally in Section~\ref{sec:SFS_embedding}, we turn our attention to embedding of Seifert spaces into homology $S^1 \times S^3$s and conclude the paper with a proof of Theorem~\ref{thm:embedding}.

\subsection*{Acknowledgements}
The second author would like to thank his advisor, Josh Greene, for his support and guidance.
\section{Constructing double slicings}\label{sec:constructions}
In this section we consider various constructions which allow us to produce weakly and strongly doubly slice links. 
\begin{figure}[!ht]
  \begin{overpic}[width=300pt]{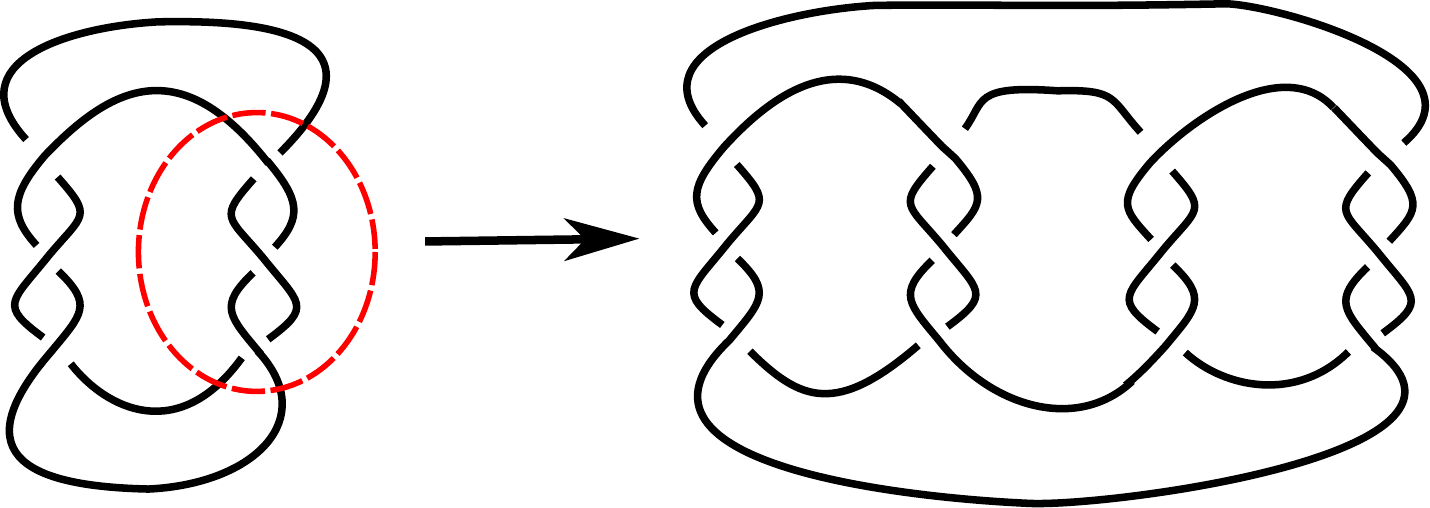}
    \put (5,-5) {$U\sqcup U$}
    \put (60,-5) {$P(3,-3,3,-3)$}
  \end{overpic}
    \vspace{0.5cm}
  \caption{Folding (Lemma~\ref{lem:folding}) a copy of the unlink to show that $P(3,-3,3,-3)$ is strongly doubly slice.}
  \label{fig:3333_example_folding}
\end{figure}
\subsection{Folding}
The following lemma, originally due to Issa, is useful for constructing new double slicings \cite[Lemma~4.9.2]{IssaThesis}. We refer this lemma as the folding construction, since the link $L'$ is obtained by ``folding'' the equatorial $S^3$ (cf. Figure~\ref{fig:Ahmad_lem_schematic}). As an example, Figure~\ref{fig:3333_example_folding} shows how we can apply Lemma~\ref{lem:folding} to a diagram of the unlink with two components to show that the pretzel knot $P(3,-3,3,-3)$ is strongly doubly slice. 
\begin{lem}[Folding construction]\label{lem:folding}
Let $L$ be a link with a planar diagram $D_L$ which contains a disk $D$ intersecting the link in a tangle $T$. Let $L'$ be the link obtained by modifying $D_L$ inside $D$ as shown in Figure~\ref{fig:Ahmad_lemma_statement}. If $L$ arises as transverse intersection $L= F \cap S^3$, between a surface $F\subset S^4$ and an equatorial $S^3$, then we can realize $L'$ as a transverse intersection $L'=F'\cap S^3$, where $F'$ is ambiently isotopic to $F$.
\end{lem}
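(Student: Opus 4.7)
The plan is to produce an ambient isotopy of $S^4$, supported in a small neighborhood of a 3-ball in $S^3$ containing $D$, which sends $F$ to a surface $F'$ with $F' \cap S^3 = L'$. The folding move takes place inside a 3-ball: thicken the disk $D$ to a 3-ball $B \subset S^3$ so that $\partial B$ meets $L$ transversely in $\partial T$ and $L \cap B = T$. The modified link $L'$ then sits inside a slightly larger 3-ball $\tilde B \supset B$, where inside $\tilde B$ it consists of $T$ together with a mirrored copy $\bar T$, joined to $T$ by parallel arcs in $\tilde B \setminus B$; outside $\tilde B$ the links $L$ and $L'$ coincide.

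First I would set up local coordinates near $\tilde B$. Choose a tubular neighborhood $N \cong \tilde B \times (-\epsilon, \epsilon) \subset S^4$ of $\tilde B$, with the equatorial $S^3$ corresponding to $\tilde B \times \{0\}$. Using the transversality of $F$ with $S^3$ together with a preliminary isotopy supported in $N$, I can arrange that $F \cap N$ is a straight slab of the form $T \times (-\epsilon, \epsilon)$, any other sheets of $F$ passing through $N$ having been pushed off $S^3$ inside $N$.

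Next I would deform this slab by re-embedding it as the image of
\[
(x,t) \longmapsto \bigl(\phi_t(x),\, g(t)\bigr),
\]
where $g \colon [-\epsilon, \epsilon] \to [-\epsilon, \epsilon]$ is a smooth function equal to $t \mapsto t$ near the endpoints but possessing two transverse zeros $t_- < t_+ \in (-\epsilon, \epsilon)$, and $\phi_t$ is an isotopy of $\tilde B$ fixing $\partial \tilde B$ pointwise, chosen so that $\phi_{t_-}(T)$ and $\phi_{t_+}(T)$, together with the horizontal arcs of the slab that they bound within $\tilde B \times \{0\}$, realize precisely the tangle $L' \cap \tilde B$ of Figure~\ref{fig:Ahmad_lemma_statement}. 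Linearly interpolating $g$ back to $t \mapsto t$ and $\phi_t$ back to the identity produces a smooth isotopy of the folded slab to the original straight slab, relative to $\partial N$; by isotopy extension this yields an ambient isotopy of $N$ rel $\partial N$, which extends by the identity to the desired ambient isotopy of $S^4$ carrying $F$ to $F'$.

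The main obstacle is the combinatorial verification that the folded slab really does meet the equatorial $S^3$ in the tangle depicted in Figure~\ref{fig:Ahmad_lemma_statement}, i.e.\ that the two slices $\phi_{t_\pm}(T)$ pair up as $T$ and $\bar T$ with the correct identification of endpoints. Once this is checked, the remainder of the argument is a routine application of the isotopy extension theorem exploiting the extra dimension available in $S^4$.
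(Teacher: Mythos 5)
Your overall strategy---fix the equatorial $S^3$ and ambiently isotope $F$---is a legitimate dual of what the paper actually does (the paper fixes $F$ and replaces the ball $B^4$ bounded by the equator with the isotopic ball $X= B^4 \cup (B \times [0,\frac13]) \cup (R \times [\frac13,\frac23])$, so that $\partial X$ meets $F$ in $L'$). But your explicit fold breaks down at exactly the step you flag as ``the main obstacle.'' First, there is a parity problem: since $g(t)=t$ near $t=\pm\epsilon$, $g$ changes sign across $[-\epsilon,\epsilon]$, so if all of its zeros are transverse there must be an odd number of them; ``two transverse zeros $t_-<t_+$'' is impossible. This is not cosmetic: the folded tangle of Figure~\ref{fig:Ahmad_lemma_statement} consists of \emph{three} tangle copies $T$, $\widetilde{T}$, $T$ (the fold crosses the equator in three sheets, the middle one with reversed co-orientation, which is why it appears rotated with all crossings changed), not the two copies ``$T$ together with a mirrored copy $\bar{T}$'' that you describe. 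What you have described is essentially the tangle-doubling move of Proposition~\ref{prop:tangle_doubling} and Figure~\ref{fig:doubling_T}, which is a different construction proved differently in the paper.

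Second, even with three zeros the separated ansatz $(x,t)\mapsto(\phi_t(x),g(t))$ cannot produce $L'$. Because $\phi_t$ fixes $\partial \tilde{B}$ pointwise and $\partial T\subset \partial \tilde{B}$, any two parameters $t\neq t'$ with $g(t)=g(t')$ give sheets sharing the points of $\partial T$; in particular the map is not injective (each boundary strand $\{x\}\times(-\epsilon,\epsilon)$ with $x\in\partial T$ is mapped onto an arc traversed back and forth), so the image is not an embedded surface, and its intersection with $S^3$ is a union of copies of $T$ glued along common endpoints rather than three copies joined by embedded U-turn arcs as in Figure~\ref{fig:Ahmad_lemma_statement}. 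The missing idea is that the folding height must depend on the position in $S^3$ and not only on the collar parameter $t$: this is precisely what the paper's two nested balls $B\subseteq R$ of Figure~\ref{fig:Ahmad_lem_spheres}, pushed to the two different plateau heights $\frac13$ and $\frac23$, accomplish. There, strands of $L$ in different regions cross the new hypersurface once or three times according to their position, and the vertical walls $\partial B\times[0,\frac13]$ and $\partial R\times[\frac13,\frac23]$ supply the connecting arcs between the copies of $T$. Your single ball $\tilde{B}$ with a single fold profile $g(t)$ has no mechanism for creating these position-dependent creases, so the combinatorial verification you defer would fail as the construction stands; repairing it leads you back to a two-region, two-height fold equivalent to the paper's argument.
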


\begin{figure}[!ht]
  \begin{overpic}[width=350pt]{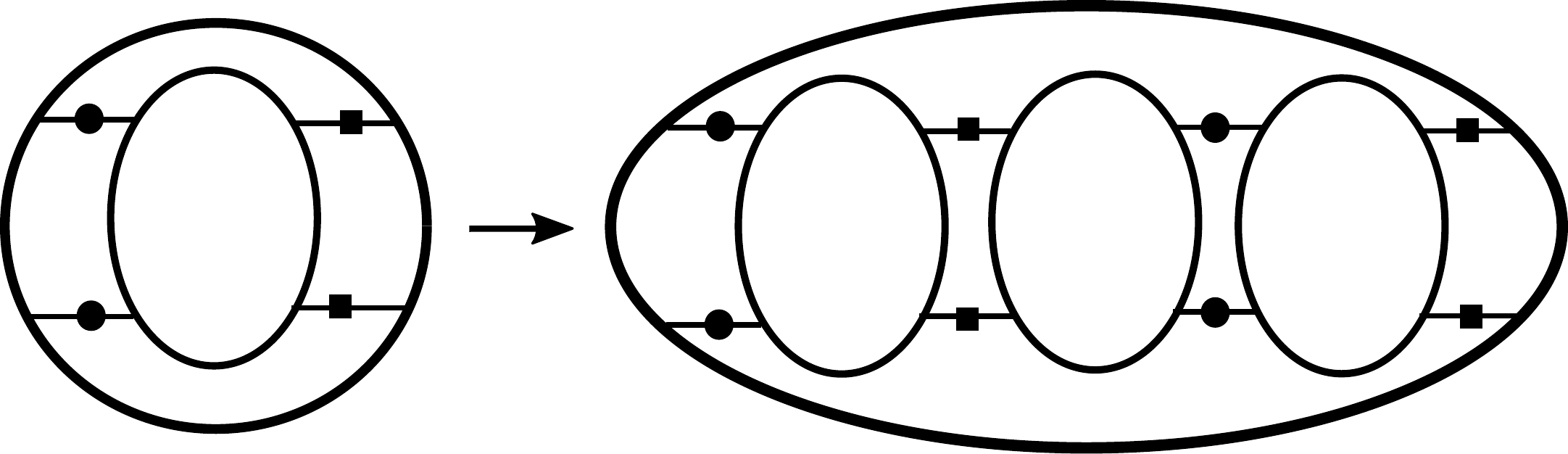}
    \put (11, -5) {\large $D_L$}
    \put (68,-5) {\large $D_{L'}$}
 \put (11, 12) {\LARGE $T$}
 \put (51, 12) {\LARGE $T$}
 \put (68, 12) {\LARGE $\widetilde{T}$}
 \put (83, 12) {\LARGE $T$}
  \put (5, 13) {\large $\vdots$}
 \put (22, 13) {\large $\vdots$}
 \put (45, 13) {\large $\vdots$}
\put (61, 13) {\large $\vdots$}
 \put (77, 13) {\large $\vdots$}
 \put (93, 13) {\large $\vdots$}
  \end{overpic}
  \vspace{0.5cm}
  \caption{The link obtained by folding $T$. The tangle $\widetilde{T}$ is obtained by rotation $T$ by $\pi$ about an axis vertical in the plane of the diagram and then changing all crossings. If $L$ is oriented, then we orient $\widetilde{T}$ by reversing the orientation on each component. If $L$ is coloured, then we colour $
  \widetilde{T}$ with the colouring it inherits naturally. The annotations on the strands on the strands entering and leaving $T$ are to illustrate that the tangle $\widetilde{T}$ has been rotated.}
  \label{fig:Ahmad_lemma_statement}
\end{figure}
\begin{figure}[!ht]
  \begin{overpic}[width=110pt]{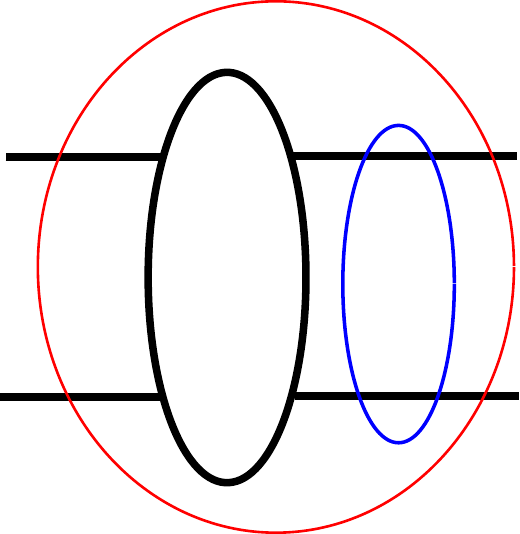}
    \put (35,41) {\LARGE $T$}
    \put (-3,50) {\color{red}$R$}
    \put (60,73) {\color{blue} $B$}
    %\put (75,0) {\LARGE $\overline{T}$}
  \end{overpic}
  \caption{The intersection of the spheres $R$ and $B$ with the diagram $L$.}
  \label{fig:Ahmad_lem_spheres}
\end{figure}

\begin{figure}[!ht]
  \begin{overpic}[width=350pt]{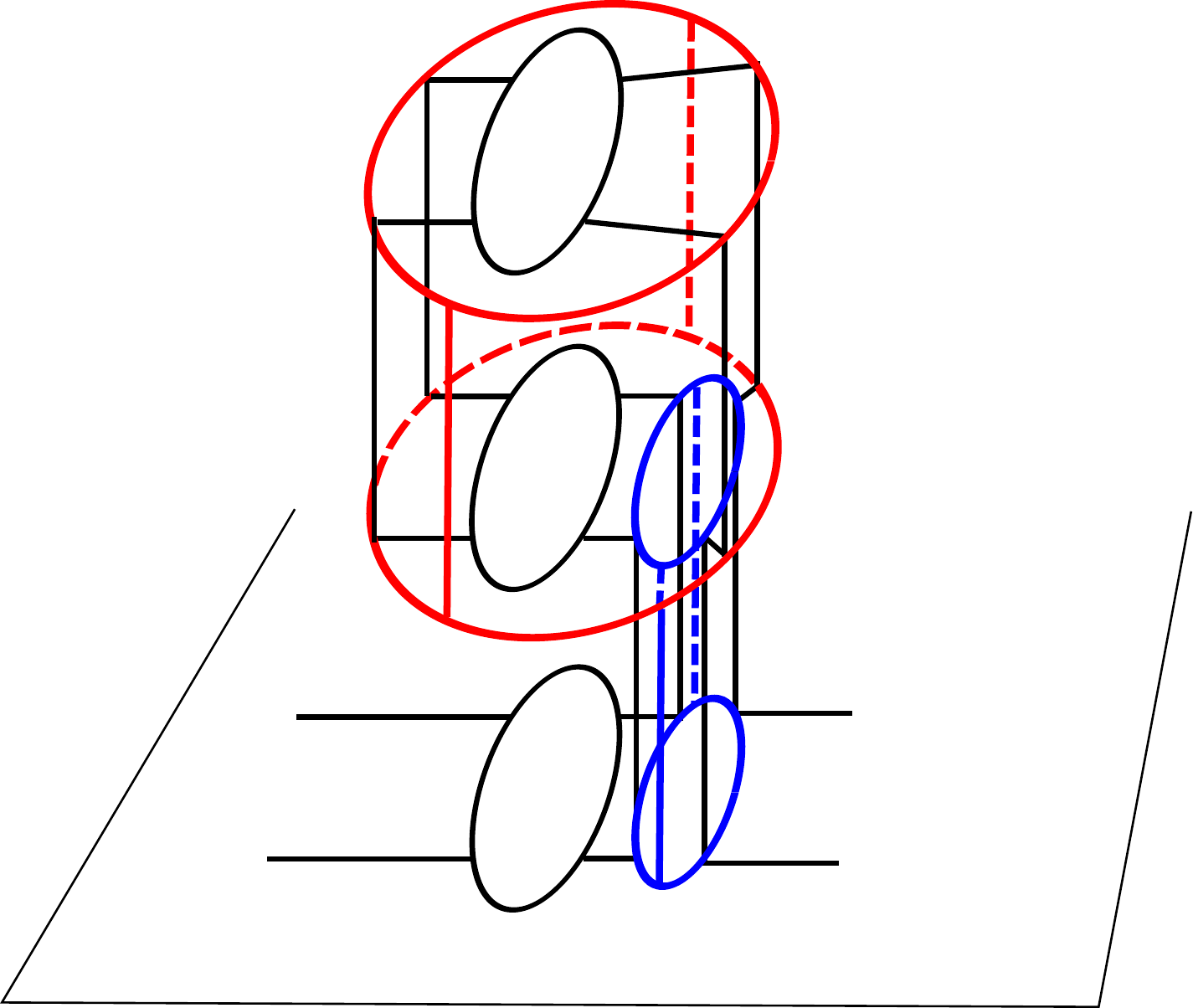}
  \put (0, 20) {\LARGE $S^3$}
    \put (44, 17) {\LARGE $T$}
    \put (44, 70) {\LARGE $T$}
    \put (44,43) {\LARGE \reflectbox{$\widetilde{T}$}}
  \end{overpic}
  \caption{A schematic of the new new copy of $S^3$.}
  \label{fig:Ahmad_lem_schematic}
\end{figure}

\begin{proof}
Rather than isotope the surface we instead choose a new copy of $S^3$, which is isotopic to the original equatorial $S^3$, but intersects $F$ in the desired link. The reader should refer to Figure~\ref{fig:Ahmad_lem_schematic} for a schematic illustrating the idea of the construction. Consider the starting $S^3$ as the boundary of a copy of $B^4$ in $S^4$. Since $F$ intersects $S^3$ transversely, we can parametrize a neighbourhood of the equatorial $S^3$ as $S^3 \times [-1,1]$, where the equatorial $S^3$ is $S^3\times \{0\}$, the $B^4$ intersects this neighbourhood as $B^4 \cap S^3 \times [-1,1]= S^3\times [-1,0]$ and the surface $F$ intersects this neighbourhood as $F \cap S^3 \times [-1,1]= L\times [-1,1]$. Now choose two 3-balls in $S^3\times \{0\}$, $R$ and $B$, which intersect the plane of the diagram $D_L$ as illustrated in Figure~\ref{fig:Ahmad_lem_spheres} and $B\subseteq R$. Consider the set
\[
X= B^4 \cup \left(B \times \left[0, \frac13 \right]\right) \cup \left(R \times \left[\frac13, \frac23 \right]\right)
\]
This $X$ is isotopic to the original $B^4$, and one can check that the boundary $\partial X$ intersects $F$ in a copy of the link $L'$.
\end{proof}

%\subsection{Strong double slicings with any component number}
We can use this proposition to generate a variety of new examples of weak and strong double slicings:
\begin{prop}
There exist prime non-split strong double slicings of any number of components.
\end{prop}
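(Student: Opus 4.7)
The idea is to start with the $n$-component unlink $U^n$, which is tautologically strongly doubly slice, and convert it into a prime non-split link via the folding construction of Lemma~\ref{lem:folding}. Since $U^n$ is the transverse intersection of $n$ pairwise disjoint standard $2$-spheres $S_1 \sqcup \cdots \sqcup S_n \subset S^4$ with the equatorial $S^3$, and since Lemma~\ref{lem:folding} replaces the equatorial $S^3$ by an isotopic one while leaving the ambient surface $F = S_1 \sqcup \cdots \sqcup S_n$ fixed up to ambient isotopy, any link obtained from $U^n$ by folding is again strongly doubly slice, with each component of the new link sliced by the unique $S_i$ that previously sliced the corresponding component of $U^n$.

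Concretely, I would arrange the $n$ components of $U^n$ as $n$ parallel horizontal strands passing through a disk $D$, and take the folding tangle $T$ inside $D$ to be a pretzel-type tangle with an odd number of half-twists (at least three) between each consecutive pair of strands. The resulting link $L_n$ has a connected alternating diagram in which each pair of adjacent components has nonzero linking number, so $L_n$ is non-split. For $n=2$ this recovers the link $P(3,-3,3,-3)$ in Figure~\ref{fig:3333_example_folding}. For general $n$ the link $L_n$ is of Montesinos type, and primality then follows from the standard primality criteria for alternating/Montesinos links (alternatively, from Menasco's criterion applied to the reduced alternating diagram).

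The main obstacle is not the strong double sliceness itself---which is essentially free from Lemma~\ref{lem:folding}---but rather arranging the folding so that the output has exactly $n$ components. The number of components produced depends on the parity of the twisting in the tangle and on how the strands leaving $T$ close up outside $D$, so some care (or an inductive scheme, folding one additional unknotted component into $L_{n-1}$ at each stage) is required to guarantee that the component count, primality, and non-splitness all hold simultaneously. Once a suitable tangle is fixed, the three verifications (strong double sliceness via the folding lemma, non-splitness via connectedness plus linking numbers, and primality via alternating/Montesinos criteria) can be carried out directly.
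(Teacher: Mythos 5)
There is a genuine gap, in fact two. First, you are misusing Lemma~\ref{lem:folding}: the tangle $T$ there must already be a sub-tangle of a diagram of the link you are slicing, whereas you propose to start from a \emph{crossingless} diagram of $U^n$ and then ``take the folding tangle $T$ to be a pretzel-type tangle with an odd number of half-twists.'' You cannot choose $T$ freely; if the diagram of $U^n$ is crossingless, every disk meets it in a crossingless tangle, and folding a crossingless tangle returns an unlink. To have twist regions available for folding, the unlink diagram must be isotoped first so that the twists are compensated elsewhere --- this is exactly what the paper does: it performs $3$ nested Reidemeister-2 moves between each adjacent pair of components (creating $3$ positive and $3$ negative crossings that cancel) and then folds only the positive-crossing tangles, so that each adjacent pair ends up forming $P(3,-3,3,-3)$ as in Figure~\ref{fig:3333_example_folding}. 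Your twist regions, inserted without compensation, do not describe a diagram of the unlink at all.

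Second, your non-splitness argument is not just incomplete but contradictory: you claim adjacent components have \emph{nonzero linking number}, but a strongly doubly slice link has all pairwise linking numbers equal to zero, since the components bound disjoint disks on each side of the equatorial $S^3$ (Remark~\ref{rem:useful}, part (3)). Indeed $\lk=0$ for the two components of $P(3,-3,3,-3)$. So if your construction really produced nonzero linking numbers it could not be strongly doubly slice, and conversely any correct folding output forces you to detect non-splitness by showing adjacent pairs are \emph{nontrivially linked} despite $\lk=0$ (e.g.\ they form the non-split link $P(3,-3,3,-3)$), which is how the paper argues via the connectivity of the graph of geometrically linked pairs. Relatedly, your primality step is unsupported: the mixed-sign twist regions mean the resulting diagrams are not alternating, and for $n\geq 3$ the link is not of Montesinos type, so neither Menasco's criterion nor Montesinos primality applies as stated. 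The paper instead arranges the components \emph{cyclically} (rather than in your linear chain), so that the linking graph is a cycle, hence $2$-connected; primality then follows because the components are unknotted (no knot summands) and a link connected-summand would disconnect that graph. Your linear arrangement yields only a path, which this argument does not cover. Finally, the component-count issue you flag is real but is resolved automatically by the paper's R2-then-fold pattern, which visibly preserves the number of components; deferring it to ``some care or an inductive scheme'' leaves the main combinatorial verification undone.
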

\begin{proof}

Take $n$ cyclically ordered crossingless unknots as in  Figure~\ref{fig:cyclicunknots} and add 3 nested Reidemeister-2 moves between each adjacent pair of unknots. Then by applying Proposition~\ref{lem:folding} along each set of positive crossings as in Figure~\ref{fig:rmove}, we get a strongly doubly slice link (Figure~\ref{fig:stronglink}) such that every pair of adjacent components forms the pretzel link $P(3,-3,3,-3)$. 
If we consider the graph with vertices corresponding to components of this link and edges corresponding to nontrivial linking between the components, the resulting graph would be a cycle. This means that the link must not be split, as the graph would be disconnected. This also means that the link is prime, as it has unknotted components so it cannot have any knot summands, and if it had link summands then removing the connect sum component would make the link split. Because the link's associated graph is 2-connected, the link is prime.

\begin{figure}[h]
    
     \begin{subfigure}{0.4\textwidth}
        \centering
        \includegraphics[width = 0.8\textwidth]{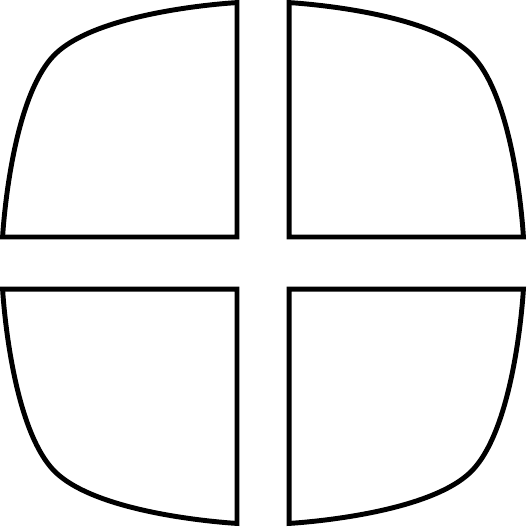}
        \caption{cyclic unknots, with n = 4}
        \label{fig:cyclicunknots}
    \end{subfigure}
    \hspace{1cm}
    \begin{subfigure}{0.4\textwidth}
        \centering
        \includegraphics[width=0.8\textwidth]{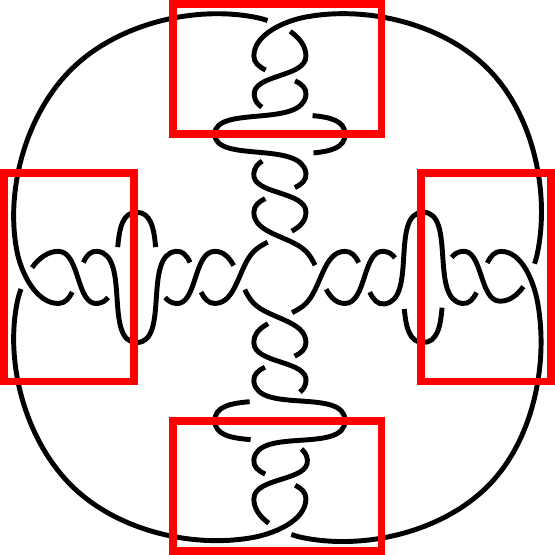}
        \caption{After Reidemeister 2 moves, with highlighted folding boxes.}
        \label{fig:rmove}
    \end{subfigure}
    \hspace{1cm}
    \begin{subfigure}{0.4\textwidth}
        \centering
        \includegraphics[width=0.8\textwidth]{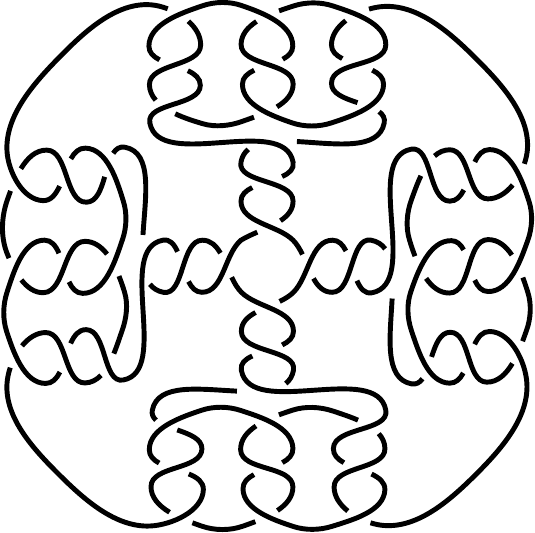}
        \caption{The resulting 4 component link.}
        \label{fig:stronglink}
    \end{subfigure}
\end{figure}
\end{proof}

%\begin{lem}
%Any Montesinos link of the form
%\[\M\left(\frac{p_1}{q_1},\dots , \frac{p_n}{q_n}, -\frac{p_n}{q_n}, \dots, -\frac{p_1}{q_1}\right)\]
%is weakly doubly slice.
%\end{lem}

\subsection{Tangle Doubling}
Using a similar construction to folding, we can construct weak double slicings with all quasi-orientations, among other things. Since the double of $B^3$ is $S^3$, given a tangle $T$ in $B^3$, we can construct the double of $T$ in $S^3$, where the double naturally inherits orientations and colourings from the initial tangle $T$. An schematic of this operation in terms of diagrams is illustrated in Figure~\ref{fig:doubling_T}.

\begin{figure}[!ht]
  \begin{overpic}[width=300pt]{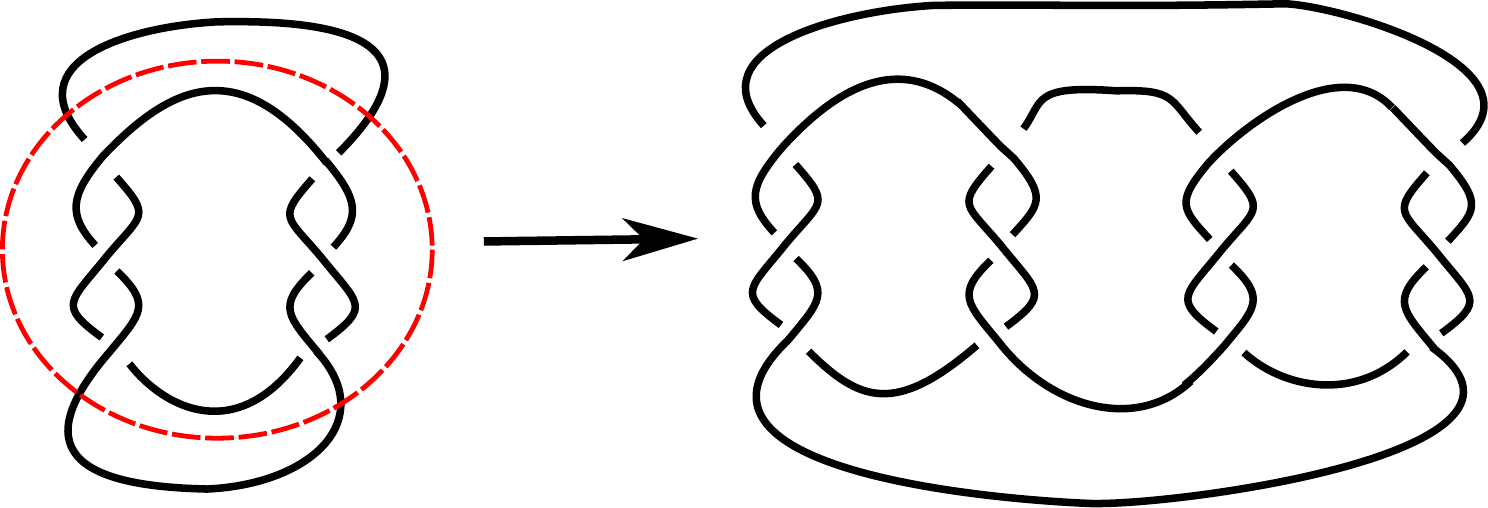}
    \put (5,-5) {$U\sqcup U$}
    \put (60,-5) {$P(3,-3,3,-3)$}
  \end{overpic}
    \vspace{0.5cm}
  \caption{Showing that $P(3,-3,3,-3)$ is strongly doubly slice by doubling a subtangle of the unlink (cf. Proposition~\ref{prop:tangle_doubling}).}
  \label{fig:3333_example2}
\end{figure}
\iffalse
\begin{prop}\label{prop:tangle_doubling}
Let $L$ be a link with a planar diagram $D_L$, which contains a disk $D$ intersecting the link in a tangle $T$. Let $L'$ be the link obtained by doubling $T$. If $L$ arises as transverse intersection $L= F \cap S^3$, between a surface $F\subset S^4$ and an equatorial $S^3$, then we can realize $L'$ as a transverse intersection $L'=F'\cap S^3$, where $F'$ is isotopic to $F$.
\end{prop}
\fi
\begin{prop}\label{prop:tangle_doubling}
Let $L$ be a link in $S^3$ and suppose that there is a 3-ball intersecting the link in a tangle $T$. Let $L'$ be the link obtained by doubling $T$. If $L$ arises as transverse intersection $L= F \cap S^3$, between a surface $F\subset S^4$ and an equatorial $S^3$, then we can realize $L'$ as a transverse intersection $L'=F'\cap S^3$, where $F'$ is isotopic to $F$.
\end{prop}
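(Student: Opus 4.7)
The plan is to follow the strategy of the folding lemma (Lemma~\ref{lem:folding}): rather than directly isotope the surface $F$, I would construct a new 4-ball $X \subset S^4$ whose boundary meets $F$ transversely in the doubled link, and then use an ambient isotopy of $S^4$ taking $\partial X$ to the equatorial $S^3$ to transport $F$ to the desired $F'$.

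First, using transversality and the tubular neighbourhood theorem, I parametrise a neighbourhood of the equatorial $S^3$ in $S^4$ as $S^3 \times [-1, 1]$, with the original 4-ball meeting this neighbourhood in $S^3 \times [-1, 0]$ and $F$ meeting it in $L \times [-1, 1]$. Letting $B \subset S^3$ denote the 3-ball with $L \cap B = T$, I set
\[
X = B \times [-1, 1] \subset S^3 \times [-1, 1] \subset S^4.
\]
Since $X$ is a product of two balls, it is itself a 4-ball, and after smoothing corners its boundary $\partial X$ is a smoothly embedded 3-sphere in $S^4$ bounding $X$. As in the folding construction, $\partial X$ should therefore be isotopic to the equatorial $S^3$ through an ambient isotopy of $S^4$; I take $F'$ to be the image of $F$ under this isotopy, so that $F' \cap S^3 = \partial X \cap F$. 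Using the decomposition $\partial X = (B \times \{-1\}) \cup (\partial B \times [-1, 1]) \cup (B \times \{1\})$ together with $F = L \times [-1, 1]$ in the product neighbourhood, one then computes
\[
\partial X \cap F = (T \times \{-1\}) \cup (\partial T \times [-1, 1]) \cup (T \times \{1\}).
\]

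The remaining task is to identify this intersection with the double of $T$. Topologically $\partial X$ is the double of the 3-ball $B$: the two hemispheres $B \times \{\pm 1\}$ are glued along the collar $\partial B \times [-1, 1]$ of the equator $\partial B$. The outward-normal orientation convention on $\partial(B \times [-1, 1])$ gives the two hemispheres opposite orientations, identifying one with $B$ and the other with $\bar{B}$; collapsing the collar to the equator then identifies the link $\partial X \cap F$ with $T \cup \bar{T} \subset B \cup_\partial \bar{B} = S^3$, which is precisely the double of $T$. The main step beyond this orientation bookkeeping is the existence of the ambient isotopy sending $\partial X$ to the equatorial $S^3$---the same subtlety already implicit in the proof of Lemma~\ref{lem:folding}---but for the explicit 4-ball $X = B \times [-1, 1]$ such an isotopy can be written down directly inside the product neighbourhood by a flattening construction.
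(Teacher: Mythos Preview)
Your proposal is correct and follows essentially the same approach as the paper: choose a 4-ball of the form $B \times I$ inside the product collar $S^3 \times [-1,1]$ and observe that its boundary 3-sphere intersects $F = L \times [-1,1]$ in the double of $T$. The paper uses the interval $[0,\tfrac12]$ rather than your $[-1,1]$ and omits the orientation bookkeeping and isotopy discussion you include, but the argument is the same.
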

\begin{figure}[!ht]
  \begin{overpic}[width=300pt]{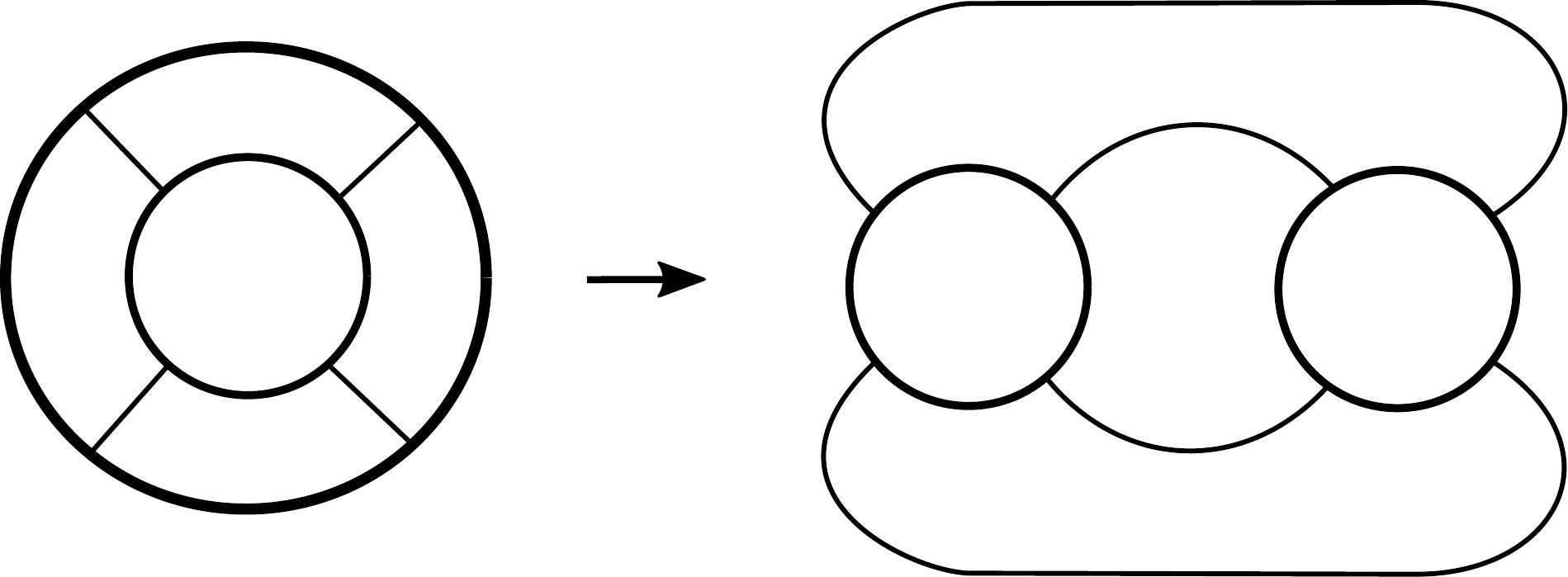}
  \put (0, 35) {\large $D_L$}
    \put (13, 17) {\LARGE $T$}
     \put (60, 17) {\LARGE $T$}
    \put (87,17) {\LARGE $\widetilde{T}$}
  \end{overpic}
  \caption{The link obtained by doubling $T$.}
  \label{fig:doubling_T}
\end{figure}
\begin{proof}
Again, rather than isotope the surface we instead choose a new copy of $S^3$, which is isotopic to the original equatorial $S^3$, but intersects $F$ in the desired link. Since $F$ intersects $S^3$ transversely, we can parametrize a neighbourhood of the equatorial $S^3$ as $S^3 \times [-1,1]$, where the equatorial $S^3$ is $S^3\times \{0\}$. Let $B\subseteq S^3$ be a ball which intersects $L$ in the tangle $T$. Consider the 4-ball given by $B \times [0,\frac12]$, the boundary of this ball is copy of $S^3$ which intersects $F$ in a copy of $L'$. 
\end{proof}

We prove Theorem~\ref{thm:doubling_tangle} as a corollary of the above statement.

\thmTangleDouble*
\begin{proof}
Start with the tangle $T$ lying inside a ball $B$. The tangle has $2n$ end points. For any choice of orientations on the arcs of $T$ we can connect $n-1$ of these endpoints by arcs outside of $B$ in way consistent with the orientations on the arcs to form a single connected component. We can view this arc as a knot $K$ with a trivial arc removed. Attach an arc between the final pair of end points to form $K\# -K$. This is doubly slice by Zeeman's twist spinning construction \cite{Zeeman1965twist_spin}. Thus all possible orientations of $T$ arise as subtangles of a doubly slice knot. Thus by Proposition~\ref{prop:tangle_doubling} we have that the double of $T$ is weakly doubly slice with all quasi-orientations.
\end{proof}

%\begin{proof}
%Fix an orientation on each arc in $T$. For a tangle $T$, we say it is a sub-tangle of $T'$ if there is a smaller ball $B$ such that the restriction of $T'$ to $B$ is $T$, respecting the orientation of $T$. Then $T$ can be obtained as a sub-tangle of $T_K$, where $K$ is a knot, and $T_K$ is the tangle obtained by removing a ball containing a trivial tangle from $(S^3,K)$. Using this observation, we prove that a doubling of $T$ is a cross sectional slice of the 1-twist spin of $K$, which is unknotted for all $K$ \cite{Zeeman1965twist_spin}. 

%The 1-twist spin of a knot K, $\tau(K)$, can be constructed as a mapping torus of the tangle $T_K$, which we can notate as $\tau(K) = \frac{(B^3,T_K)\times [0,1]}{(x,0) \sim (\phi(x),1)} \cup S^2\times B^2$ where $\phi$ is a diffeomorphism of the 3-ball which preserves the boundary and $T_K$ pointwise. Using this notation, consider $M = B \times [0,\frac{1}{2}]$, where $B$ is the sub ball of $(B^3,T_K)$ corresponding to $T$. $M$ is clearly a 4-ball, with boundary $B\times 0 \cup B \times [1/2] \cup \partial B \times [0,1/2]$. the intersection of $B$ with $T_K$ is by definition $T$, and thus $M \cap \tau(K)$ is two copies of $T$ glued along their common boundary in the $S^2 \times I$ collar. Because of the orientation of the interval, the two copies of $T$ will be mirrored, and the orientation induces on the doubled link is exactly the orientation inherited from $T_K$, which induces the desired orientations by construction.
%\end{proof}

\section{Obstructing double slicing}\label{sec:obstructions}
First, we can obtain restrictions on doubly slice links just from considering how the slicing sphere can intersect $S^3$.
\begin{lem}\label{lem:linking_mono}
Suppose that $L$ is an oriented $n$-component link. There is a sphere $S\subseteq S^4$ such that $L=S \cap S^3$ if and only if there are two partitions of the components $\mathcal{P}_1$ and $\mathcal{P}_2$ into sublinks with the following properties:
\begin{enumerate}[(i)]
    \item\label{it:partitition_size} $|\mathcal{P}_1|+|\mathcal{P}_2|=n+1$
    \item\label{it:surfaces} For $k=1,2$, we have that for any distinct $L\in \mathcal{P}_k$ bounds a planar surface $\Sigma_L$ in $B^4$. Moreover if $L,L'\in \mathcal{P}_k$ are distinct sublinks then the surfaces $\Sigma_L$ and $\Sigma_{L'}$ are disjoint.
    \item\label{it:graph_cond} Let $G$ be the bipartite graph with vertex set $\mathcal{P}_1\cup \mathcal{P}_2$ with an edge between $L\in \mathcal{P}_1$ and $L'\in \mathcal{P}_2$ for each common component. Then $G$ is connected (equivalently a tree). 
\end{enumerate}
\end{lem}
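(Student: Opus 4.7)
The plan is to prove both implications by cutting along the equatorial $S^3$ and tracking the Euler characteristic. The key observation is that any closed surface with $\chi = 2$ is a sphere, so the problem reduces to Euler characteristic bookkeeping combined with a connectivity argument.

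For the forward direction, suppose $S \subset S^4$ is a sphere meeting $S^3$ transversely in $L$. Decompose $S^4 = B^4_+ \cup_{S^3} B^4_-$ and set $\Sigma_\pm := S \cap B^4_\pm$. Each $\Sigma_\pm$ is a compact subsurface of $S \cong S^2$, so every connected component of $\Sigma_\pm$ is automatically a planar surface. I would define $\mathcal{P}_1$ and $\mathcal{P}_2$ to be the partitions of the components of $L$ recording which connected component of $\Sigma_+$ and $\Sigma_-$ respectively each component bounds, and let $\Sigma_{L'}$ denote the component of $\Sigma_\pm$ bounded by the sublink $L' \in \mathcal{P}_k$. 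Condition (ii) is immediate, since distinct connected components of a surface are disjoint by definition. For (iii), note that $G$ is by construction the dual adjacency graph of the decomposition of $S$ into the planar pieces $\{\Sigma_{L'}\}$, so the connected components of $G$ biject with the connected components of $S$; since $S$ is connected, so is $G$. Finally, (i) follows from inclusion-exclusion:
\[
2 = \chi(S^2) = \chi(\Sigma_+) + \chi(\Sigma_-) - \chi(L) = (2|\mathcal{P}_1| - n) + (2|\mathcal{P}_2| - n),
\]
using that $\chi(L) = 0$ and that a planar surface with $c$ connected components and $b$ total boundary circles has $\chi = 2c - b$. Rearranging yields $|\mathcal{P}_1|+|\mathcal{P}_2| = n+1$.

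For the reverse direction, given the data $(\mathcal{P}_1, \mathcal{P}_2, \{\Sigma_{L'}\})$, I would glue the planar surfaces in $B^4_+$ indexed by $\mathcal{P}_1$ to those in $B^4_-$ indexed by $\mathcal{P}_2$ along their common boundary $L \subset S^3$, using a standard collar-matching to ensure smoothness. The result is a closed smooth embedded surface $S \subset S^4$. Condition (iii) guarantees that $S$ is connected, and running the Euler characteristic calculation of (i) in reverse gives $\chi(S) = 2$, so $S$ must be a sphere.

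The main obstacle I anticipate is purely bookkeeping: making precise the bijection between connected components of $G$ and connected components of the glued surface. This is essentially a tautology, but requires carefully tracking which planar pieces meet along which link components. The disjointness hypothesis in (ii) is precisely what allows the gluing to work without any further perturbation, and no orientability issue arises since any closed surface with $\chi = 2$ is a sphere, orientable or not.
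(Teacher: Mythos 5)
Your proof is correct and follows essentially the same route as the paper's: both decompose $S^4$ into two $4$-balls along the equatorial $S^3$, take the components of $S\cap B^4_\pm$ as the planar pieces defining $\mathcal{P}_1,\mathcal{P}_2$, and reverse the process by gluing for the converse. The only difference is expository: where the paper asserts directly that $S\setminus L$ has $n+1$ components and that condition (iii) is exactly what is needed to reassemble a sphere, you verify these via the Euler-characteristic count $\chi(\Sigma_\pm)=2|\mathcal{P}_k|-n$ and the fact that a closed connected surface with $\chi=2$ is $S^2$ --- a legitimate filling-in of the same argument rather than a new one.
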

\begin{proof}
Suppose that $L=S\cap S^3$ for some $S\subseteq S^4$. Then $S\setminus L$ separates $S$ into $n+1$ components. And $S^3$ separates $S^4$ into two 4-balls $B_1$ and $B_2$. We take one partition for each ball, with the surfaces classes corresponding to connected component of $S \setminus L$ lying in that ball. Conditions \eqref{it:partitition_size} and \eqref{it:surfaces} are evident. Condition \eqref{it:graph_cond} is precisely the condition required to glue the surfaces from \eqref{it:surfaces} together to form a sphere in $S^4$.
\end{proof}
Lemma~\ref{lem:linking_mono} has some easy but useful consequences. 
\begin{rem}\label{rem:useful} Suppose that a link $L$ is doubly slice with at least two components.
\begin{enumerate}
    \item\label{it:wk_dbl_slice} If we have a two component link $L=L_1\cup L_2$, which is weakly doubly slice, then one of the partitions arising from Lemma~\ref{lem:linking_mono} has to be a partition of the form $\{\{L_1\},\{L_2\}\}$. Hence the link $L$ has to be slice
    \item\label{it:slice_comp} More generally, one can see that if a class in one of the partitions $\mathcal{P}_1$ or $\mathcal{P}_2$ is a singleton, then the corresponding component of $L$ is a slice knot. Since every tree has at least two leaves, we see that at least two classes in the partitions $\mathcal{P}_1$ and $\mathcal{P}_2$ must be singletons. Thus at least two components of a weakly doubly slice link $L$ are slice as knots. 
    \item\label{it:linking} Finally, note that if $L, L' \in \mathcal{P}_k$, then $\lk (L, L')=0$, since they bound disjoint embedded surfaces in $B^4$.
\end{enumerate}
\end{rem}
Next we establish the obstruction to double slicing which arises by considering double branched covers.
%\lemdbcembedding*

%note, we should be able to make a statement about pi1 perhaps
%we should also be able to make a statement about arbitrary numbers of colors
\begin{lem}\label{lem:dbc_embedding}
Let $L$ be a coloured link with $n$ colours which is doubly slice. Then there is an embedding of $\Sigma(L)$ into $\#_{n-1} S^1 \times S^3$ such that the induced map 
\[H_1(\Sigma(L),\Z) \rightarrow H_1(\#_{r-1} S^1\times S^3,\Z)\]
is a surjection.
\end{lem}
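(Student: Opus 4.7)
The plan is to identify the target $\#_{n-1} S^1 \times S^3$ with the double branched cover $\tilde S^4 := \Sigma_2(S^4, S)$ of $S^4$ over the unlink of 2-spheres $S = S_1 \sqcup \cdots \sqcup S_n$ realising the double slicing, and then use the fact that the equatorial $S^3 \subset S^4$ induces a splitting of $\tilde S^4$ along the desired submanifold $\Sigma(L)$.

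The first task is to verify that $\Sigma_2(S^4, S) \cong \#_{n-1} S^1 \times S^3$ for any unlink of $n$ 2-spheres in $S^4$. I would prove this by induction on $n$. The base case says that the double cover of $S^4$ branched over a standardly embedded 2-sphere is again $S^4$, realised by the involution of $S^4 \subset \R^5$ fixing a great 2-sphere. For the inductive step, one adds a further standard 2-sphere inside a small ball disjoint from the rest and analyses the local effect of the branched cover, producing a new $S^1 \times S^3$ connect-summand. With this in hand, the embedding comes almost for free: the equatorial $S^3$ separates $S^4$ into two 4-balls $B_1, B_2$ and cuts $S$ into two planar surfaces $F_j = S \cap B_j$ with $\partial F_j = L$, and pulling back the branched covering along this splitting yields
\[
\tilde S^4 = \Sigma_2(B_1, F_1) \cup_{\Sigma(L)} \Sigma_2(B_2, F_2),
\]
so that $\Sigma(L)$ sits in $\tilde S^4 \cong \#_{n-1} S^1 \times S^3$ as a separating hypersurface.

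For the $H_1$-surjectivity, I would exhibit explicit loops in $\Sigma(L)$ whose images form a basis of $H_1(\tilde S^4;\Z) = \Z^{n-1}$. Pick a point $p_i \in L_i$ for each $i$, and for $k = 1, \dots, n-1$ take an embedded arc $\gamma_k \subset S^3$ from $p_1$ to $p_{k+1}$ whose interior is disjoint from $L$. Its preimage $\tilde\gamma_k$ in the branched cover $\Sigma(L)$ is a loop, since the covering is one-to-one over the endpoints of $\gamma_k$ and two-to-one over its interior. The same curve $\tilde\gamma_k$ is also the lift to $\tilde S^4$ of $\gamma_k$ now regarded as an arc in $S^4$ from $S_1$ to $S_{k+1}$. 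The key sub-step is that the class $[\tilde\gamma_k] \in H_1(\tilde S^4)$ is independent of the chosen arc between $S_1$ and $S_{k+1}$: for any second such arc $\gamma_k'$, the loop $\gamma_k \cup \gamma_k'$ bounds a disk $D$ in the simply connected space $S^4$, and its preimage $\pi^{-1}(D)$ is a 2-chain in $\tilde S^4$ with boundary $\tilde\gamma_k - \tilde\gamma_k'$. Under the identification from the first task, these classes correspond to the $S^1$-cores of the connect-summands, so they form a basis of $H_1(\tilde S^4)$.

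The main obstacle is making the identification in the first task sufficiently explicit to track the $\tilde\gamma_k$ across it. Once we have a standard model in which the lifts of arcs between branch spheres are visibly the $S^1$-factors of $\#_{n-1} S^1 \times S^3$, the independence-of-arc argument immediately transfers the conclusion to the arbitrary unlink $S$ arising from our double slicing; everything else in the argument is formal.
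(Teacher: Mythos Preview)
Your proposal is correct and follows the same overall architecture as the paper: identify $\Sigma_2(S^4,S)$ with $\#_{n-1}S^1\times S^3$, restrict the branched covering to the equatorial $S^3$ to get the embedding of $\Sigma(L)$, and then argue surjectivity on $H_1$. The first two steps are essentially identical to the paper's (the paper builds the identification by puncturing $S^4$ and gluing $S^3\times I$'s rather than by your inductive ``add a sphere in a small ball'' picture, but these are equivalent).

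Where you genuinely diverge is in the surjectivity argument. The paper works algebraically: it identifies $\pi_1$ of the unbranched double cover of the sphere--unlink complement as the subgroup of $F_n$ generated by products of pairs of meridians, observes that the same products of meridians taken in $S^3\setminus L$ hit these generators up to whisker choices that die in $H_1$, and then passes to the branched covers by killing squares of meridians on both sides. You instead work geometrically, lifting arcs $\gamma_k$ between branch components to loops $\tilde\gamma_k$ in $\Sigma(L)$ and checking these hit a basis of $H_1(\#_{n-1}S^1\times S^3)$. These are really two descriptions of the same loops: your $\tilde\gamma_k$, pushed off the branch locus, is exactly a product of two meridians in the unbranched cover. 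Your approach is a bit more hands-on and has the virtue of producing explicit generators; the paper's approach avoids having to track an explicit model, since it only needs to know $\pi_1$ of the complement abstractly. Your ``independence of arc via $\pi^{-1}(D)$'' step is morally right but would need a little care with how the disk meets the branch locus near the shared endpoints; the cleanest fix is the one you already suggest, namely to verify everything in a standard model and transport via an isotopy of unlinks.
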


%\begin{lem}\label{lem:dbc_embedding}
%Let $L$ be a strongly doubly slice link of $n$ components. Then there is an embedding of $\Sigma(L)$ into $\#_{n-1} S^1 \times S^3$ such that the induced map 
%\[H_1(\Sigma(L),\Z) \rightarrow H_1(\#_{n-1} S^1\times S^3,\Z)\]
%is a surjection.
%\end{lem}

\begin{proof}

We first show that the double cover of $S^4$ branched over an $n$ component unlink of $S^2$s is $\#_{n-1} S^1 \times S^3$. Note first that the double cover of $S^4$ branched over a single unknotted $S^2$ is $S^4$. Since the complement of an unknotted $S^2$ in $S^4$ is $S^1 \times B^3$, the double cover of the complement is again $S^1 \times B^3$ and filling in the branching locus gives a copy of $S^4$. If we puncture $S^4$ and take the branched cover over that same unknotted sphere, the result is a copy of $S^4$ with two punctures which we can identify with $S^3\times I$. The double cover of $S^4$ branched over a two component unlink can be decomposed into double covers of two punctured $S^4$'s, each branched over unknotted spheres and then glued to each other along their common boundary. This corresponds to gluing two $S^3 \times I$'s together to form $S^3 \times S^1$. We can extend this to more components by noting that connect summing along a single component of a branch locus produces the connect sum of the resulting branched covers, and the $n$-component sphere-unlink is the $n-1$-fold connect sum of two component sphere unlinks.
If we restrict our branched cover to the $S^3$ that hits our unknotted sphere link in $L$, we see that there is a natural embedding of $\Sigma(L)$ into $\#_{n-1} S^1 \times S^3$. 

%Moreover, since the equatorial $S^3$ in $S^4$ separates the slicing unlink of $L$ into two sets of slice disks, we see that the embedded 
%Because it is strongly doubly slice, $L$ splits the sphere unlink into a collection of discs on either side. By \cite[Proposition 2.2]{Acetoandco}, this splits $\#_{n-1} S^1 \times S^3$ into two rational homology $\natural_{n-1} S^1 \times B^3$'s. Then by applying a Mayer-Vietoris sequence to this splitting, We see that the inclusion maps 

Let $X$ be the complement of $n$-component sphere-unlink. $\pi_1(X)$ can naturally be identified with $F_n$, generated by the meridians of each component. Let $M$ be the cover corresponding to the kernel of the map from $\pi_1(X)$ to $\Z/2\Z$ given by sending every meridian to the nontrivial element. $\pi_1(M)$ is therefore generated by products of any two meridians.

If we take the product of two meridians of the link components in $S^3$, this will differ in $\pi_1(X)$ from the generators of $\pi_1(M)$ by a choice of whisker to each of the link components. This difference vanishes when we abelianize, meaning that the inclusion map on $H_1$ is surjective for the double covers of the complements.

The fundamental group $\pi_1(\#_{n-1} S^1 \times S^3)$ is a quotient $\pi_1(M)$ obtained by attaching the $S^2 \times B^2$'s corresponding to the branch locus, each of which quotients by the square of a meridian. If we look at the restriction of this attachment to the 3-manifold, we see that the regular double cover is quotiented by the squares of the meridians in the same way, so the inclusion map on $H_1$ is also surjective on the level of branched covers.

%Attaching the $S^2 \times B^2$'s quotients by the squares of meridians. Therefore, $\pi_1(\#_{n-1} S^1 \times S^3)$ is a free group whose generators can be seen downstairs in the unlink complement as the product of any two distinct meridians.

%These meridian products can be homotoped into the splitting $S^3$ as products of meridians of distinct link components, which in turn lift to elements of $\pi_1(\Sigma_2(S^3,L))$. This means that the map from  $\pi_1(\Sigma_2(S^3,L)) \rightarrow \pi_1(\Sigma_2(S^4,U_n) = \#_{n-1} S^1 \times S^3)$ induced by inclusion is in fact a surjection.
\end{proof}

\subsection{Weak double slicing with only one quasi-orientation}
Using Lemma~\ref{lem:folding} and Lemma~\ref{lem:linking_mono}, we find examples of three component links that are weakly doubly slice with exactly one quasi-orientation.

\begin{prop}
The pretzel link $P(2n+1, -2n, 2n, -2n)$ is a three component link that is weakly doubly slice with precisely one quasi-orientation.
\end{prop}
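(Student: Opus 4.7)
My plan has three steps: (i) identify the three components of $L = P(2n+1, -2n, 2n, -2n)$ and their linking numbers, (ii) apply the partition obstruction of Lemma~\ref{lem:linking_mono} together with Remark~\ref{rem:useful} to rule out three of the four quasi-orientations, and (iii) realise a weak double slicing for the remaining quasi-orientation via the folding construction of Lemma~\ref{lem:folding}.

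For (i), tracing strands through the pretzel shows that, since box $1$ has odd twist $2n+1$ (swapping strands) while boxes $2,3,4$ have even twists (preserving strands), $L$ has three components: a ``long'' component $C_1$ using both strands of box $1$ together with single strands of boxes $2$ and $4$, and ``short'' components $C_2, C_3$ each occupying single strands in two consecutive even boxes. The self-crossings of $C_1$ lie only in box $1$, so $C_1 \cong T(2, 2n+1)$, which is non-slice for $n \geq 1$; $C_2$ and $C_3$ have no self-crossings and are unknots. Direct computation from the half-twist counts yields $\lk(C_1, C_2) = \lk(C_1, C_3) = -n$ and $\lk(C_2, C_3) = n$ for a suitable choice of orientations. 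For (ii), a weak double slicing yields partitions $\mathcal{P}_1, \mathcal{P}_2$ with $|\mathcal{P}_1| + |\mathcal{P}_2| = 4$, whose singleton classes correspond to slice components by Remark~\ref{rem:useful}(\ref{it:slice_comp}). Since $C_1$ is non-slice, it is never a singleton; the tree condition then forces the partitions into the form $\mathcal{P}_1 = \{\{C_2\}, \{C_1, C_3\}\}$ and $\mathcal{P}_2 = \{\{C_3\}, \{C_1, C_2\}\}$ (up to swapping). The corresponding linking constraints from Remark~\ref{rem:useful}(\ref{it:linking}) become $\lk(C_1, C_2) + \lk(C_2, C_3) = 0$ and $\lk(C_1, C_3) + \lk(C_2, C_3) = 0$; these are satisfied by exactly one of the four quasi-orientations, since reversing any single component flips exactly two of the three linking signs and breaks at least one constraint when $n \neq 0$.

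For (iii), a direct check shows that for a twist tangle of $a$ half-twists $\widetilde{T}$ is the twist tangle of $-a$ half-twists (the $\pi$-rotation about an in-plane vertical axis preserves the projected tangle, and the subsequent crossing change then flips the sign), so folding inserts the triple $(a, -a, a)$ in place of a twist-$a$ box. I would begin with the two-box pretzel $L_0 = P(2n+1, -2n)$, which is the unknot because its $2$-bridge fraction $\frac{1}{2n+1} - \frac{1}{2n} = -\frac{1}{2n(2n+1)}$ has numerator $\pm 1$; as the unknot it is trivially weakly doubly slice. Folding its $-2n$ box replaces it by $(-2n, 2n, -2n)$, producing exactly $P(2n+1, -2n, 2n, -2n)$. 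The original unknot component becomes $C_1$, while the middle inserted $\widetilde{T}$-box together with its adjacent arcs creates two new closed-loop components, which are $C_2$ and $C_3$. By Lemma~\ref{lem:folding}, the resulting link is weakly doubly slice with the quasi-orientation induced from $L_0$, and this must coincide with the unique valid quasi-orientation from step (ii).

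The main obstacle I anticipate is the construction step: verifying carefully that folding the $-2n$ box of the unknot $P(2n+1, -2n)$ produces the pretzel $P(2n+1, -2n, 2n, -2n)$, with the two new closed-loop components arising inside the inserted $\widetilde{T}$-region, and confirming that the induced quasi-orientation matches the one identified in step (ii). The obstruction half is a routine application of the stated lemmas.
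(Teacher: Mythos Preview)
Your proposal is correct and follows essentially the same route as the paper: fold the $-2n$ box of the unknot $P(2n+1,-2n)$ to exhibit one weak double slicing, then use Lemma~\ref{lem:linking_mono} and Remark~\ref{rem:useful} to force the partitions into $\{\{C_1,C_2\},\{C_3\}\}$ and $\{\{C_1,C_3\},\{C_2\}\}$ (since $C_1\cong T_{2,2n+1}$ is non-slice), and finally check that the resulting linking constraints pin down a unique quasi-orientation. The ``main obstacle'' you flag is exactly what the paper dispatches with Figure~\ref{fig:3222_example}; your explicit computation that $\widetilde{T}$ of an $a$-twist box is a $(-a)$-twist box makes this step fully rigorous.
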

\begin{proof}%[Proof of Proposition \ref{1qa}]
The pretzel link $P(2n+1,-2n,2n,-2n)$ can be constructed from $P(2n+1,-2n)$ by folding along the $-2n$ strand as in Figure~\ref{fig:3222_example}. Since $P(2n+1,-2n)$ is the unknot, this implies $P(2n+1,-2n,2n,-2n)$ is weakly doubly slice with the quasi-orientation induced by $P(2n+1,-2n)$. To obstruct weak double slicings with the other quasi-orientations we employ Lemma~\ref{lem:linking_mono}. First, note that the three components of $P(2n+1,-2n,2n,-2n)$ are two unknots $U_1$ and $U_2$ and one copy of the torus knot $T = T_{2,2n+1}$, which is not slice. If there were a weak double slicing, then there would be partitions $\mathcal{P}_1, \mathcal{P}_2$ of the components as in Lemma~\ref{lem:linking_mono}. Since $T$ is not slice, we see that the partitions must take the form $\mathcal{P}_1=\{\{T\cup U_1\}, \{U_2\}\}$ and $\mathcal{P}_2=\{\{T\cup U_2\}, \{U_1\}\}$, cf. Remark~\ref{rem:useful}\ref{it:slice_comp}. This corresponds to the slicing sphere being cut into two discs and two annuli by the equatorial $S^3$. As in the reasoning of Remark~\ref{rem:useful}\ref{it:linking}, this implies that the components must be oriented so that 
\begin{equation}\label{eq:orientation_cond}
\lk(T\cup U_1, U_2)=\lk(T\cup U_2, U_1)=0.
\end{equation}
The linking numbers satisfy
\[|\lk(T,U_1)|=|\lk(T,U_2)|=|\lk(U_1,U_2)|=n\] 
and so up to reorienting every component at once, there is only one way to orient the components in a manner satisfying \eqref{eq:orientation_cond}, namely taking the orientations so that
\[
\lk(T,U_2)=\lk(T,U_1)=-\lk(U_1,U_2).
\]
This shows that a weak double slicing can exist with exactly one quasi-orientation, as required.
\end{proof}
\begin{figure}[!ht]
  \begin{overpic}[width=300pt]{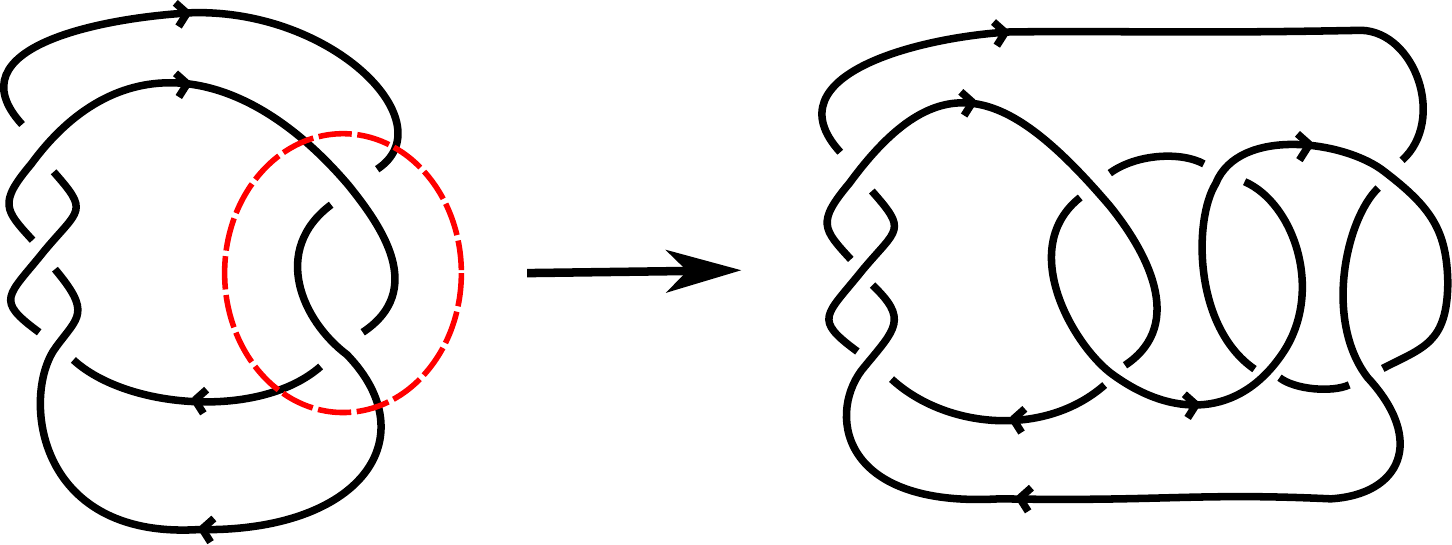}
    \put (15,-5) {$U$}
    \put (70,-5) {$P(3,-2,2,-2)$}
  \end{overpic}
    \vspace{0.5cm}
  \caption{Showing that $P(3,-2,2,-2)$ is weakly doubly slice.}
  \label{fig:3222_example}

\end{figure}

\section{Double slicing for Montesinos links}\label{sec:Montesinos}
First we lay out some conventions concerning rational tangles and Montesinos links. For any $p/q\in \Q$, the $p/q$-rational tangle is the tangle built up from the tangles $1/0$ and $0/1$ using the relationships depicted in Figure~\ref{fig:rational_tangles}.

\begin{figure}[!ht]
  \begin{overpic}[width=300pt]{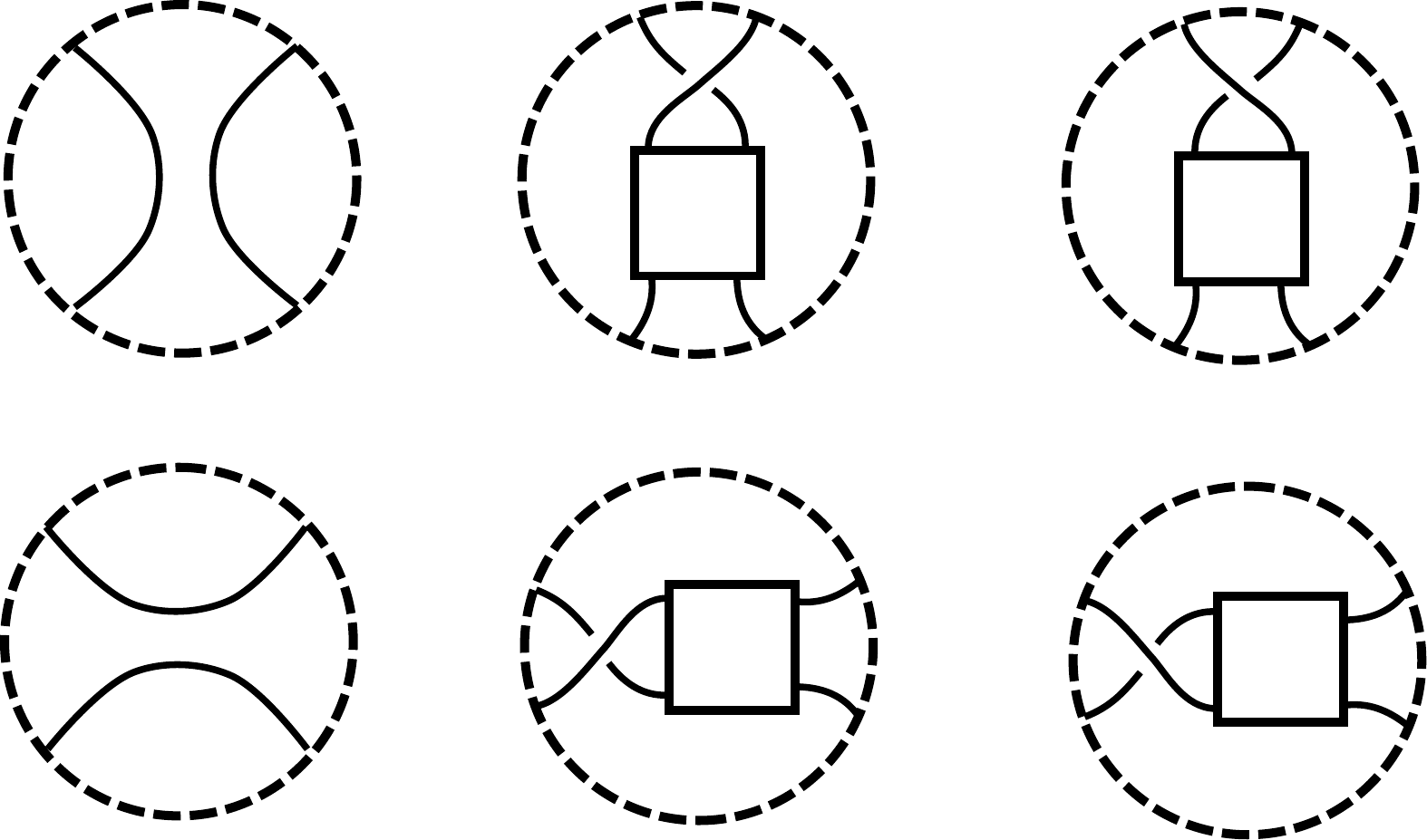}
    \put (10,29) {\large $\frac01$}
    \put (10,-5) {\large$\frac10$}
    \put (46,29) {\large $\frac{p+q}{q}$}
    \put (46,-5) {\large $\frac{p}{p+q}$}
    \put (85,29) {\large $\frac{p-q}{q}$}
    \put (85,-5) {\large $\frac{p}{p-q}$}
    \put (46,42) {$p/q$}
    \put (48,12) {$p/q$}
    \put (85,42) {$p/q$}
    \put (87,12) {$p/q$}
  \end{overpic}
    \vspace{0.7cm}
  \caption{Building up rational tangles.}
  \label{fig:rational_tangles}
\end{figure}
We take the $\M(e; \frac{p_1}{q_1}, \dots, \frac{p_k}{q_k})$ to be the link illustrated in Figure~\ref{fig:Montesinos_diagram}. Since the $\frac{p}{p+q}$ rational tangle is obtained from the $\frac{p}{q}$ rational tangle by introducing a crossing on the side, we see the one can perform an isotopy (a flype) to show that the links
\[\M\left(e; \frac{p_1}{q_1}, \dots , \frac{p_k}{q_k}\right)
\quad\text{and}\quad
\M\left(e+1; \frac{p_1}{q_1}, \dots,\frac{p_i}{p_i+q_i},\dots , \frac{p_k}{q_k}\right)\]
are isotopic.
With these conventions the double branched cover of $\M(e; \frac{p_1}{q_1}, \dots, \frac{p_k}{q_k})$ is the Seifert fibered space $S^2(e; \frac{p_1}{q_1}, \dots, \frac{p_k}{q_k})$.
\begin{figure}[!ht]
  \begin{overpic}[width=300pt]{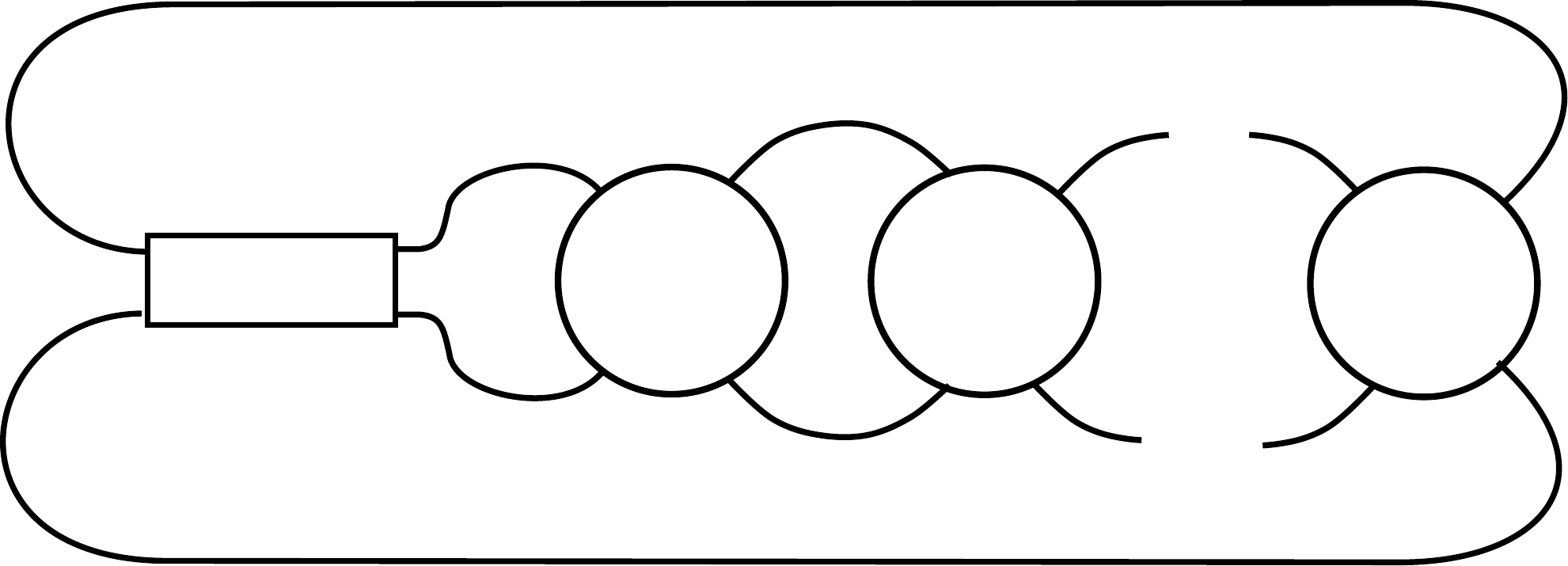}
    \put (41,17) {\large $\frac{p_1}{q_1}$}
    \put (61,17) {\large $\frac{p_2}{q_2}$}
    \put (89,17) {\large $\frac{p_k}{q_k}$}
    \put (14,17) {$e$}
    \put (74,16.5) {\Large $\dots$}
  \end{overpic}
  \caption{The Montesinos link $\M(e; \frac{p_1}{q_1}, \dots, \frac{p_k}{q_k})$.}
  \label{fig:Montesinos_diagram}
\end{figure}

\begin{prop}\label{prop:weak_slicing_Montesinos}
Let $L$ be a Montesinos link of the form
\[
L=\M\left(0;
\frac{p_1}{q_1}, \dots, \frac{p_k}{q_k},-\frac{p_k}{q_k}, \dots,- \frac{p_1}{q_1}
\right),
\]
where at most one of the $p_i$ are even. Then $L$ is a two component link which is weakly doubly slice with both quasi-orientations and both components of $L$ are doubly slice knots.
\end{prop}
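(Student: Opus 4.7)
The plan is to realize $L$ as the double of a well-chosen two-arc tangle in $B^3$ and then invoke Theorem~\ref{thm:doubling_tangle}; both conclusions of the proposition will flow from this single identification.

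First I would let $T \subseteq B^3$ be the tangle obtained by placing the rational tangles $\tfrac{p_1}{q_1}, \ldots, \tfrac{p_k}{q_k}$ in a row with the standard Montesinos connection pattern and no extra integer twists. This $T$ has four boundary endpoints, and the hypothesis that at most one of the $p_i$ is even implies that $T$ consists of precisely two arcs, so its double will be a 2-component link (which also confirms that $L$ itself has two components).

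The next step is to check that doubling $T$ reproduces $L$. Recall from Figure~\ref{fig:doubling_T} that $\widetilde T$ is obtained from $T$ by rotating by $\pi$ about a vertical axis in the plane of the diagram and then switching all crossings. The $\pi$-rotation acts on $\R^3$ as $(x,y,z)\mapsto(-x,y,-z)$, which on a diagram is a left-right flip combined with a global swap of over- and under-strand at every crossing; the subsequent crossing change restores the original over/under data, so the net effect is the 3-dimensional mirror image of $T$ through a vertical plane. Such a mirror sends a rational tangle of slope $\tfrac{p}{q}$ to a rational tangle of slope $-\tfrac{p}{q}$ and reverses the order of the sub-tangles along the row, so $\widetilde T$ is the Montesinos tangle with sub-tangles $-\tfrac{p_k}{q_k}, \ldots, -\tfrac{p_1}{q_1}$. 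Consequently the double of $T$ is exactly $L$, and Theorem~\ref{thm:doubling_tangle} immediately yields that $L$ is weakly doubly slice with both quasi-orientations.

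For the component statement, let $\alpha$ be either of the two arcs of $T$ and let $T_\alpha$ denote the one-arc sub-tangle of $T$ obtained by discarding the other arc. Because the doubling procedure acts arc-by-arc, the component of $L$ corresponding to $\alpha$ has the same knot type in $S^3$ as the double of $T_\alpha$. Theorem~\ref{thm:doubling_tangle} applied to the one-arc tangle $T_\alpha$ then shows this component is weakly doubly slice as a knot, which for a knot coincides with being doubly slice. Applying this to both arcs handles both components of $L$.

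The main obstacle is the rational-tangle identification that the $\widetilde{\phantom{T}}$ operation sends the $\tfrac{p}{q}$-rational tangle to the $-\tfrac{p}{q}$-rational tangle, together with the order-reversal on a row of sub-tangles; both points amount to keeping careful track of how a 3-dimensional $\pi$-rotation about an axis lying in the diagram plane interchanges over- and under-crossings, and once that is in place the rest of the argument is essentially bookkeeping.
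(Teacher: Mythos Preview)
Your proof is correct and follows essentially the same route as the paper: both identify $L$ as the double of the two-arc Montesinos tangle $T$ with sub-tangles $\tfrac{p_1}{q_1},\dots,\tfrac{p_k}{q_k}$ and then invoke Theorem~\ref{thm:doubling_tangle}. Your treatment of the components via doubling the one-arc sub-tangles $T_\alpha$ is equivalent to the paper's observation that each component has the form $K\#-K$; you also supply a bit more detail on why the $\widetilde{\phantom{T}}$ operation negates the rational slopes, which the paper leaves implicit.
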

\begin{proof}
Consider the tangle $T$ in Figure~\ref{fig:Montesinos_tangle}. Every rational tangle consists of two arcs. It follows, by an easy induction, for example, from the conventions laid out in Figure~\ref{fig:rational_tangles} that the configuration of these endpoints of these arcs is governed by the parity of $p$ and $q$ as shown in Figure~\ref{fig:rational_parity}. From this it follows that the tangle $T$ has no closed components if at most one of the $p_i$ is even. Thus if at most one of the $p_i$ are even we can apply Proposition~\ref{prop:tangle_doubling} to show that double of $T$ is 2-component link which is weakly slice with both quasi-orientations. However the double of $T$ is precisely the Montesinos link $L=\M(0; \frac{p_1}{q_1}, \dots, \frac{p_k}{q_k},-\frac{p_k}{q_k}, \dots,- \frac{p_1}{q_1})$, which is the desired link. Since $L$ is obtained as the double of a tangle both its components take the form $K\#-K$ for some knot $K$.
\end{proof}
\begin{figure}[!ht]
  \begin{overpic}[width=300pt]{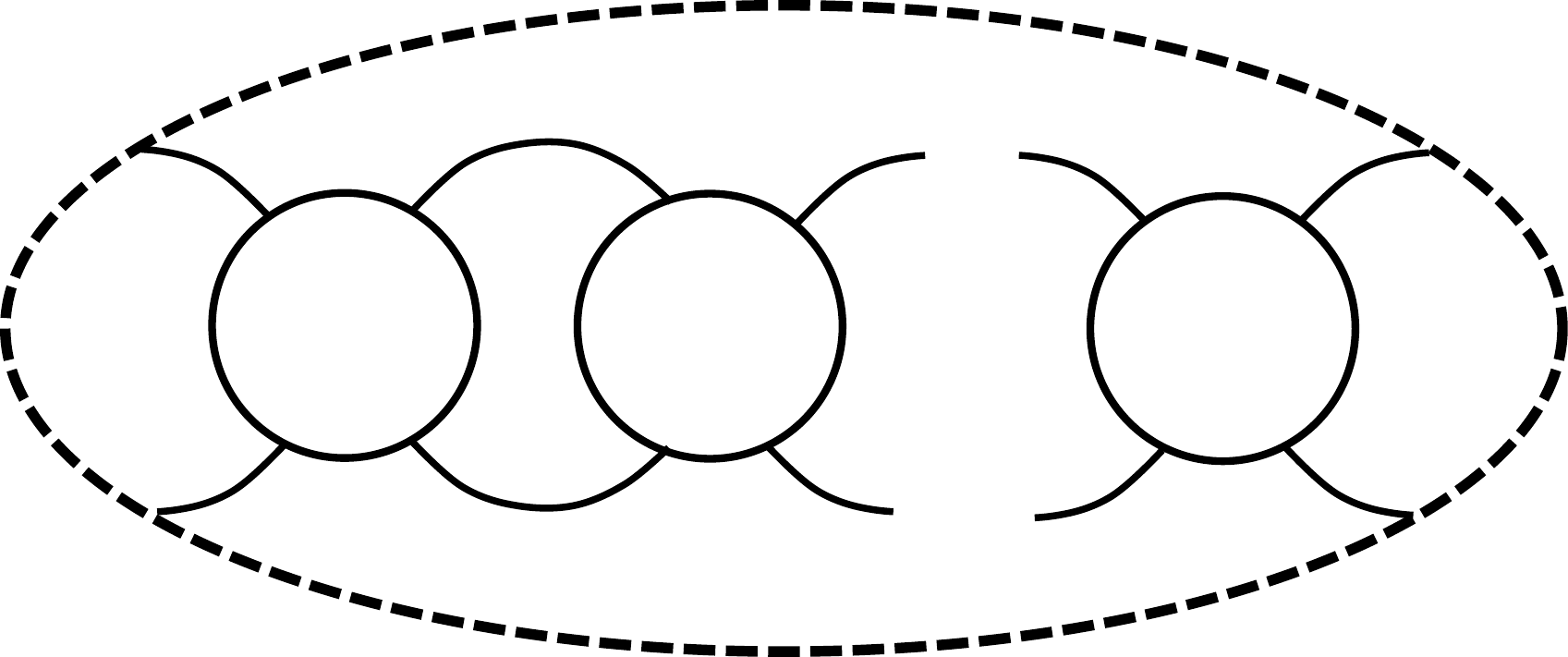}
    \put (21,19) {\Large $\frac{p_1}{q_1}$}
    \put (42,19) {\Large $\frac{p_2}{q_2}$}
    \put (76,19) {\Large $\frac{p_k}{q_k}$}
    \put (57,19) {\LARGE $\dots$}
  \end{overpic}
  \caption{The tangle $T$ to be doubled in the proof of Proposition~\ref{prop:weak_slicing_Montesinos}.}
  \label{fig:Montesinos_tangle}
\end{figure}
\begin{figure}[!ht]
  \begin{overpic}[width=300pt]{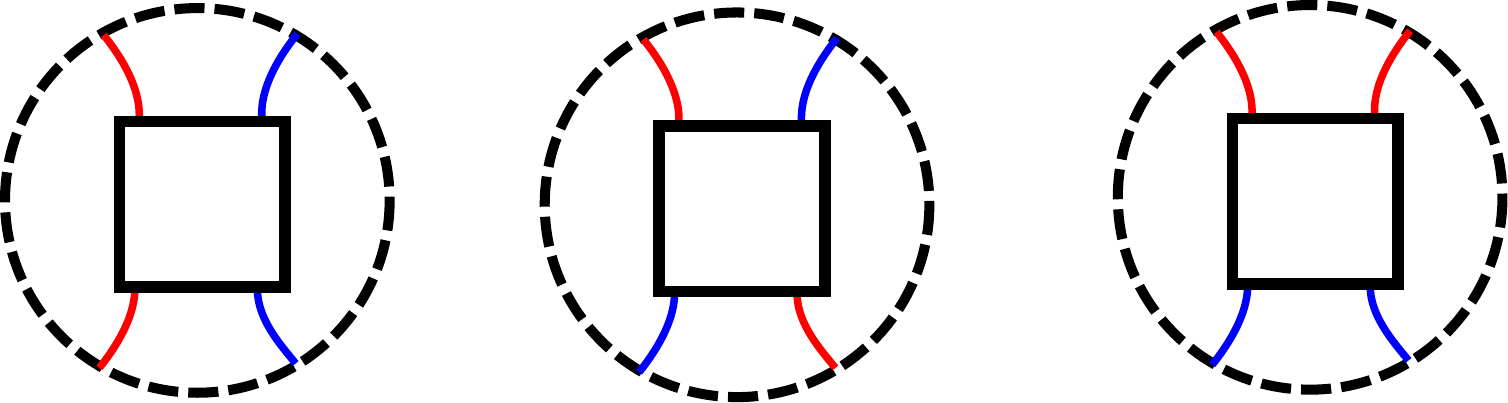}
    \put (13,12) {\Large $\frac{p}{q}$}
    \put (47,12) {\Large $\frac{p}{q}$}
    \put (85,12) {\Large $\frac{p}{q}$}
    \put (0,-5) {$p$ even}
    \put (43,-5) {$p$ odd}
    \put (85,-5) {$p$ odd}
     \put (0,-10) {$q$ odd}
    \put (43,-10) {$q$ odd}
    \put (82,-10) {$q$ even}
  \end{overpic}
  \vspace{1cm}
  \caption{The configuration of endpoints on the boundary of a rational tangle depend on the parities of $p$ and $q$.}
  \label{fig:rational_parity}
\end{figure}

%Using this statement, we can prove Proposition~\ref{prop:4strand_classification}. 
%\fourstrandclassification*
%\begin{proof}
%It follows from Remark~\ref{rem:useful}\eqref{it:wk_dbl_slice} that a weakly doubly slice link with two components is slice. Conversely, the results of \cite{Acetoandco} imply that the only 4-stranded 2-component pretzel slice links we need to consider are of the form $P(a,b,-b,-a)$ for at most one of $a,b$ even. By Proposition~\ref{prop:weak_slicing_Montesinos}, we know that these are all weakly doubly slice with both quasi-orientations.
%\end{proof}
Using Proposition~\ref{prop:weak_slicing_Montesinos}, we can prove Theorem~\ref{thm:4strand}. 
\fourstrand*
\begin{proof}
The implication $(iv)\Rightarrow (iii)$ is trivial. The implication $(iii)\Rightarrow (ii)$ follows from Remark~\ref{rem:useful}\eqref{it:wk_dbl_slice}. The results of \cite{Acetoandco} imply that the slice 4-stranded 2-component pretzel links are in the form demanded by $(i)$. Finally, the implication $(i)\Rightarrow (iv)$ is comes from Proposition~\ref{prop:weak_slicing_Montesinos}, which constructs weak double slicings with both quasi-orientations for these links.
\end{proof}

Additionally, we can now prove Theorem~\ref{thm:montesinos} assuming Theorem~\ref{thm:embedding}.
\thmMain*
\begin{proof}
Proposition~\ref{prop:weak_slicing_Montesinos} exactly shows that $L$ is a two component link which is slice with both quasi-orientations and both components are doubly slice. Now suppose that $L$ is strongly doubly slice. Lemma~\ref{lem:dbc_embedding} implies that the double branched cover $\Sigma(L)$ admits a smooth embedding into $S^1\times S^3$ and the induced map on homology $H_1(\Sigma(L);\Z) \rightarrow H_1(S^1\times S^3;\Z)$ is surjective. The double branched cover of $L$ is the Seifert fibered space
\[
Y\cong S^2\left(0; \frac{p_1}{q_1}, -\frac{p_1}{q_1}, \dots, \frac{p_k}{q_k}, -\frac{p_k}{q_k}\right).
\]
Thus Theorem~\ref{thm:embedding} implies that if there are $i$ and $j$ such that $p_i$ and $p_j$ are not coprime, then the set $\{\frac{p_i}{q_i}, -\frac{p_i}{q_i}\}$ has to equal $\{\frac{p_j}{q_j}, -\frac{p_j}{q_j}\}$. Thus we have $\frac{p_i}{q_i}= \pm\frac{p_j}{q_j}$, as required.
\end{proof}

\begin{lem}
If $p$ is odd, then the Montesinos link
\[\M\left(0;\frac{p}{q}, -\frac{p}{q}, \dots, \frac{p}{q}, -\frac{p}{q}\right)\]
is a strongly doubly slice two component link.
\end{lem}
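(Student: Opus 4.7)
The plan is to prove the statement by induction on the number of pairs $k$ in the tangle sequence, writing $L_k$ for the Montesinos link with $2k$ tangles. The inductive step will apply the folding construction of Lemma~\ref{lem:folding} to a single $\frac{p}{q}$ tangle, which preserves strong double slicing since Lemma~\ref{lem:folding} produces a slicing surface that is \emph{ambient isotopic} to the original.

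For the base case $k=1$, the link $L_1 = \M(0; \frac{p}{q}, -\frac{p}{q})$ is a two-bridge link (Montesinos links with two rational tangles are two-bridge), so it is classified by its double branched cover. A direct calculation of the Smith normal form of the linking matrix shows that the Seifert fibered space $S^2(0; \frac{p}{q}, -\frac{p}{q})$ has $H_1 \cong \Z$; combined with the fact that $L_1$ has two components when $p$ is odd, this forces the double branched cover to be $S^1 \times S^2$ and so $L_1$ is the two-component unlink. Alternatively, one sees this directly from the Montesinos diagram: since $-\frac{p}{q}$ is the mirror of $\frac{p}{q}$, sliding the two tangles together and applying the rational tangle algebra cancels them. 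The unlink is trivially strongly doubly slice.

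For the inductive step, suppose $L_{k-1} = F \cap S^3$ where $F \subset S^4$ is an unlink of two 2-spheres. In a standard Montesinos diagram of $L_{k-1}$, take a small disk $D$ enclosing one of the $\frac{p}{q}$ rational tangles and let $T$ be this tangle in the sense of Lemma~\ref{lem:folding}. Rotating $\frac{p}{q}$ by $\pi$ about a vertical axis preserves it up to isotopy as a rational tangle, and then reversing all crossings sends it to $-\frac{p}{q}$, so $\widetilde{T} = -\frac{p}{q}$. Applying Lemma~\ref{lem:folding} modifies the diagram inside $D$ by replacing $T$ with the horizontal concatenation $T * \widetilde{T} * T = \frac{p}{q}, -\frac{p}{q}, \frac{p}{q}$. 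Since the horizontal concatenation direction of the folding operation agrees with the Montesinos composition direction, the new diagram represents the Montesinos link obtained from $L_{k-1}$ by substituting $(\frac{p}{q}, -\frac{p}{q}, \frac{p}{q})$ in place of a single $\frac{p}{q}$ in the tangle sequence; this is precisely $L_k$.

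Lemma~\ref{lem:folding} then provides $L_k = F' \cap S^3$ with $F'$ ambient isotopic to $F$, so $F'$ is again an unlink of two 2-spheres and $L_k$ is strongly doubly slice, completing the induction. The main delicate point is the base case identification of $\M(0; \frac{p}{q}, -\frac{p}{q})$ as the unlink; the inductive step is otherwise a direct application of the folding construction, exploiting the compatibility between the folding operation on a rational tangle and the Montesinos composition.
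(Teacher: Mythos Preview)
Your proof is correct and follows essentially the same approach as the paper: identify $\M(0;\tfrac{p}{q},-\tfrac{p}{q})$ as the two-component unlink for the base case, then repeatedly apply the folding construction (Lemma~\ref{lem:folding}) to a single $\tfrac{p}{q}$ tangle to pass from $L_{k-1}$ to $L_k$. The paper is terser on the base case and slightly more explicit about the last step, remarking that since the resulting link still has two components its realization as a cross-section of a two-sphere unlink forces each sphere to meet $S^3$ in a single component; you should note this as well, though it follows readily from the fact that the folding construction respects the colouring inherited from the original unlink.
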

\begin{proof}
The link $\M\left(0; \frac{p}{q}, -\frac{p}{q}\right)$ is the unlink on two components and thus is strongly doubly slice. By repeated applications of Lemma~\ref{lem:folding} we see that $L=\M\left(0;\frac{p}{q}, -\frac{p}{q}, \dots, \frac{p}{q}, -\frac{p}{q}\right)$ arises as the intersection of a two component unlink in $S^4$ with an equatorial $S^3$. Since $L$ has two components, this implies that $L$ is strongly doubly slice. 
\end{proof}

%\note{topds stuff}
Using the above corollary along with Lemma \ref{lem:folding}, we can prove a more general statement with restricted coefficients:

\begin{cor}
If $p$ is odd then every mutant of $L = \M(0;\frac{p}{q},-\frac{p}{q}, \dots, \frac{p}{q},-\frac{p}{q})$ is weakly doubly slice with both quasi-orientations.
\end{cor}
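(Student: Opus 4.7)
The plan is to realize every mutant of $L$ as the result of an iterated folding construction applied to the two-component unlink $\M(0;\frac{p}{q},-\frac{p}{q})$. Since the unlink is strongly (hence weakly) doubly slice with both quasi-orientations, Lemma~\ref{lem:folding} will then yield the same conclusion for each mutant.

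The preceding lemma builds $L$ itself in this way: at each step one folds across a disk $D$ containing a single rational tangle $\frac{p}{q}$. Because $\frac{p}{q}$ is palindromic, meaning that as an unoriented tangle it is invariant under $\pi$-rotation about each of the three coordinate axes, the reflected sub-tangle $\widetilde{T}$ produced by the folding step equals $-\frac{p}{q}$ independently of which axis is used, so each fold inserts exactly one new pair $\{\frac{p}{q},-\frac{p}{q}\}$ into the Montesinos link. The key observation driving the corollary is that while an individual rational tangle is palindromic, a composite tangle formed from several adjacent rational tangles is not; consequently, folding across a disk that contains more than one rational tangle admits several inequivalent outputs, one for each possible axis, and these outputs are precisely Conway mutants of one another. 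Thus a sequence of folds from the unlink which agrees with the one producing $L$ except in the axis chosen at a single step yields a Conway mutant of $L$ which, by Lemma~\ref{lem:folding} applied step by step, remains weakly doubly slice with both quasi-orientations.

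The remaining step is to show that every Conway mutant of $L$ is obtained in this fashion. I would argue, using the cyclic and reversal symmetries of Montesinos links together with the fact that mutation of a single rational tangle leaves it unchanged, that every Conway sphere in $L$ may, up to isotopy, be assumed to bound a contiguous block of the rational tangles $\{\frac{p}{q},-\frac{p}{q}\}$. Such a block appears as the folding tangle $T$ at some step of an appropriate iterative folding sequence from the unlink, and the three possible mutations across the Conway sphere are realized by the three possible axis choices at that step. The main obstacle is this combinatorial matching: one must verify that the freedom in choosing both the folding disk $D$ and the reflection axis at each stage is rich enough to realize every Conway mutant of $L$, which amounts to a case analysis of how Conway spheres sit within the iterative Montesinos structure and a check that the symmetries of the pattern of rational tangles suffice to reach each mutation class.
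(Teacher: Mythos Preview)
Your proposal rests on a misreading of the folding lemma. Lemma~\ref{lem:folding} produces a \emph{single} well-defined link $L'$ from $L$ and a choice of sub-tangle $T$: the new piece $\widetilde{T}$ is obtained by rotating $T$ by $\pi$ about the vertical axis in the plane of the diagram and then changing all crossings. There is no ``axis choice'' built into the construction, so your mechanism for producing mutants---varying the axis at a folding step---does not exist. What actually varies from one application to the next is the choice of the disk $D$ (equivalently, which contiguous block of rational tangles one folds), not a reflection axis.

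Even setting the axis issue aside, the strategy of building every mutant from the two-component unlink by iterated folding runs into trouble immediately. From $\M(0;\frac{p}{q},-\frac{p}{q})$, folding any sub-tangle yields a link of the alternating form $J(1,1,\dots,1,1)$; one does not obtain, say, $\M(0;\frac{p}{q},\frac{p}{q},-\frac{p}{q},-\frac{p}{q})=J(2,2)$ by a single fold, and no sequence of folds from the unlink produces it either, since folding always inserts a palindromic block $T\,\widetilde{T}\,T$ and hence cannot create two adjacent $+\frac{p}{q}$ tangles from a strictly alternating pattern. So the base of your induction is wrong and the combinatorial matching you flag as ``the main obstacle'' is in fact unachievable as stated.

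The paper's argument avoids both issues by organising the induction differently. Write any mutant as $J(n_1,\dots,n_{2k})$, a Montesinos link with $k$ maximal blocks of $+\frac{p}{q}$'s alternating with $k$ maximal blocks of $-\frac{p}{q}$'s. The induction is on $k$, not on the number of tangles. The base case $k=1$, namely $J(n_1,n_1)=\M(0;\frac{p}{q},\dots,\frac{p}{q},-\frac{p}{q},\dots,-\frac{p}{q})$, is handled not by folding from the unlink but by the tangle-doubling result (Proposition~\ref{prop:weak_slicing_Montesinos}). For the inductive step one locates the smallest block length $n_{2k-1}$ and observes that folding a run of $n_{2k-1}$ tangles inside the shorter link $J(n_1,\dots,n_{2k-3},\,n_{2k-2}+n_{2k}-n_{2k-1})$ reproduces $J(n_1,\dots,n_{2k})$. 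This reduces $k$ by one and the result follows.
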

\begin{proof}
For brevity we will use the following notation:
\[J(n_1, \dots, n_{2k})=\M\left(0;\left\{\frac{p}{q}\right\}^{\times n_1},\left\{-\frac{p}{q}\right\}^{\times n_2},\dots  ,\left\{\frac{p}{q}\right\}^{\times n_{2k-1}},\left\{-\frac{p}{q}\right\}^{\times n_{2k}}\right).\]
The mutants of $L$ are obtained by permuting its parameters, and so any such mutant $L'$ can written in the form $L'=J(n_1, \dots, n_{2k})$, where the sum of $n_i$ for $i$ odd equals the sum of the $n_i$ for $i$ even. Note that this representation will be unique up to cyclic permutation.

From here the proof will proceed by induction on the value $k$. For $k=1$, we have a link of the form $J(n_1, n_1)$ and the result follows immediately by Proposition~\ref{prop:weak_slicing_Montesinos}. For a larger tuple, consider the smallest $n_i$. By cyclically reordering and reflecting if necessary, we can assume that $n_{2k-1}$ is this minimal value. Since $n_{2k-1}\leq n_{2k}$ and $n_{2k-1}\leq n_{2k-2}$, we see that $L'$ can be obtained by folding a tangle in the link $J'=J(n_1, \dots n_{2k-2},n_{2k-2}+n_{2k}-n_{2k-1})$. $J'$ takes the form
\[
J' =\M\left(0; \dots, \underbrace{\frac{p}{q}, \dots,\frac{p}{q}}_{n_{2k-2}+n_{2k}-n_{2k-1}} \right)
\] and one may break up the $n_{2k-2}+n_{2k}-n_{2k-1}$ tangles as
\[
J' =\M\left(0; \dots, \underbrace{\frac{p}{q}, \dots,\frac{p}{q}}_{n_{2k-2}-n_{2k-1}},\underbrace{\frac{p}{q}, \dots,\frac{p}{q}}_{n_{2k-1}},\underbrace{\frac{p}{q}, \dots,\frac{p}{q}}_{n_{n_{2k}-n_{2k-1}}} \right)
\]
We perform the folding on a disk containing the sequence of $n_{2k-1}$ rational tangles. This folding produces the link $L'$.

By induction we can assume that $J'$ is weakly doubly slice with both quasi-orientations and so, by Lemma~\ref{lem:folding}, $L'$ is also weakly doubly slice with both quasi-orientations. This completes the inductive step of the proof.
\end{proof}

%\subsection{Classification of 4 stranded pretzels}
%In this section we summarize what is known about the double sliceness of 4-stranded pretzel links with two components. Since a doubly slice link is itself slice, it suffices to restrict our attention to slice links. The slice two-component 4-stranded pretzel links have been classified \cite{Acetoandco}. They take one of the following two forms
%\begin{enumerate}
%    \item $L=P(a,b,-b,-a)$,
%    \item $L=P(a,-a,a,-a)$.
%\end{enumerate}
%P(p, q, −p, −q) or P(p, −p, q, −q) f
%The condition that $L$ has two components implies that at most one of $a$ and $b$ is even.
%We have shown that $P(a,b,-b,-a)$ is weakly doubly slice with both quasi-orientations. We have shown that $P(a,-a,a,-a)$ is strongly doubly slice.
%Theorem~\ref{thm:montesinos} implies that in both families the link can be strongly doubly slice only if $a$ and $b$ are coprime or if $a=b$.

\section{Seifert fibered spaces and embeddings}\label{sec:SFS_embedding}
In this section we prove Theorem~\ref{thm:embedding}. First we establish some notation and recall some facts concerning Seifert fibered spaces. See \cite{Neumann1978seifert} for a more in depth treatment on Seifert fibered spaces and plumbings. In this paper we will use $S^2(e; \frac{p_1}{q_1}, \frac{p_2}{q_2}, \dots, \frac{p_k}{q_k})$ to denote the Seifert fibered space obtained by the surgery on the diagram given in Figure~\ref{fig:sfs_as_surgery}, where $e\in\Z$ and $p_i/q_i \in \Q$, where we assume that $p_i$ and $q_i$ are a pair of coprime integers with $|p_i|>1$ for all $i$. Give such a presentation the generalized Euler invariant of $Y$, denoted $\varepsilon(Y)$, can be computed as:
\[
\varepsilon(Y) = e - \sum_{i=1}^k \frac{q_k}{p_k}.
\]
Notice that a surgery description as in Figure~\ref{fig:sfs_as_surgery} is far from unique. In particular, one can perform Rolfsen twists on the $p_i/q_i$ framed components to 
show that
\[
Y\cong S^2\left(e; \frac{p_1}{q_1}, \frac{p_2}{q_2}, \dots, \frac{p_k}{q_k}\right)
\]
and 
\[
Y'\cong S^2\left(e'; \frac{p_1'}{q_1'}, \frac{p_2'}{q_2'}, \dots, \frac{p_k'}{q_k'}\right)
\]
are homeomorphic if
\[
\varepsilon(Y)=\varepsilon(Y')
\]
and there is some permutation $\sigma$ of $\{1, \dots, k \}$ such that for all $i$
\[
\frac{q_i}{p_i}\equiv \frac{q'_{\sigma(i)}}{p'_{\sigma(i)}} \bmod{1}.
\]

One can compute the homology of a Seifert fibered space from this surgery description (cf. \cite[Lemma~4.1]{Issa2018embedding}).
\begin{lem}\label{lemma:sfs_homology}
Let $Y$ be the Seifert fibered space $Y\cong S^2(e; \frac{p_1}{q_1}, \dots, \frac{p_k}{q_k})$.  For $j$ in the range $1\leq j\leq k$ define
\[
d_j=
\begin{cases}
1 &\text{if $j=1,2$}\\
\gcd\{ p_{\sigma(1)}p_{\sigma(2)}\cdots p_{\sigma(j-2)} \mid \sigma\in S_{k} \} &\text{if $3\leq j\leq k$.}
\end{cases}
\] and set
\[
d_{k+1}=(p_{1} \cdots p_k)\varepsilon(Y).
\]
Then $Y$ has homology
\[H_1(Y;\Z)\cong \bigoplus_{i=1}^{k} \frac{\Z}{D_i\Z},\]
where $D_i=d_{i+1}/d_i$. \qed
\end{lem}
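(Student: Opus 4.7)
The plan is to compute $H_1(Y;\Z)$ via Smith normal form applied to a presentation matrix read directly off the surgery description. From Figure~\ref{fig:sfs_as_surgery}, each $p_i/q_i$-framed unknot yields the relation $p_ix_i+q_ih=0$, where $x_i$ is its meridian and $h$ is the meridian of the central $e$-framed unknot (a regular fibre), and the central component yields $x_1+\cdots+x_k+eh=0$. This produces the $(k+1)\times(k+1)$ presentation matrix
\[
M=\begin{pmatrix}
p_1 & 0 & \cdots & 0 & q_1 \\
0 & p_2 & \cdots & 0 & q_2 \\
\vdots & & \ddots & & \vdots \\
0 & 0 & \cdots & p_k & q_k \\
1 & 1 & \cdots & 1 & e
\end{pmatrix},
\]
so by Smith normal form theory $H_1(Y;\Z)\cong\bigoplus_{j=1}^{k+1}\Z/s_j\Z$, where $s_j=d_j(M)/d_{j-1}(M)$, the quantity $d_j(M)$ denotes the gcd of the $j\times j$ minors of $M$, and $d_0(M)=1$. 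The lemma thus reduces to identifying $d_j(M)$ with the quantity $d_j$ in the statement and then dropping the trivial first invariant factor.

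Next I would enumerate the nonzero $j\times j$ minors. Because each of the first $k$ rows is nonzero only in its diagonal position (value $p_i$) and in the last column (value $q_i$), splitting cases on whether the submatrix contains the last row and whether it contains the last column --- and noting that any two rows whose nonzero support lies entirely in the last column are proportional --- shows that every nonzero minor is, up to sign, of one of the shapes $\prod_{r\in R}p_r$ with $|R|\in\{j-1,j\}$, or $q_{r^*}\prod_{r\in R}p_r$ with $|R|\in\{j-2,j-1\}$ and $r^*\notin R$, or a mixed term $\prod_{r\in C}p_r\bigl(e-\sum_{l\in C}q_l/p_l\bigr)$ with $|C|=j-1$ (coming from the submatrix that includes the last row and the last column on the same index set $C$). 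For $j=k+1$ the unique minor is the full determinant, which expansion along the last row identifies as $p_1\cdots p_k\cdot\varepsilon(Y)=d_{k+1}$.

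Finally I would verify that $d_j(M)=d_j$ for $3\le j\le k$. One direction is immediate: each minor listed above is divisible by $\gcd\{\prod_{r\in S}p_r:|S|=j-2\}=d_j$, since even in the mixed term each summand involves at least $j-2$ of the $p_r$'s. For the reverse, for any $S$ with $|S|=j-2$ and any $r^*\notin S$ (which exists since $|S|\le k-2$), the list contains both $p_{r^*}\prod_{r\in S}p_r$ and $q_{r^*}\prod_{r\in S}p_r$; their gcd equals $\prod_{r\in S}p_r$ because $\gcd(p_{r^*},q_{r^*})=1$, and hence $d_j(M)\mid\prod_{r\in S}p_r$. Taking the gcd over all such $S$ gives $d_j(M)\mid d_j$. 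The boundary cases $j=1,2$ are immediate (the entries $1$ in the last row give $d_1(M)=1$, and combining the $2\times 2$ minors $\pm p_i$ and $\mp q_i$ together with $\gcd(p_i,q_i)=1$ gives $d_2(M)=1$), and $j=k+1$ was handled above. Since $d_1(M)=1$ the first invariant factor is trivial, and relabelling the remaining $k$ factors as $D_i=s_{i+1}=d_{i+1}/d_i$ for $i=1,\dots,k$ produces the stated decomposition. I expect the most delicate step to be the enumeration of nonzero minors and the verification that the listed families are exhaustive; once this catalogue is in place, the coprimality $\gcd(p_i,q_i)=1$ does all the heavy lifting.
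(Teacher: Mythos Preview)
Your argument is correct. The presentation matrix you write down is the standard one for the surgery description, your catalogue of nonzero $j\times j$ minors is complete (the case split on inclusion of the last row and last column, together with the observation that two rows supported only in the last column force the determinant to vanish, covers everything), and the coprimality $\gcd(p_i,q_i)=1$ is exactly what makes the reverse divisibility $d_j(M)\mid d_j$ go through. The boundary cases are also handled correctly; note that your argument for $d_2(M)=1$ implicitly uses $k\geq 2$ to produce the minor $\pm q_i$ from Case~4a, but when $k=1$ the statement only defines $d_1$ anyway and sets $d_2=d_{k+1}$, so there is nothing to check.

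As for comparison with the paper: there is none to make. The paper does not prove this lemma at all---it quotes the result from \cite[Lemma~4.1]{Issa2018embedding} and closes with a \qed. Your write-up therefore supplies the omitted computation, and the Smith-normal-form route you take is the natural one (and almost certainly the one in the cited reference).
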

In particular, this implies that 
\begin{equation}\label{eq:SFb1_calc}
b_1(Y)= \begin{cases}
0 &\text{if $\varepsilon(Y)\neq 0$}\\
1 &\text{if $\varepsilon(Y)=0$}
\end{cases}
\end{equation}

If $Y$ satisfies $\varepsilon(Y)\geq 0$, then we can normalize $Y$ so that it takes the form
\[
Y\cong S^2\left(e; \frac{p_1}{q_1}, \frac{p_2}{q_2}, \dots, \frac{p_k}{q_k}\right)
\]
where $\frac{p_i}{q_i}>1$ for all $i$ and $e>0$. Given the standard presentation for $Y$ we can construct a positive semi-definite plumbing bounding $Y$.

Given a rational number $r > 1$, there is a unique (negative) continued fraction expansion
\[
r = [a_1, \ldots, a_{n}]^- := a_1 - \cfrac{1}{a_2 - \cfrac{1}{\begin{aligned}\ddots \,\,\, & \\[-3ex] & a_{n-1} - \cfrac{1}{a_{n}} \end{aligned}}},
\]
where $n \geq 1$ and $a_i \geq 2$ are integers for all $i \in \{1, \ldots, n\}$. We associate to $r$ the weighted linear graph (or linear chain) given in Figure~\ref{fig:linearchain}. We call the vertex with weight labelled by $a_i$ the $i$th vertex of the linear chain associated to $r$, so that the vertex labelled with weight $a_1$ is the first, or starting vertex of the linear chain.

\begin{figure}[ht]
  \begin{overpic}[width=150pt]{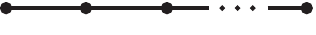}
    \put (0, 0) {$a_1$}
    \put (24, 0) {$a_2$}
    \put (50, 0) {$a_3$}
    \put (95, 0) {$a_n$}
  \end{overpic}
  \caption{Weighted linear chain representing $r = [a_1,\ldots,a_n]^-$.}
  \label{fig:linearchain}
\end{figure}

For each $i\in\{1,\ldots, k\}$, we have the unique continued fraction expansion $\frac{p_i}{q_i} = [a_1^i, \ldots, a_{h_i}^i]^-$ where $h_i \ge 1$ and $a_j^i \ge 2$ are integers for all $j \in \{1,\ldots,h_i\}$. We associate to $Y = S^2(e; \frac{p_1}{q_1}, \ldots, \frac{p_k}{q_k})$ the weighted star-shaped graph in Figure~\ref{fig:plumbing}. The $i$th leg (sometimes also called the $i$th arm) of the star-shaped graph is the weighted linear subgraph for $p_i/q_i$ generated by the vertices labelled with weights $a_1^i, \dots, a_{h_i}^i$. The degree $k$ vertex labelled with weight $e$ is called the central vertex.

\begin{figure}[!ht]
  \begin{overpic}[height=150pt]{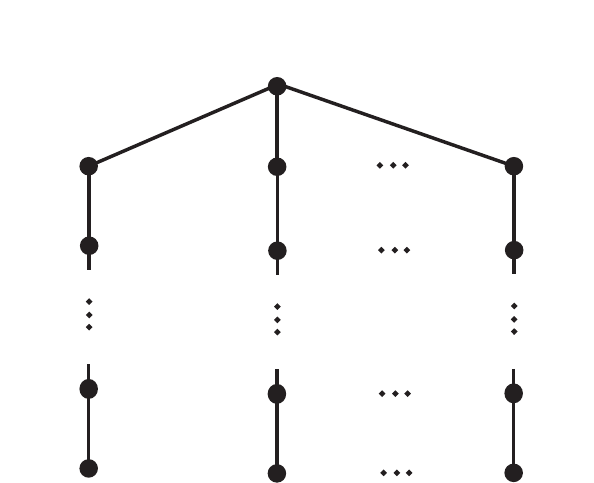}
    \put (45, 73) {\Large $e$}
    \put (4, 52) {\large $a_1^1$}
    \put (4, 38) {\large $a_2^1$}
    \put (2, 2) {\large $a_{h_1}^1$}
    
    \put (36, 52) {\large $a_1^2$}
    \put (36, 38) {\large $a_2^2$}
    \put (34, 2) {\large $a_{h_2}^2$}
    
    \put (90, 52) {\large $a_1^k$}
    \put (90, 38) {\large $a_2^k$}
    \put (90, 2) {\large $a_{h_p}^k$}    

   % \put (-5, 16) {\large $a_{h_1-1}^1$}

  \end{overpic}
  \caption{The weighted star-shaped plumbing graph $\Gamma$.}
  \label{fig:plumbing}
\end{figure}

Let $\Gamma$ be this weighted star-shaped graph for $Y$ and let $X_\Gamma$ be the oriented smooth $4$-manifold obtained by plumbing $D^2$-bundles over $S^2$ according to the weighted graph $\Gamma$.  Denote vertices of $\Gamma$ by $v_1, v_2, \ldots, v_m$. The zero-sections of the $D^2$-bundles over $S^2$ corresponding to each of $v_1, \ldots, v_m$ in the plumbing together form a natural spherical basis for $H_2(X_\Gamma)$. With respect to this basis, which we call the vertex basis, the intersection form of $X_\Gamma$ is given by the weighted adjacency matrix $Q_\Gamma$ with entries $Q_{ij}$, $1 \le i,j \le m$ given by
\[
Q_{ij} = \begin{cases} 
      \text{w}(v_i), & \mbox{if }i = j \\
      -1, & \mbox{if }v_i\mbox{ and }v_j\mbox{ are connected by an edge} \\
      0, & \mbox{otherwise} 
   \end{cases},
\]
where $\text{w}(v_i)$ is the weight of vertex $v_i$. Denoting by $Q_X$ the intersection form of $X$, we call $(H_2(X), Q_X) \cong (\Z^m, Q_\Gamma)$ the intersection lattice of $X_{\Gamma}$ (or of $\Gamma$). %We denote the intersection pairing of two elements $x,y\in\Z^m$ by $x\cdot y = x^T\, Q_\Gamma\, y$. %and the norm $x \cdot x$ by $\norm{x}$. %Now assume that $\Gamma$ is the star-shaped plumbing for $Y_0$. If $\varepsilon(Y) > 0$ then $X_\Gamma$ is a positive definite $4$-manifold and $\Gamma$ is the standard positive definite plumbing graph for $Y_0$. If $\varepsilon(Y_0) = 0$, then $X_\Gamma$ is a positive semi-definite manifold.

\begin{lem}\label{lem:kernel_properties}
If $\varepsilon(Y)=0$, then $X_\Gamma$ is positive semi-definite with nullity of rank one. Let $L=\lcm(p_1, \dots, p_k)$. Then is a row vector $v_0$ with integer entries such that $v_0 Q=0$ and
\begin{enumerate}[(i)]
    \item The coefficient of $v_0$ on the central vertex is $L$
    \item For a coefficient corresponding to a leaf of the $i$th arm is $\frac{L}{p_i}$. 
\end{enumerate}
\end{lem}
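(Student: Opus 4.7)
The plan is to split the assertion into two halves: establishing positive semi-definiteness with nullity one, and exhibiting the kernel vector $v_0$ with the prescribed coefficients. The two halves will interact — the explicit construction of $v_0$ witnesses that the nullity is at least one, while a Cauchy interlacing argument shows that the nullity is at most one and that no eigenvalue is negative.

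For the semi-definiteness, I would first observe that deleting the central vertex from $\Gamma$ leaves a disjoint union of $k$ linear chains whose weights $a^i_j$ are all $\geq 2$. A standard induction on length shows each such chain has positive definite intersection form, so the $(m-1)\times(m-1)$ principal submatrix of $Q_\Gamma$ obtained by deleting the central row and column is positive definite. Cauchy interlacing then forces all but possibly the smallest eigenvalue of $Q_\Gamma$ to be strictly positive, so it is enough to exhibit a nonzero element of $\ker Q_\Gamma$ to conclude that $Q_\Gamma$ is positive semi-definite with nullity exactly one.

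For the kernel vector, label the vertices of the $i$-th arm as $v^i_1,\dots,v^i_{h_i}$, with $v^i_1$ adjacent to the central vertex and $v^i_{h_i}$ the leaf. Write $c_*$ for the coordinate of $v_0$ on the central vertex and $c^i_j$ for the coordinate on $v^i_j$. I would set $c_*:=L$, $c^i_{h_i}:=L/p_i$, and $c^i_{h_i+1}:=0$, then propagate inward along each arm using the recurrence $c^i_{j-1}=a^i_j c^i_j-c^i_{j+1}$ forced by the conditions $(v_0 Q_\Gamma)_{v^i_j}=0$ for $1<j\leq h_i$. The classical identification between the negative continued fraction $[a^i_1,\dots,a^i_{h_i}]^-=p_i/q_i$ and this backwards recurrence then yields $c^i_1=(L/p_i)q_i$ and, formally, $a^i_1 c^i_1-c^i_2=(L/p_i)p_i=L$. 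This is precisely what makes the condition at $v^i_1$, namely $a^i_1 c^i_1-c_*-c^i_2=0$, hold with $c_*=L$.

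The condition at the central vertex then reads $ec_*-\sum_i c^i_1=L\bigl(e-\sum_i q_i/p_i\bigr)=L\varepsilon(Y)=0$, which is exactly where the hypothesis $\varepsilon(Y)=0$ is used. All entries of $v_0$ are integers because $L/p_i$ is an integer by definition of $\lcm$ and the remaining entries along each arm are $L/p_i$ times integer numerators of convergents of the continued fraction. This delivers (i) and (ii) simultaneously. The main obstacle is the bookkeeping of the continued fraction recurrence to recognise $a^i_1 c^i_1-c^i_2$ as $(L/p_i)p_i=L$; once this identification is in hand the rest is linear algebra and interlacing.
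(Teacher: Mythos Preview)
Your proposal is correct and follows essentially the same approach as the paper: both delete the central vertex to obtain a positive definite principal submatrix (the paper phrases this as a codimension-one positive definite subspace rather than invoking Cauchy interlacing, but the content is identical), and both construct $v_0$ by running the continued-fraction recurrence backward from each leaf and using $\varepsilon(Y)=0$ for the central check. The only cosmetic difference is that the paper first builds an unscaled vector $v_i$ on each arm (leaf value $1$) and then forms $v_0=Le_1+\sum_j\frac{L}{p_j}v_j$, whereas you bake the $L/p_i$ scaling in from the start; the computations are the same.
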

\begin{proof}
Consider first the plumbing obtained by deleting the central vertex. This consists of a disjoint union of linear chains in each each vertex has weight at least two. This is easily seen to be positive definite. Thus we see that $X_\Gamma$ has a positive definite subspace of codimension one. Thus to to establish that it is positive semi-definite with nullity of rank one, it suffices to exhibit the vector $v_0$ as described in the statement of the lemma. The remainder of this proof is taken up with constructing this $v_0$.

First let $p/q=[a_1, \dots, a_n]^{-}$ be a continued fraction. Define the integers $b_1, \dots, b_{n+1}$ recursively by the conditions that $b_{n+1}=0$, $b_{n}=1$ and $b_{k-1}= a_{k} b_k - b_{k+1}$ for $1<k\leq n$. For integers defined this way we have that
\[
(0, b_1, \dots, b_n)\begin{pmatrix}
k & -1    &     & \\
-1 & a_1 & -1 &\\
    & -1   & \ddots  & -1\\
    &       & -1 & a_n
\end{pmatrix}
=
(-b_1, a_1 b_1 -b_2,0, \dots, 0).
\]
However notice that the $b_k$ satisfy the same recursion relation as the denominators of the sequence of continued fractions $[a_k, \dots, a_n]$ as $k$ decreases. Thus we see that the $b_k$ are precisely these denominators. and we have $b_1=q$ and $p=a_1 b_1 -b_2$.

Thus if we now consider the full plumbing matrix $Q$ with rows ordered so that the central vertex corresponds to the first column. The construction of the previous paragraph shows that for the $i$th arm we have vector $v_i$, which has non-zero entries only on the entries corresponding to vertices of the $i$th arm and take value $1$ on the leaf of the arm such that
$v_i Q =(-q_i, 0,\dots,0, p_i, 0, \dots,0 )$, where the $p_i$ occurs for the vertex of the arm adjacent to the central vertex. 

We define the vector $v_0$ to be the linear combination 
\[
v_0= Le_1 + \sum_{j=1}^k \frac{L}{p_j} v_j,
\]
where $e_1=(1,0, \dots, 0)$ and $L=\lcm(v_1, \dots v_k)$. It is a calculation using the fact that $\varepsilon(Y)=0$ to show that this satisfies $v_0 Q =0$. The other properties are evident from the construction.
\end{proof}

%Let $\iota : (\Z^m, Q_\Gamma) \rightarrow (\Z^r, \mbox{Id})$, $r > 0$, be a map of lattices, i.e. a $\Z$-linear map preserving pairings, where $(\Z^r, \mbox{Id})$ is the standard positive diagonal lattice. We denote the orthonormal coordinate vectors of $(\Z^r, \mbox{Id})$ by $e_1,\ldots,e_r$. We call $\iota$ a lattice embedding if it is injective. We adopt the following standard abuse of notation. First, for each $i\in\{1,\ldots,m\}$, we identify the vertex $v_i$ with the corresponding $i$th basis element of $(\Z^m, Q_\Gamma)$. Moreover, we shall identify an element $v \in (\Z^m, Q_\Gamma)$ with its image $\iota(v) \in (\Z^r, \mbox{Id})$.

We will make use of the classification of Seifert fibered spaces which bound smooth $\Q H_*(S^1\times B^3)$s. The classification was first proven by Aceto \cite{Aceto2015plumbed}, although one implication was implicit in the work of Donald \cite{Donald2015Embedding}. As was demonstrated in \cite{Issa2018bounding_definite}, this result can also be deduced relatively easily from Theorem~\ref{thm:emb_ineq_eq_case} below.
\begin{thm}\label{thm:boundingS1timesB3}
Let $Y$ be a Seifert fibered space over $S^2$. Then $Y$ is the boundary of a smooth $\Q H_*(S^1\times B^3)$ if and only if $Y$ is homeomorphic to a space of the form
\[
Y \cong S^2\left( 0;  \frac{p_1}{q_1}, - \frac{p_1}{q_1}, \dots,  \frac{p_k}{q_k}, - \frac{p_k}{q_k}  \right),
\]
where $p_i> q_i \geq 1$ are coprime integers for all $i$.
\end{thm}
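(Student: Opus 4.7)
The plan is to prove the two implications separately.

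\textbf{Sufficiency.} If $Y$ has the stated form then $Y$ is the double branched cover of the Montesinos link
\[
L = \M\left(0; \tfrac{p_1}{q_1}, -\tfrac{p_1}{q_1}, \ldots, \tfrac{p_k}{q_k}, -\tfrac{p_k}{q_k}\right).
\]
By Proposition~\ref{prop:weak_slicing_Montesinos}, $L$ is weakly doubly slice with doubly slice components, and hence slice by Remark~\ref{rem:useful}\eqref{it:wk_dbl_slice}. Choosing disjoint slicing disks $D_1, D_2 \subset B^4$ for the two components, let $W$ be the double cover of $B^4$ branched over $D_1 \sqcup D_2$. Then $\partial W = Y$ and $\chi(W) = 2\chi(B^4) - \chi(D_1 \sqcup D_2) = 0$. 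An argument modelled on the proof of Lemma~\ref{lem:dbc_embedding}, with the $2$-component sphere unlink in $S^4$ replaced by $D_1 \sqcup D_2 \subset B^4$, shows that $\pi_1(W)$ is generated by the product of lifts of the two meridians, so $H_1(W;\Z)\cong \Z$ and $H_2(W;\Z)=0$, and hence $W$ is a $\Z H_*(S^1 \times B^3)$.

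\textbf{Necessity.} Conversely, suppose $Y = \partial W$ with $W$ a smooth $\Q H_*(S^1 \times B^3)$. The long exact sequence of $(W,Y)$ with rational coefficients, combined with Poincaré-Lefschetz duality, yields $H_1(Y;\Q) \cong H_1(W;\Q) \cong \Q$, so $b_1(Y) \geq 1$ and hence $\varepsilon(Y) = 0$ by \eqref{eq:SFb1_calc}. After normalizing to a presentation with all slopes $\frac{p_i}{q_i} > 1$, let $X_\Gamma$ be the positive semi-definite plumbing supplied by Lemma~\ref{lem:kernel_properties} and form the closed oriented $4$-manifold $Z := X_\Gamma \cup_Y (-W)$. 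A Mayer-Vietoris calculation, using that $H_1(Y;\Q) \to H_1(W;\Q)$ is an isomorphism, shows that $Z$ has trivial rational $H_1$ and that its intersection form is positive definite and isomorphic to the quotient of the vertex lattice of $X_\Gamma$ by its radical $\langle v_0 \rangle$. Donaldson's diagonalization theorem then produces an isometric embedding of this quotient into the standard diagonal form $(\Z^N, \operatorname{Id})$; this is precisely the general embedding input packaged in the forthcoming Theorem~\ref{thm:emb_ineq_eq_case}.

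The main obstacle lies in the combinatorial analysis of such a lattice embedding. The strategy is to study the embedding arm by arm: along each linear chain the weights $a_j^i \ge 2$ force the images of consecutive vertex vectors to share at most one basis direction, making the embedding of a single arm quite rigid. The null vector $v_0$ from Lemma~\ref{lem:kernel_properties}, whose coefficient on the leaf of the $i$th arm equals $\operatorname{lcm}(p_1,\ldots,p_k)/p_i$, simultaneously encodes the divisibility relations among the $p_i$. Matching this divisibility data against the rigidity of each arm embedding should force the arms of $\Gamma$ to pair up so that each arm realizing $\frac{p_i}{q_i}$ is matched with an arm realizing $-\frac{p_i}{q_i}$, and that the $p_i$'s of distinct pairs are coprime. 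This combinatorial extraction is precisely the classification carried out by Aceto \cite{Aceto2015plumbed}, and alternatively derived directly from the embedding inequality of Theorem~\ref{thm:emb_ineq_eq_case} in \cite{Issa2018bounding_definite}; either argument can be invoked to conclude the proof.
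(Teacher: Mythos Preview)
The paper does not actually prove this theorem; it is quoted from the literature, with citations to Aceto \cite{Aceto2015plumbed} (original proof), Donald \cite{Donald2015Embedding} (one implication implicit), and Issa--McCoy \cite{Issa2018bounding_definite} (a derivation from Theorem~\ref{thm:emb_ineq_eq_case}). So there is no in-paper proof to compare against; the only question is whether your argument is valid.

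Your sufficiency argument has a genuine gap. Proposition~\ref{prop:weak_slicing_Montesinos} has the hypothesis that \emph{at most one of the $p_i$ is even}; without this the Montesinos link need not have two components, the proposition does not apply, and the branched-cover-over-two-disks construction breaks down. The theorem, however, allows arbitrarily many even $p_i$ (e.g.\ $S^2(0;2,-2,4,-4)$ must bound a $\Q H_*(S^1\times B^3)$). A direct construction that avoids this issue is the one in Lemma~\ref{lem:S1xB3construction}: take $M'$ to be $S^2(0;\tfrac{p_1}{q_1},\dots,\tfrac{p_k}{q_k})$ with a regular fibre neighbourhood deleted, and set $W=M'\times[0,1]$; the double of $M'$ is $Y$, and a Mayer--Vietoris computation (using $b_1(M)=0$ since $\varepsilon(M)=-\sum q_i/p_i<0$) shows $M'$ is a $\Q H_*(S^1\times D^2)$, hence $W$ is a $\Q H_*(S^1\times B^3)$, with no parity restriction needed.

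Your necessity sketch is on the right track---form $Z=X_\Gamma\cup_Y(-W)$, apply Donaldson, and feed the resulting lattice embedding into Theorem~\ref{thm:emb_ineq_eq_case} (this is exactly how Lemma~\ref{lem:matrix_factorization} extracts the pairing of arms). However, you overstate the conclusion: the lattice argument forces the arms to pair up as $\tfrac{p}{q}$ with $\tfrac{p}{p-q}$, but it does \emph{not} force the $p_i$'s of distinct pairs to be coprime. That coprimality is the conclusion of Theorem~\ref{thm:embedding}, not of the present theorem, and it requires the additional input of the second piece $U_2$ coming from an embedding into a $\Z H_*(S^1\times S^3)$ (cf.\ Lemma~\ref{lem:tors_condition} and Lemma~\ref{lem:obstruction_applied}). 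For Theorem~\ref{thm:boundingS1timesB3} you should stop at the pairing statement.
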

\begin{rem}\label{rem:alternateforms} We make two comments on the form of the space $Y$ appearing in Theorem~\ref{thm:boundingS1timesB3}.
\begin{enumerate}[(i)]
    \item\label{it:pair_exchange} Whenever we have a pair of invariants $\frac{p_i}{q_i}$ and $-\frac{p_i}{q_i}$, we can replace these with $-\frac{p_i}{p_i-q_i}$ and $\frac{p_i}{p_i-q_i}$ without changing the homeomorphism type of $Y$, thus we can further assume that the coefficients rational numbers $\frac{p_i}{q_i}$ in Theorem~\ref{thm:boundingS1timesB3} satisfy $\frac{p_i}{q_i}\geq 2$.
    \item\label{it:standard_form} The standard presentation for $Y$  in Theorem~\ref{thm:boundingS1timesB3} is
\[
Y \cong S^2\left( k;  \frac{p_1}{q_1},  \frac{p_1}{p_1-q_1}, \dots,  \frac{p_k}{q_k},  \frac{p_k}{p_k-q_k}  \right)
\]
where $p_i/q_i>1$ for all $i$.
\end{enumerate}
\end{rem}
The following theorem on lattice embeddings will be useful.
\begin{thm}[Theorem 6 of \cite{Issa2018bounding_definite}] \label{thm:emb_ineq_eq_case}
Let $\iota : (\Z^{|\Gamma'|}, Q_{\Gamma'}) \rightarrow (\Z^m, \mbox{Id})$ be a lattice embedding, where $m > 0$ and $\Gamma'$ is a disjoint union of weighted linear chains representing fractions $\frac{p_1}{q_1}, \ldots, \frac{p_n}{q_n} \in \Q_{>1}$. Suppose that there is a unit vector $w \in (\Z^m, \mbox{Id})$ which pairs non-trivially with (the image of) the starting vertex of each linear chain. Then
 \[
 \sum_{i=1}^n \frac{q_i}{p_i} \leq 1.
 \]
% Moreover, if we have equality then $w$ has pairing $\pm 1$ with the starting vertex of each linear chain.
\end{thm}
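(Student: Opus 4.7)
The plan is to orthogonally decompose the unit vector $w$ across the images of the chains and then reduce the theorem to a single-chain inequality for a chain-inverse quadratic form. Since distinct linear chains in $\Gamma'$ are pairwise orthogonal and $\iota$ preserves the intersection form, the images $V_i := \iota(\Gamma'_i)$ sit in mutually orthogonal sublattices of $(\Z^m,\mathrm{Id})$. Letting $w_i \in V_i \otimes \Q$ be the orthogonal projection of $w$ onto $V_i \otimes \Q$, Pythagoras gives $1 = |w|^2 \geq \sum_i |w_i|^2$. Thus the theorem reduces to the single-chain bound $|w_i|^2 \geq q_i/p_i$. Setting $b_j := w \cdot \iota(v_j^i)$, which is an integer because both vectors live in $\Z^m$ and which is nonzero for $j=1$ by hypothesis, and using $w - w_i \perp V_i$, one finds $|w_i|^2 = b^T Q_i^{-1} b$. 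So what remains is the key lemma: \emph{for the chain matrix $Q$ of $[a_1,\ldots,a_h]^-$ with $a_j \geq 2$ representing $p/q$, and any $b \in \Z^h$ with $b_1 \neq 0$, one has $b^T Q^{-1} b \geq q/p$.}

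To prove the key lemma I would use the explicit formula $(Q^{-1})_{jk} = P_{\min(j,k)-1}\,\tilde P_{\max(j,k)+1}/p$, where $P_\ell = \det[a_1,\ldots,a_\ell]^-$ and $\tilde P_\ell = \det[a_\ell,\ldots,a_h]^-$ are the forward and reverse truncated numerators (so $p = P_h$ and $q = \tilde P_2$, with the standard conventions $P_0 = \tilde P_{h+1} = 1$, $P_{-1} = \tilde P_{h+2} = 0$). Introducing the integer partial sums $R_j := \sum_{k \leq j} b_k P_{k-1}$ and the weights $\alpha_j := \tilde P_{j+1}/P_{j-1}$ (strictly decreasing in $j$), an Abel summation yields
\[
p \cdot b^T Q^{-1} b \;=\; \sum_{j=1}^{h} (\alpha_j - \alpha_{j+1})\, R_j^2, \qquad \alpha_{h+1} := 0,
\]
with strictly positive weights telescoping to $\alpha_1 = q$. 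If every $R_j$ is nonzero, then $R_j \in \Z \setminus \{0\}$ gives $R_j^2 \geq 1$, so the right-hand side is at least $q$; this yields $b^T Q^{-1} b \geq q/p$ as required.

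The main obstacle is handling the case where some $R_j$ vanishes. Let $j_0 \geq 2$ be the smallest such index (so $R_{j_0-1} \neq 0$); then the equation $R_{j_0-1} + b_{j_0} P_{j_0-1} = 0$ with $b_{j_0} \in \Z$ forces $P_{j_0-1} \mid R_{j_0-1}$, whence $R_{j_0-1}^2 \geq P_{j_0-1}^2$. Combined with the continued-fraction identity $\tilde P_j P_{j-1} - \tilde P_{j+1} P_{j-2} = p$ (which I would prove by induction on $j$ from the recursions $P_k = a_k P_{k-1} - P_{k-2}$ and $\tilde P_j = a_j \tilde P_{j+1} - \tilde P_{j+2}$), the single term $(\alpha_{j_0-1} - \alpha_{j_0}) R_{j_0-1}^2 \geq p\, P_{j_0-1}/P_{j_0-2} \geq p \geq q$ already suffices on its own (all other terms in the Abel sum being nonnegative). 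Establishing this identity cleanly and verifying the strict positivity of the weights is the technical heart of the argument that I anticipate as the main obstacle.
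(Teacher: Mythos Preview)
The paper does not prove this statement: it is quoted verbatim as Theorem~6 of \cite{Issa2018bounding_definite} and used as a black box, so there is no in-paper argument to compare against.

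That said, your proposal is a correct and self-contained proof. The orthogonal decomposition of $w$ across the chain sublattices is valid because $\iota$ is form-preserving and the chains are disjoint in $\Gamma'$; the identification $|w_i|^2 = b^T Q_i^{-1} b$ is the standard Gram-matrix computation for orthogonal projection. Your key single-chain lemma is the heart of the matter, and the Abel-summation identity
\[
p\, b^T Q^{-1} b \;=\; \sum_{j=1}^{h} (\alpha_j - \alpha_{j+1}) R_j^2,\qquad \alpha_j-\alpha_{j+1}=\frac{p}{P_{j-1}P_j}>0,
\]
checks out (and one verifies it easily for small $h$). The treatment of the degenerate case where some $R_{j_0}=0$ is also fine: the divisibility $P_{j_0-1}\mid R_{j_0-1}$ together with the identity $\tilde P_{j}P_{j-1}-\tilde P_{j+1}P_{j-2}=p$ gives $(\alpha_{j_0-1}-\alpha_{j_0})R_{j_0-1}^2 \geq p\,P_{j_0-1}/P_{j_0-2} > p > q$, and all remaining terms are nonnegative because the weights are positive and the $R_j^2\geq 0$. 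The continued-fraction identity you flag as the ``main obstacle'' is a routine induction on the two recursions and poses no real difficulty. In short, your approach is sound; it gives an elementary, purely linear-algebraic route to the inequality without appealing to the cited reference.
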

Our key application of Theorem~\ref{thm:emb_ineq_eq_case} is through the following lemma. 
\begin{lem}\label{lem:matrix_factorization}
Let $Y$ be a Seifert fibered space over $S^2$ in the form 
\[Y= S^2\left( \ell;  \frac{p_1}{q_1}, \dots, \frac{p_{2\ell}}{q_{2\ell}}  \right)\] 
with $\varepsilon(Y)=0$.
Suppose the matrix $Q$ admits as factorization of the form $Q= A^T A$, where $A$ is an integer matrix.
Then up to reordering columns and multiplying the columns by -1, the row corresponding to
the central vertex takes the form
\[
(\underbrace{1, \dots, 1}_{\ell}, 0, \dots,0).
\]
For each $i=1, \dots, \ell$, let $C_i\subseteq \{1, \dots, 2 \ell\}$ be the set of indices such that $j\in C_i$ if and only if a vertex on the $j$th arm has non-zero entry in its $i$th column. Then the $C_i$ form a partition of the $\{1,\dots, 2\ell\}$ into classes such that each class contains two elements and for any $i$ we have that if $C_i=\{\alpha, \beta\}$, then $\frac{q_\alpha}{p_\alpha}+\frac{q_\beta}{p_\beta}=1$.
\end{lem}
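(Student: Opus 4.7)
The plan is to extract information from the factorization $Q=A^TA$ by analyzing the column $a_c\in\Z^r$ of $A$ indexed by the central vertex $c$ and applying Theorem~\ref{thm:emb_ineq_eq_case} to the sub-plumbings corresponding to each coordinate appearing in $a_c$. Writing $a_v$ for the column of $A$ corresponding to vertex $v$, the factorization gives $a_v\cdot a_{v'}=Q_{vv'}$; in particular $\|a_c\|^2=\ell$ and $a_c\cdot a_{v_1^j}=-1$ for the starting vertex $v_1^j$ of each arm. Since $a_c$ has integer entries, the support $S\subseteq\{1,\dots,r\}$ of $a_c$ has $|S|\le\ell$, with equality exactly when every non-zero entry of $a_c$ is $\pm1$.

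For each $k\in S$, set $J_k=\{j:(a_{v_1^j})_k\neq 0\}$. Because $a_c\cdot a_{v_1^j}=-1$ is non-zero, every arm lies in some $J_k$. The standard basis vector $e_k$ pairs non-trivially with the starting vertex of each arm in $J_k$, so Theorem~\ref{thm:emb_ineq_eq_case} applied to these arms embedded in $(\Z^r,\mathrm{Id})$ gives $\sum_{j\in J_k}q_j/p_j\le 1$. The hypothesis $\varepsilon(Y)=0$ translates to $\sum_{j=1}^{2\ell}q_j/p_j=\ell$. Combining these,
\[
\ell=\sum_{j=1}^{2\ell}\frac{q_j}{p_j}\le\sum_{k\in S}\sum_{j\in J_k}\frac{q_j}{p_j}\le|S|\le\ell,
\]
and the forced equalities yield three conclusions simultaneously: $|S|=\ell$ (so every non-zero entry of $a_c$ equals $\pm1$); $\sum_{j\in J_k}q_j/p_j=1$ for every $k\in S$; and each arm lies in exactly one $J_k$, so $\{J_k\}_{k\in S}$ partitions $\{1,\dots,2\ell\}$. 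Permuting and negating coordinates of $\Z^r$ (equivalently, applying a signed permutation to the rows of $A$) then puts $a_c$ in the form $(1,\dots,1,0,\dots,0)^T$ with $\ell$ ones.

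Since $p_j/q_j>1$, each $q_j/p_j\in(0,1)$, so $\sum_{j\in J_k}q_j/p_j=1$ requires $|J_k|\ge 2$; with $\sum_k|J_k|=2\ell$ distributed among $\ell$ classes, this forces $|J_k|=2$ for every $k$. It remains to identify $C_k$ with $J_k$: the inclusion $J_k\subseteq C_k$ is automatic, and the reverse direction—that no non-starting vertex on an arm $j\in J_{k'}$ has a non-zero entry in a coordinate $k\in S\setminus\{k'\}$—is where I expect the main technical work to lie. The plan is to exploit the tightness of the equality case of Theorem~\ref{thm:emb_ineq_eq_case}: if some arm $j\in J_{k'}$ had a later vertex $v_m^j$ with $(a_{v_m^j})_k\neq 0$, then a suitable truncation of the sub-chain $v_m^j,\dots,v_{h_j}^j$ would pair with $e_k$ non-trivially at its starting vertex, and adjoining this chain to the arms in $J_k$ would strictly raise $\sum q/p$ past the already-attained bound $1$, a contradiction. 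Once $C_k=J_k$ is established, the conclusions—that the $C_k$ partition $\{1,\dots,2\ell\}$ into classes of size two, and that $q_\alpha/p_\alpha+q_\beta/p_\beta=1$ for $C_k=\{\alpha,\beta\}$—are immediate from the earlier analysis.
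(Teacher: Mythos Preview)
Your proposal is correct and follows essentially the same approach as the paper's proof. The only minor difference is in the final step (showing $C_k=J_k$): where you propose using a truncated sub-chain of arm $j$ starting at $v_m^j$, the paper simply treats the single vertex $v_m^j$ as a one-vertex linear chain (with fraction $w(v_m^j)$, hence reciprocal $1/w(v_m^j)>0$), which is the simplest instance of your idea and immediately gives the contradiction $1+1/w(v_m^j)\le 1$.
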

\begin{proof}
By rearranging the columns and multiplying the columns by $-1$ as necessary we can assume that the row corresponding to the central vertex takes the form
\[
(c_1, \dots, c_{\ell'}, 0 ,\dots, 0)
\]
where the $c_i\geq 1$ and $\ell' \leq \ell$. Now for each $i=1, \dots, \ell'$, let $C_i'$ be the set of arms for which the lead vertex has a non-zero coefficient in the $i$th column. Now since the leading vertex of each arm pairs non-trivially with the central vertex, every arm appears in at least one $C_i'$. Now Theorem~\ref{thm:emb_ineq_eq_case} implies that for each $C_i'$ we have
\[
\sum_{j \in C_i'} \frac{q_j}{p_j} \leq 1.
\]
However as the $\{1, \dots, 2\ell\}= \cup_{i=1}^{\ell'} C_i'$ we have that
\[
\ell=\sum_{i=1}^{2\ell} \frac{q_i}{p_i}\leq \sum_{i=1}^{\ell'} \sum_{j \in C_i'} \frac{q_j}{p_j}  \leq \ell' \leq \ell.
\]
Thus all these inequalities must in fact be equalities. This implies that $\ell'=\ell$, which in turn implies that $c_1=\dots= c_\ell =1$. This implies that the row corresponding to the central vertex is in the required form. Secondly it implies that the $C_i'$ are in fact a partition, in particular, every $j$ appears in precisely one of the $C_i$. Finally, it implies that for each of the $C_i'$ we have
\begin{equation}\label{eq:partition_equality}
\sum_{j \in C_i'} \frac{q_i}{p_i} = 1.
\end{equation}
However the $C_i'$ are partitioning $2\ell$ arms amongst $\ell$ classes. For \eqref{eq:partition_equality} to hold we see that each class must contain at least two elements. Thus each $C_i'$ contains precisely two elements and their reciprocals sum to one. 

Notice that by construction we have $C_i' \subseteq C_i$; the $C_i'$ were constructed by considering the arms for which the leading vertices had a non-zero coefficient in the $i$th column, whereas the $C_i$ are the arms with some vertex with a non-zero coefficient in the $i$th column. We finish the proof by showing that $C_i = C_i'$ for all $i$. 

Suppose otherwise, then we would we have a vertex $v$ on the $j$th arm where $j\not\in C_i'$ such that the $i$th column of the row corresponding to $v$ is non-zero. 
However, we can view the vertex $v$ as a linear chain in its own right. So if $C_i' = \{\alpha, \beta\}$, then Theorem~\ref{thm:emb_ineq_eq_case} would imply that
\[\frac{q_\alpha}{p_\alpha}+\frac{q_\beta}{p_\beta}+ \frac{1}{w(v)}= 1+ \frac{1}{w(v)}\leq 1,
\]
which is impossible.
\end{proof}
\begin{figure}[!ht]
\vspace{10pt}
  \begin{overpic}[height=100pt]{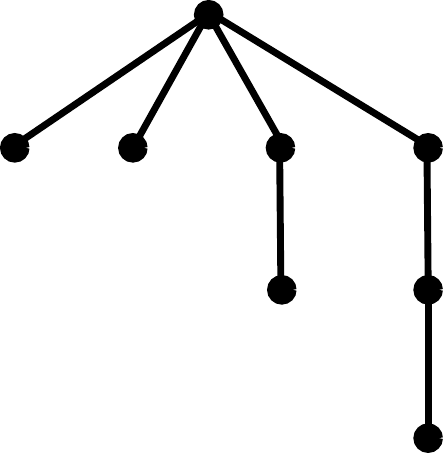}
    \put (45, 102) {$2$}
    \put (-8, 62) {$2$}
    \put (18, 62) {$2$}
    \put (50, 32) {$3$}
    \put (50, 62) {$3$}
    \put (99, 0) {$2$}
    \put (99, 32) {$3$}
    \put (99, 62) {$2$}
  \end{overpic}
  \caption{The semi-definite plumbing for $S^2 \left(2; \frac{2}{1}, \frac{2}{1}, \frac{8}{3}, \frac{8}{5} \right)$}
  \label{fig:example_plumbing}
\end{figure}
\begin{exam}\label{ex:main}
Consider the Seifert-fibered space $S^2 \left(2; \frac{2}{1}, \frac{2}{1}, \frac{8}{3}, \frac{8}{5} \right)$. This arise as the boundary of the positive semi-definite plumbing displayed in Figure~\ref{fig:example_plumbing}. The intersection form is represented by the matrix:
\[
Q= \begin{pmatrix}
2 &-1&-1& -1&  & -1&  &  \\
-1&2 &  &   &  &   &  &  \\
-1&  & 2&   &  &   &  &  \\
-1&  &  & 3 &-1&   &  &  \\
  &  &  &-1 & 3&   &  &   \\ 
-1&  &  &   &  & 2 &-1&   \\
  &  &  &   &  &-1 & 3&-1 \\
  &  &  &   &  &   &-1& 2 \\
\end{pmatrix}
\]
The vector $v_0$ satisfying $v_0 Q =0$ constructed in Lemma~\ref{lem:kernel_properties} takes the form
\[
v_0=(8, 4, 4, 3, 1 , 5 ,2, 1).
\]

In this case, it turns out that there is an essentially unique candidate for the matrix $A^T$ satisfying $A^T A =Q$:
\[
A^T= \begin{pmatrix}
1 &1 &  &   &  &  &    \\
-1&  &1 &   &  &   &   \\
-1&  &-1&   &  &   &    \\
  &-1&  & 1 &1 &   &    \\
  &  &  &   &-1&1  &1    \\ 
  &-1&  &-1 &  &   &     \\
  &  &  &1  &-1 &-1&    \\
  &  &  &   &  &  1&-1 \\
\end{pmatrix}.
\]
Notice that the central vertex corresponds to the first row in $A^T$ and consistent with Lemma~\ref{lem:matrix_factorization}, the arms with non-zero entries in the first column correspond to the fractions $2/1$ and $2/1$ and the arms with non-zero entries in the second row correspond to $8/3$ and $8/5$.
\end{exam}

\subsection{Homological properties of embeddings}\label{sec:homological_material}
Next we recall some homological results concerning embeddings of manifolds.
\begin{lem}\label{lem:splitting_principle}
Suppose that $Y$ is a connected oriented $3$-manifold that smoothly embeds into a connected oriented $4$-manifold $Z$. If the induced map $H_1(Y;\Z)\rightarrow H_1(Z;\Z)$ is surjective, then $Y$ separates $Z$ into two components.
\end{lem}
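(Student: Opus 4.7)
My approach is to argue by contradiction: assuming $Y$ does not separate $Z$, I will construct a cohomology class $u \in H^1(Z;\Z)$ that vanishes on the image of $i_\ast \colon H_1(Y;\Z) \to H_1(Z;\Z)$ but is nonzero on some class, contradicting the surjectivity hypothesis.

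First I will observe that $Y$ is two-sided in $Z$, since its normal line bundle is orientable (both $TZ|_Y$ and $TY$ are oriented), hence trivial, so a tubular neighborhood splits as $N(Y) \cong Y \times (-1,1)$. Using excision together with the Thom class of this trivial line bundle, I will define
\[
u \in H^1(Z;\Z) \quad\text{as the image of the generator of}\quad H^1\!\left(N(Y), N(Y) \setminus Y;\Z\right) \cong H^1(Z, Z \setminus Y;\Z)
\]
under the long exact sequence map $H^1(Z, Z\setminus Y;\Z) \to H^1(Z;\Z)$. The key property of $u$ is that for any $1$-cycle $\gamma$ in $Z$ transverse to $Y$, $u([\gamma])$ equals the algebraic intersection number $\gamma \cdot Y$.

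The main claim is that $u$ vanishes on $i_\ast H_1(Y;\Z)$. Given any $\alpha \in H_1(Y;\Z)$, represent it by a loop in $Y$ and use the product structure of $N(Y)$ to isotope it to a homologous loop $\alpha' \subset Z \setminus Y$. Since $u$ is, by construction, the image of a class in $H^1(Z, Z\setminus Y;\Z)$, exactness of the long exact sequence of the pair $(Z, Z\setminus Y)$ gives that $u$ restricts to zero on $Z \setminus Y$, so $u([\alpha']) = 0$ and therefore $u(i_\ast \alpha) = 0$. If $i_\ast$ is surjective then $u$ annihilates $H_1(Z;\Z)$, and since $H_0(Z;\Z) = \Z$ is free, the universal coefficient theorem gives $H^1(Z;\Z) \cong \Hom(H_1(Z;\Z),\Z)$, forcing $u = 0$.

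To derive the contradiction, I will use the failure of $Y$ to separate: then $Z \setminus Y$ is connected, so a point on the positive side of $Y$ in $N(Y)$ can be joined to a point on the negative side by a path in $Z \setminus Y$; closing this path up with a short transverse arc through $Y$ produces a loop $\gamma$ with $\gamma \cdot Y = \pm 1$, and hence $u([\gamma]) = \pm 1 \neq 0$, contradicting $u=0$. The only mildly delicate step is verifying that $\alpha'$ is genuinely homologous to $\alpha$ in $Z$; this is immediate from the isotopy $Y \times \{0\} \simeq Y \times \{1/2\}$ inside $N(Y)$. I do not expect to need $Z$ to be closed, since the class $u$ is defined purely via the local Thom class of $Y \subset Z$.
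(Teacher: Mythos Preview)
Your proof is correct and follows essentially the same approach as the paper: both argue by contradiction, use the triviality of the normal bundle, build a loop $\gamma$ meeting $Y$ transversely once from the failure of separation, and observe that any class coming from $Y$ can be pushed off $Y$ and hence has zero intersection with it. The paper phrases this as a nontrivial pairing of $[\gamma]$ with $[Y]\in H_3(Z;\Z)$, while you package the same intersection number as evaluation of the Thom/dual class $u\in H^1(Z;\Z)$; your version has the mild advantage of making the argument manifestly valid without assuming $Z$ is closed.
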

\iffalse
\begin{proof}[Proof~1]
Since $Y$ and $Z$ are oriented the normal bundle of $Y$ in $Z$ is trivial.  Now we can take infinitely many copies of $Z$ cut along $Y$. Each of the copies has two boundary components which are copies of $Y$. By gluing these cut copies together along those boundary components we can construct an infinite cyclic cover of $Z$. This corresponds to some surjective homomorphism $h:\pi_1(Z) \rightarrow \Z$. However by construction this cover restricts to a trivial cover of $Y$. Thus we see that $\pi_1(Y)$ maps into $\ker h$. Thus abelianizing, we get the desired result.  
\end{proof}\fi
\begin{proof}
Since $Y$ and $Z$ are orientable the normal bundle $\nu Y$ of $Y$ in $Z$ is trivial. If $Y$ does not separate $Z$, then we can find an arc in $Z \setminus Y$ that connects the two components of $\nu Y \setminus Y$. This gives a closed curve $\gamma$ in $Z$ that intersects $Y$ transversely in a single point. Thus the homology class represented by $\gamma$ has non-trivial pairing with the class $[Y]\in H_3(Z;\Z)$. Hence we see that $\gamma$ cannot be in the image of $H_1(Y;\Z)$ in $H_1(Z;\Z)$.
\end{proof}

Next we study the effect of embedding $\Q H_*(S^1\times S^2)$s into $\Z H_*(S^1 \times S^3)$s.
\begin{lem}\label{lem:Ui_properties}
Suppose that $Y$ is a $\Q H_*(S^1\times S^2)$ which embeds into $Z$, a $\Z H_*(S^1 \times S^3)$, and the map $H_1(Y; \Z) \rightarrow H_1(Z;\Z)$ induced by inclusion is surjective. Then $Z$ can be decomposed as $Z= U_1 \cup_Y U_2$, where $U_1$ and $U_2$ are submanifolds $\partial U_1 \cong -\partial U_2 \cong Y$ with the following properties: 
\begin{enumerate}[(i)]
\item\label{enum:H1Z_surj} inclusion induces a surjection $H_1(Y;\Z) \rightarrow H_1(U_i;\Z)$;
\item\label{enum:H1Q_iso} inclusion induces an isomorphism $H_1(Y;\Q) \rightarrow H_1(U_i;\Q)$;
\item\label{enum:H3Z_zero} $H_3(U_i; \Z)=0$;
\item\label{enum:H2Z_zero} $H_2(U_i; \Z)=0$;
\item\label{enum:H2tors_iso} The map 
\[H^2(U_1;\Z) \oplus H^2(U_2;\Z) \rightarrow \tor H^2(Y;\Z)\]
induced by inclusion is an isomorphism.
\end{enumerate}
In particular the $U_i$ are both  $\Q H_*(S^1\times B^3)$s. 
\end{lem}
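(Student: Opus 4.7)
The plan is to split $Z$ along $Y$ using Lemma~\ref{lem:splitting_principle} and then extract all the required homological information from the resulting Mayer-Vietoris sequence. Since $H_1(Y;\Z)\to H_1(Z;\Z)$ is surjective by hypothesis, Lemma~\ref{lem:splitting_principle} applies and produces a decomposition $Z = U_1\cup_Y U_2$ in which each $U_i$ is a connected compact orientable $4$-manifold with boundary $Y$ (with one of the orientations reversed); $U_i$ is connected because $Y$ is and $U_i$ is the closure of a single component of $Z\setminus Y$.

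The central step is to establish properties \eqref{enum:H3Z_zero} and \eqref{enum:H2Z_zero}. Since $H_*(Z;\Z)\cong H_*(S^1\times S^3;\Z)$, $Y$ is a closed orientable $3$-manifold with $b_1(Y)=1$ (so $H_2(Y;\Z)\cong\Z$ by Poincar\'e duality), and $H_4(U_i;\Z)=0$ (as $U_i$ is compact orientable with non-empty boundary), the relevant segment of the Mayer-Vietoris sequence takes the form
\[
0\to H_4(Z)\xrightarrow{\partial_4} H_3(Y)\to H_3(U_1)\oplus H_3(U_2)\to H_3(Z)\xrightarrow{\partial_3} H_2(Y)\to H_2(U_1)\oplus H_2(U_2)\to 0.
\]
The connecting map $\partial_4\colon\Z\to\Z$ is an isomorphism by chasing fundamental classes: $\partial_4[Z]=[\partial U_1]=[Y]$. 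The map $\partial_3$ is the crux. I would argue geometrically via Poincar\'e duality: take a closed $3$-submanifold $\Sigma$ representing a generator of $H_3(Z)$ and transverse to $Y$, so that $\partial_3[\Sigma]=[\Sigma\cap Y]$. Surjectivity of $H_1(Y;\Z)\to H_1(Z;\Z)$ furnishes a class $\gamma\in H_1(Y;\Z)$ whose image in $H_1(Z;\Z)=\Z$ is a generator; since $[\Sigma]$ is Poincar\'e dual to a generator of $H^1(Z;\Z)$, the intersection pairing in $Z$ gives $[\Sigma]\cdot\gamma=\pm 1$, and by naturality of intersection products $[\Sigma\cap Y]\cdot\gamma=\pm 1$ computed in $Y$. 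Hence $[\Sigma\cap Y]$ generates $H_2(Y;\Z)$, and $\partial_3$ is an isomorphism. Exactness now forces $H_3(U_i;\Z)=0$ and $H_2(U_i;\Z)=0$, proving \eqref{enum:H3Z_zero} and \eqref{enum:H2Z_zero}.

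With these vanishings in hand, universal coefficients give $H^3(U_i;\Z)=0$, and Poincar\'e-Lefschetz duality yields $H_1(U_i,Y;\Z)\cong H^3(U_i;\Z)=0$. The long exact sequence of the pair $(U_i,Y)$ then implies \eqref{enum:H1Z_surj}. Tensoring with $\Q$ gives $\dim_\Q H_1(U_i;\Q)\leq 1$; conversely, factoring the surjection $H_1(Y;\Z)\to H_1(Z;\Z)$ through $H_1(U_i;\Z)$ shows $H_1(U_i;\Q)\to H_1(Z;\Q)=\Q$ is surjective, so $\dim_\Q H_1(U_i;\Q)\geq 1$. Equality then forces $H_1(Y;\Q)\to H_1(U_i;\Q)$ to be a surjection of one-dimensional spaces, establishing \eqref{enum:H1Q_iso}. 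Combining \eqref{enum:H1Q_iso}, \eqref{enum:H3Z_zero}, \eqref{enum:H2Z_zero} with $H_4(U_i;\Z)=0$ shows each $U_i$ is a $\Q H_*(S^1\times B^3)$.

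Finally, for \eqref{enum:H2tors_iso} I would use the cohomology Mayer-Vietoris sequence. Since $H^2(Z;\Z)=0$ (a consequence of $H_2(Z;\Z)=0$ and $H_1(Z;\Z)$ being torsion-free), it reduces to
\[
0\to H^2(U_1;\Z)\oplus H^2(U_2;\Z)\to H^2(Y;\Z)\xrightarrow{\delta} H^3(Z;\Z).
\]
The map $\delta$ is Poincar\'e dual to $\partial_3$ and is therefore an isomorphism on the free part of $H^2(Y;\Z)=\Z\oplus\tor H^2(Y;\Z)$; hence its kernel is precisely $\tor H^2(Y;\Z)$, yielding the desired isomorphism. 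The principal difficulty in this outline is the identification of $\partial_3$ as an isomorphism, as that is where the surjectivity hypothesis enters in an essential way through Poincar\'e duality.
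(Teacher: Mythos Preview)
Your argument is correct and proceeds in essentially the reverse order to the paper's: you establish \eqref{enum:H3Z_zero} and \eqref{enum:H2Z_zero} first via a direct geometric analysis of the Mayer--Vietoris boundary $\partial_3$, and then deduce \eqref{enum:H1Z_surj} from these vanishings, whereas the paper first proves \eqref{enum:H1Z_surj} from the $H_1$ Mayer--Vietoris sequence and then obtains \eqref{enum:H3Z_zero} via $H_1(U_i,Y)=0$ and Lefschetz duality, and \eqref{enum:H2Z_zero} by combining the short exact sequence $0\to H_3(Z)\to H_2(Y)\to H_2(U_1)\oplus H_2(U_2)\to 0$ with the observation that $H_2(U_i)$ is torsion-free. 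Your intersection-theoretic identification of $\partial_3$ is a pleasant alternative that makes the role of the surjectivity hypothesis very explicit; the paper's route is slightly more algebraic but avoids the transversality bookkeeping.

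One small terminological point on \eqref{enum:H2tors_iso}: the relationship you invoke between $\delta$ and $\partial_3$ is Kronecker (universal-coefficient) duality, $\langle\delta\alpha,\beta\rangle=\pm\langle\alpha,\partial_3\beta\rangle$, rather than Poincar\'e duality; under Poincar\'e duality $\delta$ corresponds instead to the inclusion-induced map $H_1(Y)\to H_1(Z)$. Either identification gives the conclusion you want. Alternatively, since you have already shown $H^3(U_i;\Z)=0$, continuing the cohomology Mayer--Vietoris sequence one step further immediately shows $\delta$ is surjective onto $H^3(Z)\cong\Z$, hence an isomorphism on free parts; this matches in spirit the paper's argument, which simply notes that $H^2(U_i)$ is pure torsion (because $H_2(U_i)=0$) while $H^3(Z)$ is torsion-free.
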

\begin{proof}
By Lemma~\ref{lem:splitting_principle}, $Y$ separates $Z$ into two components with the required boundary components. We establish the necessary homological properties.

We have the exact sequence from Mayer-Vietoris:
\begin{equation}\label{eq:MV_H_1}
0\rightarrow H_1(Y;\Z) \rightarrow H_1(U_1;\Z)\oplus H_1(U_2;\Z) \rightarrow H_1(Z;\Z) \rightarrow 0.
\end{equation}
Since $H_1(Z; \Z)$ is torsion free, we see that $\tor H_1(U_1;\Z)\oplus \tor H_1(U_2;\Z)$ is contained in the image of $H_1(Y;\Z)$.

Since $b_1(Y)=b_1(Z)=1$, we see that $b_1(U_1)+b_1(U_2)=2$. However the surjection $H_1(Y;\Z)\rightarrow H_1(Z;\Z)$ factors as
\[
H_1(Y;\Z)\rightarrow H_1(U_i;\Z) \rightarrow H_1(Z;\Z).
\]
This shows that $b_1(U_1)=b_1(U_2)=1$ and hence that $H_1(Y;\Z)\rightarrow H_1(U_i;\Z)$ is a surjection. This establishes \eqref{enum:H1Z_surj}. Statement \eqref{enum:H1Q_iso} is established similarly, but working with $\Q$ coefficients instead.

Using \eqref{enum:H1Z_surj}, the long exact sequence of the pair $(U_i,Y)$ shows that $H_1(U_i, Y;\Z)=0$. By Lefschetz duality this implies that $H^3(U_i;\Z)=0$. Since $H_3(U_i;\Z)$ is torsion free, this implies that $H_3(U_i;\Z)=0$. By Universal coefficients we see also that $H_2(U_i; \Z)$ is torsion free.

%Therefore the long exact sequence of the pair shows that $H_1(U_i,Y;\Q)=0$. By Lefschetz duality, this implies that $H^3(U;\Q)=0$ and hence by using universal coefficients that $H_3(U;\Z)=0$.

Now consider the following piece of the Mayer-Vietoris sequence
\[
0 \rightarrow H_3(Z;\Z) \rightarrow H_2(Y;\Z) \rightarrow H_2(U_1;\Z)\oplus H_2(U_2;\Z) \rightarrow 0.
\]
Since $H_3(Z;\Z)$ and $H_2(Y;\Z)$ are both isomorphic to $\Z$. This implies that $H_2(U_1;\Z)\oplus H_2(U_2;\Z)=0$, which establishes \eqref{enum:H2Z_zero}.

Finally, we establish \eqref{enum:H2tors_iso}. To do this, look at the following portion of the Mayer-Vietoris sequence for cohomology:
\begin{equation}\label{eq:MVH^2}
0 \rightarrow H^2(U_1;\Z)\oplus H^2(U_2;\Z) \rightarrow H^2(Y;\Z) \rightarrow H^3(Z;\Z).
\end{equation}
Since $H_2(U_i;\Z)=0$, the term $H^2(U_1;\Z)\oplus H^2(U_2;\Z)$ consists entirely of torsion. Thus the image of $H^2(U_1;\Z)\oplus H^2(U_2;\Z)$ in \eqref{eq:MVH^2} is entirely contained in $\tor H^2(Y;\Z)$. On the other hand $H^3(Z;\Z)\cong \Z$ is torsion-free, so $H^2(U_1;\Z)\oplus H^2(U_2;\Z)$ must surject onto $\tor H^2(Y;\Z)$.
\end{proof}

Finally, we recall the conditions for two 4-manifolds to glue to give a closed definite 4-manifold \cite[Proposition 7]{Issa2018bounding_definite}.

\begin{prop}\label{prop:def_gluing_thm}
Let $X_1$ and $X_2$ be 4-manifolds with $\partial X_1 = -\partial X_2 =Y$. Then the closed 4-manifold $Z=X_1 \cup_Y X_2$ is positive definite if and only if
\begin{enumerate}[(a)]
\item\label{enum:injcond} the inclusion-induced map 
\[
(i_1)_* \oplus (i_2)_* \colon H_1(Y; \Q) \rightarrow H_1(X_1; \Q)\oplus H_1(X_2; \Q)
\]
is injective and
\item\label{enum:sigeq} both $X_1$ and $X_2$ are positive semi-definite.%\[\sigma(U_i) = b_2(U_i) + b_1(U_i) - b_3(U_i) - b_2(Y).\]
\end{enumerate}\qed
\end{prop}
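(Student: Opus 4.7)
The plan is to combine Novikov additivity of the signature with a Mayer--Vietoris rank computation in order to reduce positive definiteness of $Z$ to a single numerical identity, which we then show is equivalent to \eqref{enum:injcond}. Throughout we work with $\Q$ coefficients so that the relevant homology groups are $\Q$-vector spaces and we may appeal freely to Lefschetz and Poincar\'e duality.

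The implication that $Z$ being positive definite forces \eqref{enum:sigeq} is the easy half. For each $i$, the inclusion $X_i \hookrightarrow Z$ induces a map $\phi_i \colon H_2(X_i;\Q) \to H_2(Z;\Q)$ satisfying $Q_{X_i}(x,y) = Q_Z(\phi_i(x), \phi_i(y))$ for all $x,y \in H_2(X_i;\Q)$. Thus $Q_{X_i}$ is the pullback along $\phi_i$ of the restriction of $Q_Z$ to $\im(\phi_i)$; since the latter is a restriction of a positive definite form it is positive definite, so $Q_{X_i}$ is positive semi-definite.

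Now assume \eqref{enum:sigeq} and prove that $Z$ is positive definite if and only if \eqref{enum:injcond} holds. Let $n_i = \dim \ker(H_1(Y;\Q) \to H_1(X_i;\Q))$; the long exact sequence of the pair $(X_i, Y)$ together with Lefschetz duality identifies $n_i$ with the nullity of $Q_{X_i}$, so under \eqref{enum:sigeq} we have $\sigma(X_i) = b_2(X_i) - n_i$. Since $Z$ is closed and oriented, $Q_Z$ is unimodular, so $Z$ is positive definite if and only if $\sigma(Z) = b_2(Z)$. Novikov additivity gives $\sigma(Z) = \sigma(X_1) + \sigma(X_2)$, while the Mayer--Vietoris sequence for $Z = X_1 \cup_Y X_2$ yields
\[
b_2(Z) = b_2(X_1) + b_2(X_2) - \rk(\alpha) + b_1(Y) - \rk(\delta),
\]
where $\alpha \colon H_2(Y;\Q) \to H_2(X_1;\Q) \oplus H_2(X_2;\Q)$ is the inclusion-induced map and $\delta$ is the map from \eqref{enum:injcond}. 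Combining these, positive definiteness of $Z$ reduces to the single identity
\[
\rk(\alpha) + \rk(\delta) = b_1(Y) + n_1 + n_2.
\]

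The main technical step, and the hardest part, is to show this identity is equivalent to $\delta$ being injective. Set $K_i = \ker(H_1(Y;\Q) \to H_1(X_i;\Q))$ and $L_i = \ker(H_2(Y;\Q) \to H_2(X_i;\Q))$; the crucial input is that Poincar\'e duality on $Y$ combined with Lefschetz duality on each $(X_i, Y)$ identifies $L_i$ with the annihilator of $K_i$ under the intersection pairing $H_1(Y;\Q) \otimes H_2(Y;\Q) \to \Q$. Granting this, a direct calculation gives $\rk(\alpha) = \dim(K_1 + K_2)$, $\rk(\delta) = b_1(Y) - \dim(K_1 \cap K_2)$, and $n_i = \dim K_i$, so the displayed identity collapses by inclusion--exclusion to $\dim(K_1 \cap K_2) = 0$, which is precisely the injectivity of $\delta$, i.e. condition \eqref{enum:injcond}.
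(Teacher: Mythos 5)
Your proposal is correct, but note that there is nothing in the paper to compare it against: the paper states this proposition with a \qed and imports it wholesale from \cite[Proposition~7]{Issa2018bounding_definite}, treating it as a black box. Your argument is a complete and correct proof along the standard lines of that source: the easy direction (definiteness of $Z$ pulls back to semi-definiteness of each $X_i$ since $Q_{X_i}(x,y)=Q_Z(\phi_i x,\phi_i y)$) is right, Novikov additivity applies since the gluing is along the entire common boundary, and I verified your Mayer--Vietoris rank formula $b_2(Z)=b_2(X_1)+b_2(X_2)-\rk(\alpha)+b_1(Y)-\rk(\delta)$ as well as the final inclusion--exclusion collapse of $\rk(\alpha)+\rk(\delta)=b_1(Y)+n_1+n_2$ to $\dim(K_1\cap K_2)=0$.

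Two small points of care, neither a genuine gap. First, the one ingredient you assert rather than prove, the annihilator identity $L_i=\operatorname{Ann}(K_i)$, is indeed a standard consequence of Lefschetz duality with field coefficients: $K_i=\operatorname{im}\bigl(\partial\colon H_2(X_i,Y;\Q)\to H_1(Y;\Q)\bigr)$, and for $u\in H_2(X_i,Y;\Q)$, $y\in H_2(Y;\Q)$ one has $(\partial u)\cdot_Y y=\pm\, u\cdot_{X_i}\iota_* y$, so nondegeneracy of the relative pairing gives $y\in\operatorname{Ann}(K_i)$ if and only if $\iota_* y=0$; a sentence to this effect would make the proof self-contained. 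This same identity, together with the implicit use of $b_2(Y;\Q)=b_1(Y;\Q)$, is what upgrades your nullity computation (a priori the radical of $Q_{X_i}$ is $\operatorname{im}(H_2(Y;\Q)\to H_2(X_i;\Q))$, of dimension $b_1(Y)-\dim L_i$) to the claimed value $n_i=\dim K_i$, so its role comes earlier in your argument than where you state it. Second, ``$Q_Z$ is unimodular'' should read ``nondegenerate over $\Q$'' (unimodularity is the integral statement on $H_2(Z;\Z)/\mathrm{tors}$); only nondegeneracy is needed to conclude that $\sigma(Z)=b_2(Z)$ characterizes positive definiteness.
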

%Note that condition \eqref{enum:sigeq} of Proposition~\ref{prop:def_gluing_thm} is automatically satisfied for $X_i$ if it arises as a submanifold of a definite manifold. To verify \eqref{enum:injcond} it suffices to show one of the maps $(i_1)_*$ or $(i_2)_*$ is itself injective.

\subsection{Obstructing embeddings}\label{sec:embedding_thm}
For the duration of this section, we will take $Y$ to be a Seifert fibered space over $S^2$ that embeds smoothly into some $\Z H_*(S^1\times S^3)$, which we will call $Z$, and assume that the induced map $H_1(Y;\Z) \rightarrow H_1(Z;\Z)$ is surjective. The objective is to show that $Y$ is in the form required by Theorem~\ref{thm:embedding}\eqref{it:explicit_Description} and hence establish
the implication \eqref{it:embedding} $\Rightarrow$ \eqref{it:explicit_Description} in Theorem~\ref{thm:embedding}.
By Lemma~\ref{lem:Ui_properties}, the embedding of $Y$ into $Z$ decomposes $Z$ as $Z=U_1 \cup_Y -U_2$, where $U_1, U_2$ are $\Q H_*(S^1\times B^3)$s with $\partial U_1 \cong \partial U_2 \cong -Y$. 

By Theorem~\ref{thm:boundingS1timesB3}, the existence of $U_1$ implies that $Y$ takes the form:
\begin{equation}\label{eq:starting_form}
Y\cong S^2 \left(0; \left\{\frac{p_1}{q_1}, -\frac{p_1}{q_1}\right\}^{\times n_1}, \dots, \left\{\frac{p_k}{q_k}, -\frac{p_k}{q_k}\right\}^{\times n_k}  \right),
\end{equation}
where, for each $i$, we assume that $p_i$ and $q_i$ are coprime integers with $\frac{p_i}{q_i}\geq 2$ (cf. Remark~\ref{rem:alternateforms}\eqref{it:pair_exchange}); $n_i\geq 1$ for all $i$; and $p_i/q_i \neq p_j/q_j,$ whenever $i\neq j$. Thus, in order to establish that $Y$ is in form required by Theorem~\ref{thm:embedding}\eqref{it:explicit_Description} we need to show that $\gcd(p_i, p_j)=1$ whenever $i\neq j$.

By applying Rolfsen twists to the presentation in \eqref{eq:starting_form}, as discussed at the start of Section~\ref{sec:SFS_embedding}, we see that $Y$ can be written in the form
\begin{equation}\label{eq:pos_def_form}
Y\cong S^2 \left(e; \left\{\frac{p_1}{q_1}, \frac{p_1}{p_1-q_1}\right\}^{\times n_1}, \dots, \left\{\frac{p_k}{q_k}, \frac{p_k}{p_k-q_k}\right\}^{\times n_k}  \right),
\end{equation}
where $e=\sum_{i=1}^k n_i$ and $\left\{\frac{p_i}{q_i}, \frac{p_i}{p_i-q_i}\right\} \neq  \left\{\frac{p_j}{q_j}, \frac{p_j}{p_j-q_j}\right\}$ whenever $i\neq j$.

Using the presentation in \eqref{eq:pos_def_form} we see that $Y$ is the boundary of positive semi-definite plumbing $X$ intersection form $(\Z^n, Q_{\Gamma})$, where $\Gamma$ is a weighted star-shaped graph as in Figure~\ref{fig:plumbing} and $n = |\Gamma|$ is the number of vertices in $\Gamma$. For $i=1,2$ we form the closed smooth manifolds
\[
W_i=X \cup U_i.
\]
\begin{lem}\label{lem:Wi_properties}
For $i=1,2$, $W_i$ has a positive definite intersection form and $H_2(W_i;\Z)$ is torsion-free and of rank $n-1$.%\footnote{The rank of $H_2(W_i;\Z)$ turns out to play essentially no role in the proof.}
\end{lem}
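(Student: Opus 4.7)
The plan is to apply Proposition~\ref{prop:def_gluing_thm} to the decomposition $W_i = X\cup_Y U_i$ to obtain positive definiteness, and then to pin down $H_2(W_i;\Z)$ via an Euler characteristic count combined with Mayer--Vietoris.

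To invoke Proposition~\ref{prop:def_gluing_thm} I need both pieces to be positive semi-definite and the rational first homology of $Y$ to inject. The plumbing $X$ is positive semi-definite by Lemma~\ref{lem:kernel_properties}, since the presentation \eqref{eq:pos_def_form} has $\varepsilon(Y)=0$ and every leg weight is at least $2$; and $U_i$ is trivially positive semi-definite because $H_2(U_i;\Z)=0$ by Lemma~\ref{lem:Ui_properties}\eqref{enum:H2Z_zero}. The rational $H_1$-injectivity is immediate from Lemma~\ref{lem:Ui_properties}\eqref{enum:H1Q_iso}, which says that inclusion already induces an isomorphism $H_1(Y;\Q)\xrightarrow{\cong}H_1(U_i;\Q)$. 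Proposition~\ref{prop:def_gluing_thm} then yields that $W_i$ is positive definite.

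For the rank and torsion of $H_2(W_i;\Z)$, the first step is to check $H_1(W_i;\Z)=0$ using the integral Mayer--Vietoris sequence for $W_i=X\cup_Y U_i$. The plumbing $X$ is simply connected (a tree plumbing of disk bundles over $S^2$), and Lemma~\ref{lem:Ui_properties}\eqref{enum:H1Z_surj} gives that $H_1(Y;\Z)\to H_1(U_i;\Z)$ is surjective, so the Mayer--Vietoris sequence forces $H_1(W_i;\Z)=0$. In particular $b_1(W_i)=0$, and since $\chi(X)=n+1$ while $\chi(U_i)=\chi(Y)=0$, we find $\chi(W_i)=n+1$. Combining this with $b_0(W_i)=b_4(W_i)=1$ and $b_3(W_i)=b_1(W_i)=0$ (by Poincar\'e duality) gives $b_2(W_i)=n-1$. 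Torsion-freeness of $H_2(W_i;\Z)$ then follows from $H_1(W_i;\Z)=0$ via Poincar\'e duality and the universal coefficient theorem, which together identify $\tor H_2(W_i;\Z)$ with $\mathrm{Ext}(H_1(W_i;\Z),\Z)=0$. The whole argument is essentially bookkeeping with Mayer--Vietoris once Lemma~\ref{lem:Ui_properties} and Proposition~\ref{prop:def_gluing_thm} are in hand; no step looks especially delicate, as the substantive input has already been absorbed into those two results.
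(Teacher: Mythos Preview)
Your proof is correct and follows essentially the same approach as the paper: Proposition~\ref{prop:def_gluing_thm} for definiteness, Mayer--Vietoris with $H_1(X;\Z)=0$ and Lemma~\ref{lem:Ui_properties}\eqref{enum:H1Z_surj} for $H_1(W_i;\Z)=0$, and universal coefficients plus Poincar\'e duality for torsion-freeness. The only difference is in computing the rank: you use an Euler characteristic count, whereas the paper uses Novikov additivity of the signature together with definiteness to conclude $b_2(W_i)=\sigma(W_i)=\sigma(X)=n-1$; both are equally valid bookkeeping.
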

\begin{proof}
The $W_i$ are definite by Theorem~\ref{prop:def_gluing_thm}: by Lemma~\ref{lem:Ui_properties}\eqref{enum:H1Q_iso} the map 
\[H_1(Y;\Q)\rightarrow H_1(U_i;\Q)\]
is an isomorphism and both $X$ and $U_i$ are positive semi-definite.

Using the Mayer-Vietoris sequence for homology shows that $H_1(W_i;\Z)$ sits inside the exact sequence
\begin{equation*}
 H_1(Y;\Z) \rightarrow H_1(U_i;\Z) \oplus H_1(X;\Z) \rightarrow H_1(W_i;\Z) \rightarrow 0.
\end{equation*}
Since $H_1(X;\Z)=0$ and, by Lemma~\ref{lem:Ui_properties}\eqref{enum:H1Z_surj} the map $H_1(Y;\Z) \rightarrow H_1(U_i;\Z)$ is surjective, we see that $H_1(W_i;\Z)=0$. By universal coefficients, this implies that $H^2(W_i; \Z)$ is torsion-free. Poincar\'e duality implies that $H_2(W_i; \Z)$ is torsion-free too.

The manifold $W_i$ is definite, and by Novikov additivity its signature is $n-1$, i.e the signature of $X$. Thus the rank of $H_2(W_i;\Z)$ must be $n-1$. 
\end{proof}

Let us now fix bases for some of the homology groups in use. These will be in use for the remainder of the section.

Since $W_i$ is smooth, closed and definite its intersection form on $H_2(W_i;\Z)$ is diagonalizable \cite{Donaldson1987orientation}. Thus we can can select an orthonormal basis $e_1, \dots, e_{n-1}$ for $H_2(W_i;\Z)$. For $H_2(X;\Z)$ we fix the basis corresponding to vertices of the plumbing graph $\Gamma$. Thus with respect to this basis the intersection form of $X$ is presented by $Q= Q_\Gamma$.

Consider the map $\iota_i :H_2(X;\Z) \rightarrow H_2(W_i;\Z)$ induced by inclusion. Suppose that it is represented by a matrix $A_i$ (of size $(n-1)\times n$) with respect to our chosen bases. That is, if we represent elements of the homology groups by column vectors with respect to these bases, then $\iota(v)=A_i v$.\footnote{The columns of $A_i$ are the images of vertices.} Since $\iota_i$ preserves the intersection form we have that
\[
w^T Q v = (A_i w)^T A_i v = w^T A_i^T A_i v
\]
for all $v,w \in H_2(X)$ and thus we see that each $A_i$ satisfies 
\begin{equation}\label{eq:Ai_factorization}
A_i^T A_i=Q.
\end{equation}
Since $H_2(W_i;\Z)$ is torsion-free, universal coefficients shows that we have an isomorphism $H^2(W_i;\Z) \cong \Hom(H_2(W_i;\Z),\Z)$. This allows us to take the basis for $H^2(W_i;\Z)$ which is algebraically dual to our chosen basis for $H_2(W_i;\Z)$. That is, we take basis the $e_1^*, \dots, e_{n-1}^*$ satisfying
\[
e_i^*(e_j)= \delta_{ij}.
\]
Similarly, as $H_2(X;\Z)$ is torsion-free, we have an isomorphism $H^2(X;\Z) \cong \Hom(H_2(X;Z),\Z)$, allowing us to take the basis for $H^2(X;\Z)$ which is algebraically dual to the vertex basis of $H_2(X;\Z)$. 

Now the map 
\[
H^2(W_i;\Z) \rightarrow H^2(X;\Z)
\]
induced by inclusion is dual to the map $H_2(X;\Z) \rightarrow H_2(W_i;\Z)$. Thus with respect to the dual bases, the map $H^2(W_i;\Z) \rightarrow H^2(X;\Z)$ is represented by the matrix $A_i^T$.

The technical heart of Theorem~\ref{thm:embedding} is the following lemma which allows to further understand the $A_i^T$. Recall that the matrix $Q$ has kernel of dimension one (see Lemma~\ref{lem:kernel_properties}).
\begin{lem}\label{lem:tors_condition}
Let $v_0$ be a non-zero vector such that $v_0^T Q=0$. Then
\begin{equation}\label{eq:tor_condition}
\im (A_1^T | A_2^T) = \{x\in \Z^n \,|\, v_0^T x =0 \}.
\end{equation}
%I.e the image of $(A_1^T | A_2^T)$ is precisely the kernel of the linear functional $\Z^n \rightarrow \Z$ defined by $x \mapsto v_0^T x$.
\end{lem}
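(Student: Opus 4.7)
The plan is to establish the two inclusions separately. Throughout, I write $i_{Y,X}^*$ and $i_{Y,U_i}^*$ for the inclusion-induced maps into $H^2(Y;\Z)$ from $H^2(X;\Z)$ and $H^2(U_i;\Z)$ respectively.

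For the containment $\im(A_1^T \mid A_2^T) \subseteq \{x \in \Z^n : v_0^T x = 0\}$, I would observe that for any $y \in \Z^{n-1}$,
\[
v_0^T (A_i^T y) = (A_i v_0)^T y,
\]
so it suffices to show that $A_i v_0 = 0$. Using the factorization $A_i^T A_i = Q$ from \eqref{eq:Ai_factorization}, one computes $(A_i v_0)^T (A_i v_0) = v_0^T Q v_0 = 0$, and since Lemma~\ref{lem:Wi_properties} guarantees that the intersection form on $H_2(W_i;\Z)$ is positive definite, the vanishing of this self-pairing forces $A_i v_0 = 0$.

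For the reverse inclusion I would combine a Mayer--Vietoris computation with the long exact sequence of the pair $(X,Y)$. The Mayer--Vietoris sequence associated to $W_i = X \cup_Y U_i$ contains
\[
H^2(W_i;\Z) \xrightarrow{\alpha} H^2(X;\Z) \oplus H^2(U_i;\Z) \xrightarrow{\beta} H^2(Y;\Z),
\]
with $\alpha(z) = (i_X^* z, i_{U_i}^* z)$ and $\beta(a,b) = i_{Y,X}^*(a) - i_{Y,U_i}^*(b)$. Projecting $\ker \beta$ onto the first factor identifies $\im(A_i^T) = (i_{Y,X}^*)^{-1}\bigl(\im i_{Y,U_i}^*\bigr)$. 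A crucial input is that $i_{Y,X}^*$ itself is surjective, which follows from the cohomology long exact sequence of $(X,Y)$ together with Lefschetz duality, since $H^3(X,Y;\Z) \cong H_1(X;\Z) = 0$. This surjectivity allows one to commute preimages with sums, yielding
\[
\im(A_1^T) + \im(A_2^T) = (i_{Y,X}^*)^{-1}\bigl(\im i_{Y,U_1}^* + \im i_{Y,U_2}^*\bigr),
\]
and Lemma~\ref{lem:Ui_properties}\eqref{enum:H2tors_iso} then reduces the right-hand side to $(i_{Y,X}^*)^{-1}(\tor H^2(Y;\Z))$.

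It remains to identify this preimage with $\{x \in \Z^n : v_0^T x = 0\}$. The long exact sequence of $(X,Y)$ combined with Lefschetz duality presents $H^2(Y;\Z)$ as $\coker(Q \colon \Z^n \to \Z^n)$, with $i_{Y,X}^*$ playing the role of the quotient map. Hence $x$ maps to a torsion class in $H^2(Y;\Z)$ if and only if some nonzero multiple $k x$ lies in $Q\Z^n$, that is, if and only if $x$ lies in the saturation $Q\Q^n \cap \Z^n$. Since $Q$ is symmetric with one-dimensional kernel $\Q v_0$, one has $Q\Q^n = v_0^\perp$, so this saturation is precisely $\{x \in \Z^n : v_0^T x = 0\}$. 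The main obstacle is the bookkeeping in the middle step, namely checking that the preimage and sum operations commute; this rests on the surjectivity of $i_{Y,X}^*$ together with the sharp form of Lemma~\ref{lem:Ui_properties}\eqref{enum:H2tors_iso}.
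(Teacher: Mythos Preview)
Your argument is correct and essentially parallel to the paper's. The paper uses the commutative diagram coming from naturality of the long exact sequence of pairs for $(X,Y)\hookrightarrow(W_i,U_i)$ together with excision, while you use the Mayer--Vietoris sequence for $W_i=X\cup_Y U_i$; these are equivalent packagings of the same content, and both land on the identification $\im A_i^T/\im Q \cong \im\bigl(H^2(U_i)\to H^2(Y)\bigr)$, after which Lemma~\ref{lem:Ui_properties}\eqref{enum:H2tors_iso} and the computation of torsion in $\coker Q$ finish the job identically. Your separate verification of the inclusion $\subseteq$ via $A_iv_0=0$ is a clean direct check, though it is made redundant by the second half of your argument, which already produces an equality rather than a single containment.
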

\begin{proof}
First fix $i\in\{1,2\}$. By considering the long exact sequences of pairs and the inclusion $(X, Y) \xhookrightarrow{} (W_i, U_i)$, we have the following commutative diagram with exact rows:
\[\xymatrix{
  H^2(W_i,U_i) \ar[d]^{\cong i_1} \ar[r]^\alpha &H^2(W_i)\ar[d]^{i_2}\ar[r]^\beta & H^2(U_i) \ar[d]^{i_3} \ar[r] & 0\\
  H^2(X,Y) \ar[r]^\gamma   &H^2(X)\ar[r]^\delta & H^2(Y) \ar[r] & 0.}\]
 The zeroes on the right hand side come from the fact that $H^3(X,Y)\cong H_1(X)=0$ and that $H^3(X,Y)\cong H^3(W_i, U_i)$ by excision. Excision shows that $i_1$ is an isomorphism.
 
Since $H^2(X,Y)$ is isomorphic to $H_2(X)$ by Poincar\'e duality, we can choose a basis for $H^2(X,Y)$ with respect to which the map $\gamma$ is represented by the matrix $Q$.

Since the map $\delta$ is surjective. This allows us to identify $H^2(Y)$ with the cokernel of $\gamma$, in particular this allows us to identify $H^2(Y)$ with cokernel of $Q$. Since $\beta$ is surjective we have that $\im (i_3 \circ \beta) =\im i_3$. Thus the image of $i_3$ is identified with the image of $i_2$ in the cokernel of $\gamma$. Since the map $i_2$ is represented by $A_i^T$ with respect to the coordinates in use this shows we can identify $\im i_3$ with $\im A_i^T / \im Q$.

By Lemma~\ref{lem:Ui_properties} the map $H^2(U_1) \oplus H^2(U_2) \rightarrow \tor H^2(Y)$ is an isomorphism. It follows that the image of the matrix $(A_1^T | A_2^T)$ is precisely the set of elements in $\Z^n$, which represent torsion modulo $Q$. We identify this set now.

 Let $v_0$ be a non-zero vector such that $v_0^T Q=0$. Notice that $x\in \Z^n$ represents torsion in $\Z^n/Q\Z^n$ if and only if $mx \in Q \Z^n$ for some integer $m$. Equivalently if and only if there is $w \in \Q^n$ such that $x=Qw$. Since $v_0^T Q=0$, this implies that $v_0^T x =0$. Conversely, we have have that $Q \Q^n$ is an $(n-1)$-dimensional subspace of $\Q^n$. So so we have that 
 \[Q \Q^n = \{w\in \Q^n \,|\, v_0^T w =0\}.\]
 Thus we see that the set of $v\in \Z^n$ which represent torsion modulo $Q$ is precisely 
 \[\{x\in \Z^n \,|\, v_0^T x =0 \},\]
which establishes \eqref{eq:tor_condition}.
\end{proof}

Next we need to better understand the structure of the matrices $A_i^T$. Note that the rows of $A_i^T$ correspond to vertices of the plumbing graph $\Gamma$. We will assume that these rows are ordered so that the central vertex of the plumbing is the first row.
\begin{prop}\label{prop:Ai_structure}
Up to rearranging columns and multiplying columns by $-1$ and choosing an appropriate ordering in the vertices the matrices $A_i^T$ take the form

\[
A_i^T =
\left(
\begin{array}{c| c| c| c}
\RowMatrix{1} & \dots & \RowMatrix{1} & \RowMatrix{0} \\ \hline
J_1^{(i)} &  & 0 & B_1^{(i)}\\ 
\vdots & \ddots & \vdots & \vdots \\
0 &  & J_k^{(i)} & B_k^{(i)}\\ 
\end{array}\right)
\]
where the rows containing the blocks $J_j^{(i)}, B_j^{(i)}$ contain the precisely vertices of the $n_j$ arms corresponding to the fractions of the form $p_j/q_i$ and $p_j/(p_j -q_j)$.
\end{prop}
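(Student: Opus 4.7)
The plan is to derive the stated block structure as a fairly direct consequence of Lemma \ref{lem:matrix_factorization} applied to the factorization $A_i^T A_i = Q$ coming from \eqref{eq:Ai_factorization}. First I would verify that $Y$ in the presentation \eqref{eq:pos_def_form} satisfies the hypotheses of Lemma \ref{lem:matrix_factorization}: the central weight is $e = \sum_j n_j$ and there are exactly $2e$ arms, while
\[
\varepsilon(Y) = e - \sum_{j=1}^k n_j \left( \frac{q_j}{p_j} + \frac{p_j - q_j}{p_j} \right) = e - \sum_{j=1}^k n_j = 0.
\]
Applied to each $A_i$, Lemma \ref{lem:matrix_factorization} then produces (after permuting and sign-changing the columns of $A_i^T$) a central-vertex row of the form $(\underbrace{1,\ldots,1}_{e},0,\ldots,0)$, together with a partition of the $2e$ arms into $e$ pairs $\{\alpha,\beta\}$ such that the non-zero entries of the first $e$ columns are confined to rows corresponding to the arms of the matching pair and the paired fractions satisfy $\frac{q_\alpha}{p_\alpha} + \frac{q_\beta}{p_\beta} = 1$.

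Next I would identify which arms can actually be paired. The equality $\frac{q_\alpha}{p_\alpha} + \frac{q_\beta}{p_\beta} = 1$ rewrites as $q_\alpha p_\beta + q_\beta p_\alpha = p_\alpha p_\beta$, and coprimality of each $(p,q)$ pair forces $p_\alpha = p_\beta$ together with $q_\alpha + q_\beta = p_\alpha$. In the standard form \eqref{eq:pos_def_form} this means that a pair must consist of one arm with fraction $\frac{p_j}{q_j}$ and one arm with fraction $\frac{p_j}{p_j-q_j}$ for some common index $j$. Consequently, the $2n_j$ arms of family $j$ are partitioned internally into exactly $n_j$ pairs, and these pairs correspond to a specific subset of $n_j$ of the first $e$ columns of $A_i^T$.

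With this structure in hand, the block form follows by choosing compatible orderings of the rows and columns. I would reorder the first $e$ columns of $A_i^T$ so that the $n_j$ columns associated with family $j$ form a contiguous block, and reorder the rows, that is, the vertices of $\Gamma$, so that the central vertex comes first, followed by all vertices on the arms of family 1, then family 2, and so on. The pairing conclusion of Lemma \ref{lem:matrix_factorization} guarantees that within the first $e$ columns the only non-zero entries below the central row lie inside the row-block of the paired family, yielding the block-diagonal portion with blocks $J_j^{(i)}$. The remaining $n-1-e$ columns carry a $0$ in the central row by construction, and their entries inside each family row-block are exactly the $B_j^{(i)}$.

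I do not expect any significant obstacle: essentially all the work has been done in Lemma \ref{lem:matrix_factorization}, and the present proof only needs to translate its pairing output into the explicit block form dictated by the families in \eqref{eq:pos_def_form}. The only subtle point is the concluding argument of Lemma \ref{lem:matrix_factorization} (the contradiction with Theorem \ref{thm:emb_ineq_eq_case}) ensuring that non-zero entries in the first $e$ columns cannot spill onto arms outside the matching pair, and I would cite this directly rather than re-prove it.
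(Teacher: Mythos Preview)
Your proposal is correct and is exactly the approach the paper takes: the paper's proof consists of the single sentence ``This is an application of Lemma~\ref{lem:matrix_factorization} to the matrices at hand,'' and you have simply unpacked that application in detail. Your verification of the hypotheses, the coprimality argument forcing paired arms to lie in the same family $j$, and the final reordering into block form are precisely the implicit steps behind that one-line proof.
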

\begin{proof}
This is an application of Lemma~\ref{lem:matrix_factorization} to the matrices at hand.
\end{proof}

The following lemma will be useful for detecting whether an element is in the image of the matrix $(A_1^T | A_2^T)$. Let $\Delta_1$ be the subtree of $\Gamma$ which consists of the central vertex and the $n_1$ arms corresponding to the rational numbers of the form $\frac{p_1}{q_1}$ and $\frac{p_1}{p_1-q_1}$.
\begin{lem}\label{lem:image_detection}
There is a vector $\overline{w}$ such that
\begin{enumerate}
    \item $\overline{w}^T (A_1^T | A_2^T) \equiv 0 \bmod p_1$
    \item $\overline{w}$ has non-zero entries only the rows which correspond to vertices of $\Delta_1$.
    \item $\overline{w}$ has an entry 1 in any row which corresponds to a leaf of $\Delta_1$
\end{enumerate}
\end{lem}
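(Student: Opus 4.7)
The plan is to define $\overline{w}$ explicitly and then verify (1) by computing $A_i \overline{w}$. Concretely, I take $\overline{w}$ to have value $p_1$ at the central vertex, value $b^{(\alpha)}_\ell$ at each vertex $(\alpha, \ell)$ on an arm $\alpha \subseteq \Delta_1$, and value $0$ elsewhere; here $b^{(\alpha)}_1, \dots, b^{(\alpha)}_{h_\alpha}$ is the sequence of integers from the proof of Lemma~\ref{lem:kernel_properties}, with $b^{(\alpha)}_{h_\alpha} = 1$ at the leaf and $b^{(\alpha)}_1 = q_\alpha$ at the starting vertex. Conditions (2) and (3) are immediate. In shorthand $\overline{w} = p_1 e_0 + \sum_{\alpha \in \Delta_1} v^{(\alpha)}$, where $e_0$ denotes the basis vector at the central vertex and $v^{(\alpha)}$ is the per-arm vector from the proof of Lemma~\ref{lem:kernel_properties}.

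To verify (1), I show $A_i \overline{w} \in p_1 \Z^{n-1}$ for each $i \in \{1,2\}$. Let $f_1, \dots, f_{n-1}$ be the orthonormal basis of $H_2(W_i;\Z)$ fixed in Section~\ref{sec:embedding_thm}, so Proposition~\ref{prop:Ai_structure} gives $A_i e_0 = f_1 + \cdots + f_e$ with $e = \sum_j n_j$. Using the identity $(A_i^T A_i)_{0,v} = Q_{0,v}$ together with the block structure of $A_i^T$, I find that in the first $n_1$ columns every non-starting vertex on a group-$1$ arm has entry $0$, while the starting vertex of each group-$1$ arm in pair $m$ (in the sense of Lemma~\ref{lem:matrix_factorization}) has entry $-1$ in column $m$. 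Hence the first $e$ coordinates of $A_i v^{(\alpha)}$ equal $-q_\alpha f_m$, where $m$ is the pair containing $\alpha$. Because each pair in group $1$ matches an arm of type $p_1/q_1$ with one of type $p_1/(p_1-q_1)$, the first $e$ coordinates of $A_i(v^{(\alpha_m)} + v^{(\alpha'_m)})$ sum to $-p_1 f_m$.

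The main step—and, I expect, the only delicate one—is to show that the last-block component (coordinates $e+1, \dots, n-1$) of $A_i(v^{(\alpha_m)} + v^{(\alpha'_m)})$ vanishes. For this I compute its squared norm using $(v^{(\alpha)})^T Q = -q_\alpha e_0^T + p_\alpha e_{(\alpha,1)}^T$ and the disjoint supports of $v^{(\alpha_m)}$ and $v^{(\alpha'_m)}$; the cross terms $(v^{(\alpha_m)})^T Q v^{(\alpha'_m)}$ vanish, giving
\[
\|A_i(v^{(\alpha_m)} + v^{(\alpha'_m)})\|^2 = p_1(q_{\alpha_m} + q_{\alpha'_m}) = p_1^2.
\]
Since this already equals the squared norm of the first-$e$ part $-p_1 f_m$, positive-definiteness of the ambient lattice forces the last-block part to be zero. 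Summing the pair contributions over all $n_1$ pairs of group $1$ and combining with the central-vertex contribution,
\[
A_i \overline{w} = p_1(f_1 + \cdots + f_e) - p_1(f_1 + \cdots + f_{n_1}) = p_1(f_{n_1+1} + \cdots + f_e) \in p_1 \Z^{n-1},
\]
which gives (1). Since the argument is uniform in $i$, it applies to both $A_1^T$ and $A_2^T$ simultaneously.
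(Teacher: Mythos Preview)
Your proof is correct and constructs the same vector $\overline{w}$ as the paper. The verification of (1) also rests on the same norm trick (compute $\|A_i(\cdot)\|^2$ via $A_i^T A_i = Q$ and use positive-definiteness to force vanishing); the only difference is packaging. The paper observes that the restriction $\widetilde{A}_i$ of $A_i$ to the rows of $\Delta_1$ and to the columns in the blocks $J_1^{(i)}$ and $B_1^{(i)}$ satisfies $\widetilde{A}_i^T\widetilde{A}_i=Q_\Delta$, where $Q_\Delta$ is the plumbing matrix for $\Delta_1$ with the central weight reset to $n_1$; since the (non-extended) $w$ is the null vector of $Q_\Delta$, one gets $\widetilde{A}_i w=0$ in a single stroke. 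You instead work pair-by-pair, first reading off the first $e$ coordinates of $A_i v^{(\alpha)}$ from the block structure together with the identity $(A_i^T A_i)_{0,v}=Q_{0,v}$, and then killing the last-block part of each $A_i(v^{(\alpha_m)}+v^{(\alpha'_m)})$ by a norm comparison. Both routes land on $A_i\overline{w}=p_1(f_{n_1+1}+\cdots+f_e)$, so the two arguments are really the same computation organised differently; the paper's sub-plumbing trick is a little slicker, while your version makes the per-pair cancellation more visible.
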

\begin{proof}
Take the subplumbing $\Delta_1$ and adjust the weight on the central vertex to be $n_1$. Call this new plumbing graph $\Delta$. I.e $\Delta$ is the semidefinite plumbing which bounds the Seifert-fibered space $S^2 \left(n_1; \left\{\frac{p_1}{q_1}, \frac{p_1}{p_1-q_1}\right\}^{\times n_1} \right)$. Let $Q_\Delta$ be the matrix corresponding to the vertex basis of this plumbing. Notice that for $i=1,2$ the block matrices
\[
\widetilde{A}_i^T=
\begin{pmatrix}
\RowMatrix{1} & \RowMatrix{0} \\
J_1^{(i)} & B_1^{(i)}
\end{pmatrix}
\]
satisfy $Q_\Delta = \widetilde{A}_i^T \widetilde{A}_i$. Now let $w$ be the vector constructed in Lemma~\ref{lem:kernel_properties} satisfying $Qw=0$. Since every numerator is equal to $p_1$ the vector $w$ has entries equal to 1 on rows corresponding to a leaf of $\Delta$ and an entry $p_1$ on the row corresponding to the central vertex. Now we have that
\[
0=w^T Q w = (\widetilde{A}_i w)^T (\widetilde{A}_i w)
\]
However the right hand term here is simply the Euclidean norm of $\widetilde{A}_i w$ so it follows that $\widetilde{A}_i w=0$. Now extend $w$ by adding zero entries to get $\overline{w}\in \Z^n$ until we can perform the multiplication $\overline{w}^T A_i^T$. That is consider the product
\begin{align*}
\overline{w}^T A_i^T &= (\overline{w}^T, 0, \dots, 0)
\left(
\begin{array}{c| c| c| c}
\RowMatrix{1} & \dots & \RowMatrix{1} & \RowMatrix{0} \\ \hline
J_1^{(i)} &  & 0 & B_1^{(i)}\\ 
\vdots & \ddots & \vdots & \vdots \\
0 &  & J_k^{(i)} & B_k^{(i)}\\ 
\end{array}\right).\\
&= \overline{w}^T 
\left(
\begin{array}{c| c| c| c}
\RowMatrix{1} & \RowMatrix{1} & \RowMatrix{0} \\ \hline
J_1^{(i)} & \RowMatrix{0} & B_1^{(i)}\\ 
\end{array}\right).\\
&=(0, \dots, 0, p_1, \dots, p_1 ,0,\dots, 0)\\
&\equiv (0, \dots, 0) \bmod{p_1}
\end{align*}

%\[
%(\overline{w}^T, 0, \dots, 0)
%\left(
%\begin{array}{c| c| c| c}
%\RowMatrix{1} & \dots & \RowMatrix{1} & \RowMatrix{0} \\ \hline
%J_1^{(i)} &  & 0 & B_1^{(i)}\\ 
%\vdots & \ddots & \vdots & \vdots \\
%0 &  & J_k^{(i)} & B_k^{(i)}\\ 
%\end{array}\right).
%\]
Thus the vector $\overline{w}$ has all the necessary properties.
\end{proof}
We are now ready to apply the obstruction.
\begin{lem}\label{lem:obstruction_applied}
For each $i\neq j$, we have that $\gcd(p_i,p_j)=1$.
\end{lem}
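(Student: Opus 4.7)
The plan is to argue by contradiction: suppose some prime $r$ divides both $p_i$ and $p_j$ with $i \neq j$, and without loss of generality take $i = 1$, $j = 2$. I will derive a contradiction by applying Lemma~\ref{lem:image_detection} twice (once for each of the two indices) and comparing the resulting vectors against the kernel vector $v_0$ of Lemma~\ref{lem:kernel_properties}.

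First, apply Lemma~\ref{lem:image_detection} to indices $1$ and $2$ to obtain vectors $\overline{w}_1$ and $\overline{w}_2$, each supported on the corresponding subtree $\Delta_s$, with entry $1$ on every leaf of its subtree. By construction $\overline{w}_s^T (A_1^T \mid A_2^T) \equiv 0 \bmod p_s$, and since $r \mid p_s$ for $s=1,2$, we get $\overline{w}_s^T x \equiv 0 \bmod r$ for every $x \in \im(A_1^T \mid A_2^T)$. By Lemma~\ref{lem:tors_condition}, this image equals $\ker(v_0^T : \Z^n \to \Z)$.

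Next, verify that $\gcd$ of the entries of $v_0$ equals $1$: by Lemma~\ref{lem:kernel_properties} the leaf entries on the $j$th arm are $L/p_j$ with $L=\lcm(p_1,\dots,p_k)$, and $\gcd_j (L/p_j) = L/\lcm_j(p_j) = 1$. Hence $v_0^T \bmod r$ is a surjective linear map $(\Z/r\Z)^n \to \Z/r\Z$, with kernel of codimension one. Since $\ker(v_0^T)$ reduces mod $r$ to a codimension-one subspace of $(\Z/r\Z)^n$ contained in $\ker(v_0^T \bmod r)$, these two subspaces coincide. The functional $\overline{w}_s^T \bmod r$ vanishes on this codimension-one subspace, so (working in the field $\Z/r\Z$) there exists a scalar $c_s \in \Z/r\Z$ with $\overline{w}_s \equiv c_s v_0 \bmod r$.

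Finally, extract the contradiction by reading entries on leaves. From a leaf of a $p_1$-arm, $\overline{w}_1$ has entry $1$ while $v_0$ has entry $L/p_1$, so $c_1 \cdot L/p_1 \equiv 1 \bmod r$; in particular $r \nmid L/p_1$. From a leaf of a $p_2$-arm the analogous computation gives $c_2 \cdot L/p_2 \equiv 1 \bmod r$, so $c_2$ is a unit mod $r$. But then reading $\overline{w}_2$ on a leaf of a $p_1$-arm (where its entry is $0$) yields $c_2 \cdot L/p_1 \equiv 0 \bmod r$, forcing $r \mid L/p_1$ and contradicting what was just derived. The main obstacle in executing the plan is the linear-algebra step reducing modulo $r$: one must ensure that $v_0$ has entries with trivial gcd so that $v_0^T$ remains nonzero mod $r$, and that the image of $\ker(v_0^T)$ in $(\Z/r\Z)^n$ really fills out all of $\ker(v_0^T \bmod r)$, so the two functionals $\overline{w}_s$ and $v_0$ are forced to be proportional mod $r$.
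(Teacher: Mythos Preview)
Your argument is correct. The key step—showing that $\overline{w}_s \bmod r$ must be a scalar multiple of $v_0 \bmod r$ because both vanish on the same codimension-one subspace of $(\Z/r\Z)^n$—is sound once you know $v_0$ is primitive, and your verification that $\gcd_j(L/p_j)=1$ establishes exactly that. The contradiction from reading entries on leaves of the $p_1$-arms then goes through as you describe.

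That said, the paper's proof takes a more direct route and avoids most of this machinery. Rather than reducing modulo a prime and invoking two applications of Lemma~\ref{lem:image_detection}, the paper uses only $\overline{w}=\overline{w}_1$ and simply writes down an explicit integer vector $x$ in $\ker v_0^T$: with $g=\gcd(p_1,p_2)$, take $x$ to be $p_1/g$ on a leaf of a $p_1$-arm, $-p_2/g$ on a leaf of a $p_2$-arm, and zero elsewhere. Then $v_0^T x = (L/p_1)(p_1/g) - (L/p_2)(p_2/g)=0$, so $x\in\im(A_1^T\,|\,A_2^T)$ by Lemma~\ref{lem:tors_condition}, while $\overline{w}^T x = p_1/g$; since $\overline{w}^T(A_1^T\,|\,A_2^T)\equiv 0\bmod p_1$, this forces $g=1$ immediately. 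Your approach trades this explicit witness for a structural statement (the proportionality $\overline{w}_s\equiv c_s v_0$), which is conceptually appealing but requires the extra primitivity check and the second detection vector $\overline{w}_2$. The paper's version is shorter and needs neither.
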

\begin{proof}
We will show that $p_1$ and $p_2$ are coprime. By relabelling the $p_i$s this is sufficient to establish the lemma. Let $v_0 \in \Z^n$ be the vector satisfying $v_0^T Q=0$ as constructed in
Lemma~\ref{lem:kernel_properties}. Let $L=\lcm (p_1, \dots, p_k)$. Then the coefficient of $v_0$ on a row corresponding to a leaf of an arm corresponding to $p_1/q_1$ is $L/p_1$ and its value on a leaf of an arm corresponding to $p_2/q_2$ is $L/p_2$. Thus if we take $g=\gcd(p_1, p_2)$, there is a vector $x\in \Z^n$ of the form
\[
x^T=(0,\dots, 0, p_1/g, 0, \dots, 0, -p_2/g,0, \dots,0)
\]
such that $v_0^T x=0$. And if we take $\overline{w}$ to the vector constructed in Lemma~\ref{lem:image_detection}, we have $\overline{w}^T x = p_1/g$. However, by Lemma~\ref{lem:tors_condition}, $x$ must be in the image of $(A_1^T | A_2^T)$, so we have $x= (A_1^T | A_2^T) u$ for some $u$. However, since $\overline{v} (A_1^T | A_2^T)\equiv 0 \bmod{p_1}$, this implies that 
\[
\overline{v} x = \frac{p_1}{g} \equiv 0 \bmod p_1.
\]
This implies that $g=1$, which is the required conclusion.
\end{proof}
\begin{exam}
Returning to the Seifert fibered space $S^2 \left(2; \frac{2}{1}, \frac{2}{1}, \frac{8}{3}, \frac{8}{5} \right)$ studied in Example~\ref{ex:main}. Since 2 and 8 are not coprime, this manifold cannot be embedded into a $\Z H_*(S^1 \times S^3)$ by a map inducing a surjection on $H_1$. The vector $\overline{w}$ constructed in Lemma~\ref{lem:image_detection} is
\[
\overline{w}=(0,1,1, 0, 0,0,0, 0).
\]
This satisfies $\overline{w} A^T\equiv 0 \bmod{2}$ for any matrix $A$ satisfying $A^T A =Q$.
If we take $x$ to be the column vector
\[
x=(0, 1, 0, 0, -4, 0, 0, 0)^T, 
\]
then we see that $x$ cannot be in the image of $(A_1 | A_2)$ for any pair of matrices satisfying $A_i^T A_i= Q$ for $i=1,2$, since $\overline{w} x = 1 \not\equiv 0 \bmod{2}$. However $x$ satisfies $v_0 x =0$.
\end{exam}
Thus we summarize the proceedings of this section as follow in the following proposition which corresponds to the implication \eqref{it:embedding} $\Rightarrow$ \eqref{it:explicit_Description} in Theorem~\ref{thm:embedding}.
\begin{prop}\label{prop:obstruction_summary}
Let $Y$ be a Seifert fibered space over $S^2$ with an embedding into $Z$ an $\Z H_* (S^1 \times S^3)$ so that the map $H_1(Y;\Z) \rightarrow H_1(Z; \Z)$ is a surjection. Then $Y$ is homeomorphic to a space of the form:
\[
Y\cong S^2 \left(0; \left\{\frac{p_1}{q_1}, -\frac{p_1}{q_1}\right\}^{\geq 1}, \dots, \left\{\frac{p_k}{q_k}, -\frac{p_k}{q_k}\right\}^{\geq 1}  \right),
\]
where we assume $1\leq q_i<p_i$ are coprime integers and $\gcd(p_i, p_j)=1$ whenever $i\neq j$. \qed
\end{prop}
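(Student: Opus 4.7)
The proof will essentially be a summary of the material developed in the section, so the plan is to organize the preceding lemmas into the desired conclusion. First I would use the decomposition furnished by Lemma~\ref{lem:Ui_properties} to split $Z$ as $Z = U_1 \cup_Y U_2$, where each $U_i$ is a $\Q H_*(S^1 \times B^3)$ with boundary $\pm Y$. Applying Theorem~\ref{thm:boundingS1timesB3} to $U_1$ (or $U_2$) already gives the pairing structure of $Y$ shown in \eqref{eq:starting_form}, so the only thing left to prove is the coprimality condition $\gcd(p_i, p_j) = 1$ for $i \neq j$.

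Next I would put $Y$ into the positive-definite form \eqref{eq:pos_def_form} by Rolfsen twists, producing the star-shaped positive semi-definite plumbing $X$ with intersection form $Q$. Capping off on each side gives closed $4$-manifolds $W_i = X \cup U_i$, which by Lemma~\ref{lem:Wi_properties} are smooth, closed, positive definite with torsion-free $H_2$ of rank $n-1$. By Donaldson's diagonalization theorem, $(H_2(W_i;\Z), Q_{W_i})$ embeds orthonormally in $(\Z^{n-1}, \mathrm{Id})$, so the inclusion-induced map $H_2(X;\Z) \to H_2(W_i;\Z)$ is represented by a matrix $A_i$ satisfying $A_i^T A_i = Q$.

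The key step is then to combine Lemma~\ref{lem:tors_condition}, which identifies $\mathrm{im}(A_1^T | A_2^T)$ with the orthogonal complement of the null vector $v_0$, together with Lemma~\ref{lem:image_detection}, which produces a detector vector $\overline{w}$ supported on one ``block'' of arms (say the arms with fraction $p_1/q_1$ or $p_1/(p_1-q_1)$) satisfying $\overline{w}^T(A_1^T | A_2^T) \equiv 0 \bmod p_1$. The strategy of Lemma~\ref{lem:obstruction_applied} is then to take $g = \gcd(p_1, p_2)$ and exhibit the integer vector
\[
x = (0, \dots, 0, p_1/g, 0, \dots, 0, -p_2/g, 0, \dots, 0)^T,
\]
where the non-zero entries sit on leaves of arms corresponding to $p_1/q_1$ and $p_2/q_2$ respectively. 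A direct computation from Lemma~\ref{lem:kernel_properties} gives $v_0^T x = 0$, so by Lemma~\ref{lem:tors_condition} we must have $x \in \mathrm{im}(A_1^T | A_2^T)$; pairing with $\overline{w}$ then forces $p_1/g \equiv 0 \bmod p_1$, hence $g = 1$. After relabelling, this gives $\gcd(p_i, p_j) = 1$ for every $i \neq j$, completing the proof.

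The hard part of this story is already behind us in Lemma~\ref{lem:image_detection} and Lemma~\ref{lem:matrix_factorization}; the proof of Proposition~\ref{prop:obstruction_summary} itself is really just bookkeeping, verifying that Theorem~\ref{thm:boundingS1timesB3} delivers the form \eqref{eq:starting_form} and that Lemma~\ref{lem:obstruction_applied} establishes the coprimality refinement. No new ideas are required beyond assembling these inputs.
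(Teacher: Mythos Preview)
Your proposal is correct and follows essentially the same route as the paper: the proposition is stated with a \qed because it is a summary of Section~\ref{sec:embedding_thm}, and you have accurately reconstructed that section's logic (Lemma~\ref{lem:Ui_properties} and Theorem~\ref{thm:boundingS1timesB3} for the form \eqref{eq:starting_form}, then Lemmas~\ref{lem:Wi_properties}, \ref{lem:tors_condition}, \ref{lem:image_detection}, and \ref{lem:obstruction_applied} for the coprimality condition). The only cosmetic difference is that the paper packages the application of Lemma~\ref{lem:matrix_factorization} as Proposition~\ref{prop:Ai_structure}, but this does not affect the argument.
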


\subsection{Constructing embeddings}
In this section we construct embeddings necessary to prove Theorem~\ref{thm:embedding}. Firstly we consider the case where the Seifert fibered space is a $\Z H_*(S^1\times S^2)$. Firstly we note that the existence of the necessary embedding is equivalent to bounding a $\Z H_*(S^1 \times B^3)$.

\begin{lem}\label{lem:doubling_ZHS1xB3}
Let $Y$ be a $\Z H_* (S^1 \times S^2)$. Then there exists a 4-manifold $Z$ such that $Z$ is a $\Z H_*(S^1 \times S^3)$ and $Y$ embeds into $Z$ a $\Z H_*(S^1 \times S^3)$ with the induced map $H_1(Y;\Z) \rightarrow H_1(Z; \Z)$ is surjective if and only if $Y$ bounds a $\Z H_*(S^1 \times B^3)$.
\end{lem}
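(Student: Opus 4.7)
The plan is to prove the two implications separately, using Lemma~\ref{lem:Ui_properties} for the forward direction and a doubling construction for the backward direction.

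For the forward implication, suppose $Y$ embeds into a $\Z H_*(S^1\times S^3)$ $Z$ with $H_1(Y;\Z)\to H_1(Z;\Z)$ surjective. Since $Y$ is in particular a $\Q H_*(S^1\times S^2)$, Lemma~\ref{lem:Ui_properties} applies and provides a decomposition $Z=U_1\cup_Y U_2$ with each $U_i$ a $\Q H_*(S^1\times B^3)$ satisfying $H_2(U_i;\Z)=H_3(U_i;\Z)=0$. Because $Y$ is a $\Z H_*(S^1\times S^2)$ we have $\tor H^2(Y;\Z)=0$, so property~\eqref{enum:H2tors_iso} of that lemma forces $H^2(U_i;\Z)=0$; the integral Mayer--Vietoris sequence for $Z=U_1\cup_Y U_2$, combined with the surjections $H_1(Y;\Z)\to H_1(U_i;\Z)$ from property~\eqref{enum:H1Z_surj} and the facts that $H_1(Y;\Z)\cong H_1(Z;\Z)\cong\Z$, then pins down $H_1(U_i;\Z)\cong\Z$ for $i=1,2$. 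Hence each $U_i$ is a $\Z H_*(S^1\times B^3)$ bounded by $Y$.

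For the backward implication, suppose $Y=\partial U$ where $U$ is a $\Z H_*(S^1\times B^3)$, and set $Z=U\cup_Y(-U)$, the double of $U$. I first observe that the inclusion $Y\hookrightarrow U$ induces an isomorphism on $H_1(\cdot;\Z)$: Poincar\'e--Lefschetz duality combined with the universal coefficient theorem forces $H_1(U,Y;\Z)=H_2(U,Y;\Z)=0$, and feeding these into the long exact sequence of the pair $(U,Y)$ gives the claim. A routine Mayer--Vietoris computation with $\Z$-coefficients for $Z=U\cup_Y(-U)$ then produces $H_*(Z;\Z)\cong H_*(S^1\times S^3;\Z)$; the key point is that $H_1(Y;\Z)\to H_1(U;\Z)\oplus H_1(-U;\Z)$ is conjugate to $1\mapsto(1,-1)\colon\Z\to\Z^2$, so its cokernel is $\Z$. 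Surjectivity of $H_1(Y;\Z)\to H_1(Z;\Z)$ is then immediate, since this map factors as the isomorphism $H_1(Y;\Z)\to H_1(U;\Z)$ followed by the cokernel projection.

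The proof amounts to bookkeeping; the main content has already been packaged into Lemma~\ref{lem:Ui_properties}. The only genuinely new input is the observation that vanishing of $\tor H^2(Y;\Z)$ upgrades the $\Q$-homological conclusions of that lemma to $\Z$-homological conclusions, so no step presents a real obstacle.
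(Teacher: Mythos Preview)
Your proof is correct and follows the same approach as the paper: Lemma~\ref{lem:Ui_properties} for the forward direction and doubling plus Mayer--Vietoris for the converse. You are in fact more careful than the paper, which simply asserts that Lemma~\ref{lem:Ui_properties} yields a $\Z H_*(S^1\times B^3)$ without spelling out, as you do, how the vanishing of $\tor H^2(Y;\Z)$ upgrades the $\Q$-homology conclusion to a $\Z$-homology one.
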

\begin{proof}
Firstly suppose that $Y$ embeds into $Z$ a $\Z H_*(S^1 \times S^3)$ with the induced map $H_1(Y;\Z) \rightarrow H_1(Z; \Z)$ is surjective. In this case Lemma~\ref{lem:Ui_properties} implies that $Y$ bounds a $\Z H_*(S^1 \times B^3)$. Conversely suppose that $Y$ bounds $X$ a $\Z H_*(S^1 \times S^3)$. The short exact sequence in homology implies that the map $H_1(Y;\Z) \rightarrow H_1(X; \Z)$ is an isomorphism. Now let $Z$ be the manifold obtained by doubling $X$, ie. take $Z = X \cup_Y -X$. By construction, $Z$ contains a copy of $Y$ embedded as $\partial X$. An easy application of the Meyer-Vietoris sequence implies that $Z$ is a $\Z H_*(S^1 \times S^3)$ and the map $H_1(Y;\Z) \rightarrow H_1(Z;\Z)$ is an isomorphism.
\end{proof}

Now we construct $\Z H_*(S^1 \times B^3)$s cobounding Seifert fibered spaces. 
\begin{lem}\label{lem:S1xB3construction}
Let $Y=S^2(0; \frac{p_1}{q_1}, -\frac{p_1}{q_1}, \dots, \frac{p_k}{q_k}, -\frac{p_k}{q_k})$ be Seifert fibered space where $p_i$ and $p_j$ are coprime for all $i \ne j$. Then $Y$ bounds a $\Z H_*(S^1 \times B^3)$. %Then there exists $Z$ a $\Z H_*(S^1 \times S^3)$ such that $Y$ embeds smoothly into $Z$ and the induced map $H_1(Y)\rightarrow H_1(Z)$ is a surjection. 
\end{lem}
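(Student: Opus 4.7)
The plan is to argue by induction on $k$. The base case $k = 0$ is immediate, since $Y = S^2(0) \cong S^1 \times S^2$ bounds $S^1 \times B^3$.

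For the inductive step, set $Y' = S^2(0; p_1/q_1, -p_1/q_1, \dots, p_{k-1}/q_{k-1}, -p_{k-1}/q_{k-1})$; the numerators $p_1, \dots, p_{k-1}$ remain pairwise coprime, so by the inductive hypothesis there is a $\Z H_*(S^1 \times B^3)$ $U'$ with $\partial U' = Y'$. The goal is then to build a smooth cobordism $W$ from $Y'$ to $Y$ which is a $\Z$-homology cobordism (that is, $H_*(W, Y'; \Z) = 0$), since gluing gives $U := U' \cup_{Y'} W$ a $\Z H_*(S^1 \times B^3)$ bounded by $Y$.

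To construct $W$, the idea is to exploit that $Y$ is obtained from $Y'$ by expansion: pick a regular Seifert fiber $F \subset Y'$, take two parallel copies $F_1, F_2$ of $F$, and perform $p_k/q_k$-surgery on $F_1$ and $-p_k/q_k$-surgery on $F_2$; the resulting 3-manifold is canonically $Y$. Expanding $\pm p_k/q_k$ as continued fractions converts these rational surgeries into chains of integer-framed 2-handles attached to $Y' \times \{1\} \subset Y' \times [0,1]$, and adjoining cancelling 3-handles defines $W$.

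The main technical step is the verification that $W$ is actually a $\Z$-homology cobordism and not merely a $\Q$-homology cobordism. By Lemma~\ref{lemma:sfs_homology} and the pairwise coprimality of $p_1, \dots, p_{k-1}$, the class of the regular fiber $[F] \in H_1(Y') = \Z$ equals, up to sign, $p_1 p_2 \cdots p_{k-1}$ times a generator, and the hypothesis $\gcd(p_k, p_1 \cdots p_{k-1}) = 1$ is exactly what ensures that a Smith-normal-form analysis of the attaching matrix of the 2- and 3-handles produces no torsion in $H_*(W, Y'; \Z)$. The most delicate part of the argument is expected to be this integrality check --- confirming that the handle attachments can be arranged so their combined attaching data gives a unimodular presentation over $\Z$ --- for which the proof will proceed by an explicit computation of the relevant subdeterminants, which reduce to $\gcd$s of products of the $p_i$ and which all evaluate to $1$ under the hypothesis.
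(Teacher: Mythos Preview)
The proposal has a genuine gap in the construction of the cobordism $W$. After attaching the 2-handles coming from the continued fraction expansions of $\pm p_k/q_k$ to $Y' \times [0,1]$, the upper boundary is precisely $Y$. For $k \geq 2$ this is a Seifert fibered space over $S^2$ with at least four exceptional fibers, hence irreducible; it contains no embedded 2-spheres along which 3-handles can be attached. So there are no ``cancelling 3-handles,'' and the cobordism built from the 2-handles alone has $H_2(W, Y'; \Z) \cong \Z^N$ with $N$ the total number of 2-handles attached. The planned Smith-normal-form verification that $W$ is a $\Z$-homology cobordism cannot succeed, because $W$ simply is not one. (The case $k=1$ is degenerate, since then $Y \cong Y' \cong S^1 \times S^2$ and no handles are needed at all.)

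The paper's argument avoids induction entirely and is far more direct: set $M = S^2(0; p_1/q_1, \dots, p_k/q_k)$, let $M'$ be the complement of an open neighbourhood of a regular fiber in $M$, and take the 4-manifold to be $Z' = M' \times [0,1]$. The boundary $\partial(M' \times I)$ is the double of $M'$, which is exactly $Y$. The pairwise coprimality of the $p_i$ enters only in checking that $H_1(M';\Z) \cong \Z$ (equivalently, that some Dehn filling of $M'$ is an integer homology sphere), so that $M' \times I$ is a $\Z H_*(S^1 \times B^3)$. If you wish to salvage an inductive approach, you would need a genuinely different construction of the step cobordism --- for instance by comparing $M'_{k-1} \times I$ with $M'_k \times I$ --- but at that point the induction is doing no real work and you are essentially reproducing the paper's construction.
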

\begin{proof}
By Lemma~\ref{lemma:sfs_homology}, we can compute that $Y$ is an integer homology $S^1 \times S^2$. Now consider the Seifert fibered space $M=S^2(0; \frac{p_1}{q_1}, \dots, \frac{p_k}{q_k})$ and let $M'$ be the Seifert fibered space obtained by deleting an open Seifert fibered neighbourhood of a regular fiber in $M$. One can check by various means that $M'$ is a $\Z H_*(S^1 \times B^2)$. For example we can fill in the toroidal boundary component to obtain an integer homology sphere. Thus let $Z'$ be the product
\[
Z':=M' \times [0,1].
\]
By construction $Z'$ is a $\Z H_*(S^1\times B^3)$. The boundary of $Z'$ is the double of $M'$, which is homeomorphic to $Y$.

%Using the long exact sequence for the pair $(Z', \partial Z')$, we see that the map induced by inclusion $H_1(\partial Z') \rightarrow H_1(Z)$ is an isomorphism. 

%Take $Z= Z' \cup_{\partial Z'} -Z'$ to be the double of $Z'$. By construction $Z$ admits an embedding of $Y$. Moreover using excision we can show the the inclusion of $\partial Z'$ into $Z$ induces an isomorphism at the level of $H_1$.

%By doubling this we get an integer homology $S^1\times S^3$ with $Y$ embedded. It is straightforward to check that the induced map on $H_1$ is a surjection.
\end{proof}
We remind the reader of the definition of expansion for Seifert fibered spaces. Given the Seifert fibered space $Y=S^2(e;\frac{p_1}{q_1}, \dots, \frac{p_k}{q_k})$, we say that a space $Y'$ is obtained from $Y$ by expansion, if it takes the form $Y'=S^2(e;\frac{p_1}{q_1}, \dots, \frac{p_k}{q_k}, -\frac{p_j}{q_j},\frac{p_j}{q_j})$ for some $j$ in the range $1\leq j \leq k$. The following is a refinement of \cite[Lemma~1.6]{Issa2018embedding}. The final step is to observe how embeddings change under expansion.
\begin{lem}\label{lem:expansion}
Let $Y$ and $Y'$ be Seifert fibered spaces such that $Y'$ is obtained from $Y$ by expansion. Then there is a smooth embedding
\[
\iota :Y' \rightarrow Y \times [0,1]
\]
and the induced map
\[
\iota_* : H_1(Y') \rightarrow H_1 (Y \times [0,1]) \cong H_1(Y)
\]
is surjective.
\end{lem}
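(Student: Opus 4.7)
The plan is to realize $Y'$ inside $Y\times[0,1]$ via a fiber-sum construction performed in a 4-dimensional tube around a regular fiber. Setting $W:=S^2(0;p_j/q_j,-p_j/q_j)$, expansion is precisely the fiber sum $Y'\cong Y\#_F W$ along regular Seifert fibers $F\subset Y$ and $F_W\subset W$. Decomposing $Y=Y_0\cup_{T^2} N$ and $W=M\cup_{T^2} N(F_W)$ by removing fibered neighborhoods of regular fibers ($N=N(F)\cong D^2\times S^1$), this gives the description $Y'\cong Y_0\cup_{T^2}M$ with fibers glued to fibers and meridians to meridians.

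I would construct the embedding in two pieces. First, place $Y_0$ as the equatorial slice $Y_0\times\{1/2\}\subset Y\times[0,1]$; its torus boundary is then $\partial N\times\{1/2\}$ sitting in the tube $N\times[0,1]$. Second, embed $M$ into the 4-dimensional tube $N\times[0,1]\cong S^1\times D^3$ so that $\partial M$ matches $\partial N\times\{1/2\}$ under the Seifert framing of a regular fiber. The key observation here is that $W$ bounds a $\mathbb{Z}H_*(S^1\times B^3)$: applying the construction of Lemma~\ref{lem:S1xB3construction} to the single slope pair $(p_j/q_j,-p_j/q_j)$, we have $W=\partial V$ where $V:=M_0\times[0,1]$ and $M_0:=S^2(0;p_j/q_j)\setminus N(\text{reg.\ fiber})$ is topologically a solid torus (since $\pi_1$ of a Seifert-fibered disk with one exceptional fiber is $\mathbb{Z}$). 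Thus $V\cong S^1\times D^2\times[0,1]\cong S^1\times D^3\cong N\times[0,1]$ as $4$-manifolds, and since $M\subset W=\partial V$, we obtain the required embedding $M\hookrightarrow V\cong N\times[0,1]$ after choosing an identification of $V$ with $N\times[0,1]$ that carries the Seifert framing of $W$'s regular fibers to the Seifert framing of $N$'s regular fibers.

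Assembling these pieces yields $\iota(Y')=(Y_0\times\{1/2\})\cup M\subset Y\times[0,1]$, a smoothly embedded copy of $Y'$. Surjectivity of $\iota_*$ on $H_1$ is then straightforward: the composition $H_1(Y_0)\to H_1(Y')\to H_1(Y\times[0,1])\cong H_1(Y)$ agrees with the inclusion-induced map $H_1(Y_0)\to H_1(Y)$, which is surjective by a Mayer--Vietoris argument on $Y=Y_0\cup N(F)$, since the class of the fiber $F$ is represented by a nearby parallel regular fiber contained in $Y_0$. The main obstacle in executing this plan is the framing-matching step in the second bullet: one must verify that the identification of $V$ with $N\times[0,1]$ can be chosen to deliver $\partial M=\partial N(F_W)$ precisely onto $\partial N\times\{1/2\}$ with regular fibers and meridians matching, despite the fact that the Seifert fibration on $V$'s boundary is not the product fibration of $S^1\times S^2$. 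Working this out amounts to a careful Seifert-framing computation comparing the $(p_j,q_j)$-structure on $W$'s Heegaard torus to the product structure on $\partial(N\times I)$, and this is the technical heart of the argument.
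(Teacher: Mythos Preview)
Your overall shape---place $Y_0$ as a level slice and then cap off with a copy of $M$ inside a $4$--dimensional solid-torus tube---is exactly the skeleton of the paper's argument, and your surjectivity argument at the end is fine. The problem is the second step: the framing issue you flag is not a technicality to be worked out but an actual obstruction to the construction you propose. You want a diffeomorphism $\phi:V\to N\times[0,1]$ carrying the torus $\partial N(F_W)\subset W=\partial V$ onto $\partial N\times\{1/2\}\subset\partial(N\times[0,1])$ with regular fibers matching. But the regular fiber of $W=S^2(0;p_j/q_j,-p_j/q_j)$ represents $p_j$ times a generator of $H_1(W)\cong\Z$ (it is a $(p_j,q_j)$--curve on the Heegaard torus of the doubled solid torus), whereas the regular fiber on $\partial N\times\{1/2\}$ is simply the $S^1$--factor and generates $H_1$. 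Any self-diffeomorphism of $S^1\times S^2$ acts as $\pm 1$ on $H_1$, so for $p_j>1$ no such $\phi$ exists. Equivalently, $M$ has $H_1(M)\cong\Z\oplus\Z/p_j\Z$ and so is not a solid torus; thus $\partial N(F_W)$ does not bound solid tori on both sides in $\partial V$, while $\partial N\times\{1/2\}$ does in $\partial(N\times[0,1])$.

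The paper avoids this entirely by thickening a neighborhood of the \emph{exceptional} fiber rather than a regular one. If $N_1\subset Y$ is a fibered neighborhood of the singular fiber of type $p_j/q_j$, then $\partial(N_1\times[1/4,3/4])$ already \emph{is} $W$ with its Seifert structure: the two copies $N_1\times\{1/4\}$ and $N_1\times\{3/4\}$ each contribute one exceptional fiber of multiplicity $p_j$, and the fibration agrees with that of $Y$ on the overlap. One then removes a regular-fiber neighborhood $N_2\subset N_1$ from the bottom face and glues the resulting $(\partial M)\setminus N_2$ to $(Y\setminus N_2)\times\{0\}$ along a cylinder $\partial N_2\times[0,1/4]$. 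No framing computation is needed because the Seifert structures match by construction. If you want to salvage your write-up, replace ``$N$ = neighborhood of a regular fiber'' by ``$N_1$ = neighborhood of the exceptional fiber of type $p_j/q_j$'' and realize $W$ directly as $\partial(N_1\times I)$ rather than via the abstract identification $V\cong N\times I$.
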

\begin{proof}
Let $Y=S^2(e;\frac{p_1}{q_1}, \dots, \frac{p_k}{q_k})$ and $Y'=S^2(e;\frac{p_1}{q_1}, \dots, \frac{p_k}{q_k}, -\frac{p_k}{q_k},\frac{p_k}{q_k})$ a space obtained from $Y$ by expansion.
  We will explicitly find a subset of $Y\times[0,1]$ which is homeomorphic to $Y'$. Let $N_1\subset Y$ be a Seifert fibered neighbourhood of the exceptional fiber corresponding to $p_k/q_k$, that is, a set homeomorphic to $S^1 \times B^2$ whose boundary is a union of regular fibers. Consider the set $M=N_1 \times [\frac{1}{4}, \frac{3}{4}]$. The boundary $\partial M$ is homeomorphic to $S^1 \times S^2$ and it naturally inherits a Seifert fibered structure of the form $\partial M =S^2(0; -\frac{p_k}{q_k}, \frac{p_k}{q_k})$. On $N_1 \times \{\frac{1}{4}\}$ and $N_1 \times \{\frac{3}{4}\}$ this structure is a translate of the one on $N_1$, giving the two exceptional fibers, and is the obvious product structure on $\partial N_1 \times [\frac{1}{4}, \frac{3}{4}]$. Now let $N_2 \subseteq N_1$ be a Seifert fibered neighbourhood of a regular fiber. 
  We take $X$ to be following the subset of $Y\times [0,1]$:
  \[
  X=(Y\setminus \inter N_2) \times \{0\} \cup \partial N_2 \times \left[0, \frac{1}{4}\right] \cup \left(\partial M \setminus \inter N_2 \right) \times \left\{\frac{1}{4}\right\}
  \]
  As a manifold, $X$ is obtained by taking $Y$ and $M$, deleting open fibered neighbourhoods of regular fibers in both and gluing the two resulting manifolds along their boundaries so that the boundary fibers match up. From this description $X$ is clearly homeomorphic to $Y'$. Thus by smoothing the corners of $X$ we can obtain a smooth embedding $\iota$ of $Y'$ into $Y\times [0,1]$.
  
  To see that the induced map $\iota_*$ on homology is surjective, observe that any class in $H_1(Y\times \{0\})$ can be represented by a loop $\gamma$. Moreover since $N_2$ is a solid torus, $\gamma$ is homotopic inside $Y\times \{0\}$ to a curve $(Y\setminus \inter N_2) \times \{0\}$. Thus the loop $\gamma$ is contained in $X$ and hence is in the image of $\iota_*$
\end{proof}

 \subsection{Proof of Theorem~\ref{thm:embedding}}

We have now established all the ingredients necessary to prove our main result on embedding Seifert fibered spaces.
\thmSFembedding*
\begin{proof}
Proposition~\ref{prop:obstruction_summary} gives the implication $\eqref{it:embedding} \Rightarrow \eqref{it:explicit_Description}$. The implication $\eqref{it:explicit_Description} \Rightarrow \eqref{it:expansion}$ follows from Lemma~\ref{lem:S1xB3construction} and the definition of expansion. Together Lemma~\ref{lem:doubling_ZHS1xB3} and Lemma~\ref{lem:expansion} finish the proof by establishing the implication $\eqref{it:expansion} \Rightarrow \eqref{it:embedding}$.
\end{proof}

\section{Further questions}
%\note{Write this subsection.}
%\begin{itemize}
%    \item Can the coprime condition be strengthened?
%    \item What about mutants?
%    \item Is there 2-component link that is weakly doubly slice with only quasi-orientation? (Possible candidates are the 0-cables)
%    \item Embedding in $S^1 \times S^3$ versus homology $S^1 \times S^3$?
%\end{itemize}
Finally, we discuss a few questions that naturally arise from the results in this paper. 
Firstly, Theorem~\ref{thm:embedding} is the best possible result that can be obtained via our Donaldson's theorem obstruction, since this technique can only obstruct embeddings into $\Z H_*(S^1 \times S^3)$s. This naturally leads one to wonder about the difference between embedding into $\Z H_*(S^1 \times S^3)$s and $S^1\times S^3$. The following question seems like a natural first step in understanding this distinction. % as an obstruction achieves the best possible result, as it is an obstruction for branched double covers embedding in an integer homology $S^1 \times S^3$ with surjection on $H_1$, and the remaining cases in our theorems all have double covers that admit such embeddings. Strong double slicings have covers that admit such embeddings into $S^1 \times S^3$. Only a fraction of the remaining cases are known to have double covers that admit $S^1 \times S^3$ embeddings, and we expect that the rest in fact do not. To rule out these remaining cases would take a slightly different approach, using invariants that can detect the difference between embeddings into homology $S^1 \times S^3$ and actual $S^1 \times S^3$. Our examples form natural candidates for the difference between these two properties:

\begin{question}\label{question:homembedding}
Does there exist a 3-manifold $Y$ that embeds into a $\Z H_*(S^1 \times S^3)$ by a map that induces a surjection on $H_1$ but $Y$ does not admit such an embedding into $S^1\times S^3$?
\end{question}
It seems probable that many of the Seifert fibered spaces from Theorem~\ref{thm:embedding} which are known to embed into $\Z H_*(S^1 \times S^3)$ but do not have any known embedding into $S^1\times S^3$ would be good candidates for an affirmative answer to Question~\ref{question:homembedding}. Relating to this it is natural to wonder about the strong double slicing of the Montesinos links which are unobstructed by Theorem~\ref{thm:montesinos}.

\begin{question}\label{question:remainingcases}
Let $L$ be the Montesinos link
\[
L=\M \left(0; \frac{p_1}{q_1}, \dots,\frac{p_k}{q_k}, -\frac{p_k}{q_k}, \dots, -\frac{p_1}{q_1}  \right),
\]
where at most one of the $p_i$ is even and for all $i$ we have  $|p_i/q_i|\geq 2$ and  for all $i,j$ we have $\gcd(p_i,p_j) = 1$ or  $\frac{p_i}{q_i} = \pm \frac{p_j}{q_j}$. Are $L$ and its mutants strongly doubly slice?
\end{question}
Even if one could fully understand which Seifert fibered spaces embed in $S^1 \times S^3$, we expect that there are many Montesinos links whose double covers embed in $S^1 \times S^3$, but do not seem to be strongly doubly slice. Potentially one could obtain further progress by obstructing embeddings of higher branched covers into connect sums of $S^1\times S^3$. As precedent for this we note that obstructions to slicing of pretzel links have been obtained by studying properties of the triple branched cover \cite{Acetoandco}.

Finally, we note that our techniques are not very sensitive to questions of orientation. Although we were able to exhibit a three component link which is weakly doubly slice with only one quasi-orientation, exhibiting a two component link with this property appears to be considerably harder.
\begin{question}\label{ques:weak-slicing}
Is there a 2-component link that is weakly doubly slice with one quasi-orientation but not the other?
\end{question}
It seems highly likely that Question~\ref{ques:weak-slicing} should have an affirmative answer, and we can even propose some candidate examples. Let $L$ be a 0-framed 2-cable of a knot $K$ which is slice, but not doubly slice. One can construct a weak slicing for $L$ as follows. Take a slice disk $D$ for $K$ and take a tubular neighbourhood $\nu D$. Since this tubular neighbourhood is diffeomorphic to $D \times B^2$, we can find a $D \times I$ subbundle of the tubular neighbourhood. The boundary of this $D \times I$ subbundle is by construction an unknotted $S^2$ and after perturbing slightly it will intersect the equatorial $S^3$ in a copy of $L$. The quasi-orientation induced on $L$ by this slicing will be the one such that the 2-components of $L$ cobound an annulus in $S^3$. However, there does not seem to be any reason to expect that $L$ is weakly doubly slice with the other quasi-orientation.

%One thing our generating procedures don't allow us to do is produce examples of 2-component weakly doubly slice links with precisely one quasi-orientation. Our example of a weakly doubly slice link with only one quasi-orientation uses linking numbers as an obstruction, which cannot work in the setting of two components. Moreover, there is a natural class of candidates, the 0-cables of slice but not doubly slice knots, which admits one weak double slicing, but does not appear to admit the other. The two quasi-orientations have different Seifert forms, so signature theoretic examples may work in this case.

\bibliography{master}
\bibliographystyle{alpha}
\end{document}